\DeclareMathOperator*{\esssup}{ess\,sup}
\DeclareMathOperator*{\essinf}{ess\,inf}
\newcommand{\HV}{{\mathcal H}_{\beta }}
\newcommand{\HVtilde}{{\mathcal H}_{\tilde \beta}}
\newcommand{\HVminus}{\mathcal{H}_{\beta_-}}
\newcommand{\HVzero}{{\mathcal H}_{0}}
\newcommand{\HVone}{{\mathcal H}_{\beta_1 }}
\newcommand{\HVtwo}{{\mathcal H}_{\beta_2 }}
\newcommand{\CC}{\D{C}}
\newcommand{\D}[1]{\mbox{\rm #1}}
\newcommand{\cal}{\mathcal}
\newcommand{\F}{\mathcal F}
\newcommand{\G}{\mathcal G}
\newcommand{\R}{\mathbb R}
\newcommand{\Lip}{\D{Lip}}
\newcommand{\N}{\mathbb N}
\newcommand{\PP}{\mathbb{P}}
\newcommand{\Z}{\mathbb Z}
\newcommand{\sol}{\mathfrak{S}}
\newcommand{\comp}{\mbox{\scriptsize  $\circ$}}
\newcommand{\eps}{\varepsilon}
\renewcommand{\epsilon}{\varepsilon}
\newcommand{\conv}{\mathrm{conv}}
\newcommand{\cyl}{(0,+\infty)\times\R}
\newcommand{\ccyl}{[0,+\infty)\times\R}
\newcommand{\cTcyl}{[0,T]\times\R}
\newcommand{\e}{\textrm{\rm e}}
\newcommand{\tagliato}{$\kern-5 mm -$}
\newcommand{\tagliat}{$\kern-4 mm -$}
\newcommand{\cchi}{\mbox{\large $\chi$}}
\newcommand{\B}[1]{\mbox{\boldmath $#1$}}
\newcommand{\Ham}{\mathscr{H}(\gamma,\alpha_0,\alpha_1)}
\newtheorem{teorema}{Theorem}[section]
\newtheorem{prop}[teorema]{Proposition}
\newtheorem{lemma}[teorema]{Lemma}
\newtheorem{definition}[teorema]{Definition}
\newtheorem{cor}[teorema]{Corollary}
\newtheorem{guess}[teorema]{Remark}
\newtheorem{example}[teorema]{Example}
\newenvironment{oss}{\begin{guess} \begin{rm}}{\end{rm} \end{guess}}
\newenvironment{examp}{\begin{example} \begin{rm}}{\end{rm} \end{example}}
\numberwithin{equation}{section}
\begin{document}

\title{
  Stochastic homogenization of a class of nonconvex viscous HJ equations in one space dimension
  }

\author{Andrea Davini \and Elena
  Kosygina} \address{Dip. di Matematica, {Sapienza} Universit\`a di
  Roma, P.le Aldo Moro 2, 00185 Roma, Italy}

\email{davini@mat.uniroma1.it}
\address{Department of Mathematics, Baruch College, One Bernard Baruch Way, Box B-630, New York, NY 10010, USA}

\email{elena.kosygina@baruch.cuny.edu}
\keywords{Homogenization, equations in media with random structure, non-convex Hamiltonian, viscous Hamilton-Jacobi equation, random potential.}
\subjclass[2010]{35B27, 60K37, 35D40.}

\begin{abstract}
  We prove homogenization for a class of nonconvex (possibly
    degenerate) viscous Hamilton-Jacobi equations in stationary
  ergodic random environments in one space dimension. The results
  concern Hamiltonians of the form $G(p)+V(x,\omega)$, where the
  nonlinearity $G$ is a minimum of two or more convex functions with
  the same absolute minimum, and the potential $V$ is a bounded
  stationary process satisfying an additional {scaled hill and
    valley condition.  This condition is trivially satisfied in the
    inviscid case, while it is equivalent to the original hill and
    valley condition of A.\,Yilmaz and O.\,Zeitouni \cite{YZ19} in the
    uniformly elliptic case.} Our approach is based on PDE methods and
  does not rely on representation formulas for solutions.  Using only
  comparison with suitably constructed super- and sub- solutions, we
  obtain tight upper and lower bounds for solutions with linear
  initial data $x\mapsto \theta x$.  Another important ingredient is a
  general result of P.\,Cardaliaguet and P.\,E.\,Souganidis
  \cite{CaSo17} which guarantees the existence of sublinear correctors
  for all $\theta$ outside ``flat parts'' of effective Hamiltonians
  associated with the convex functions from which $G$ is built. We
  derive crucial derivative estimates for these correctors which allow
  us to use them as correctors for
  $G$. 
\end{abstract}


\date{\today}
\maketitle
\section{Introduction}
%
%
We are interested in proving a homogenization result as
$\epsilon\to 0^+$ for a viscous Hamilton-Jacobi (HJ) equation of the form
\begin{equation}\label{intro viscous hj}
  \partial_{t }u^\epsilon=\epsilon a\left(\frac{x}{\epsilon},\omega\right) \partial^2_{xx} u ^\epsilon+G(\partial_x u^\epsilon)+  \beta V\left(\frac{x}{\epsilon},\omega\right),  \qquad (t,x)\in (0,+\infty)\times\R,
\end{equation}
where $G:\R\to\R$ belongs to a certain class of continuous, nonconvex
and coercive functions. Dependence on a realization of random
environment $\omega$ enters through the diffusion coefficient
$a(x,\omega)$ and potential $V(x,\omega)$ which are assumed to be
stationary with respect to shifts in $x$ and Lipschitz continuous with
a constant independent of $\omega$. Moreover, we suppose that $a$ and
$V$ take values in $[0,1]$ and with probability 1
\begin{equation}\label{intro 01}
 \essinf_{x\in\R} {V}(x,\omega)=0\qquad\hbox{and}\qquad  \esssup_{x\in\R} {V}(x,\omega)=1.
\end{equation}
Thus, the parameter $\beta\ge 0$ represents the ``magnitude'' of the
potential $V$. For a complete set of conditions on the coefficients
and precise statements of our results, we refer to Section~\ref{sez
  intro main}.
%

We shall say that {\em the equation \eqref{intro viscous hj}
  homogenizes} if there exists a continuous function $\HV(G):\R\to\R$
called {\em effective Hamiltonian} and a set $\Omega_0$ of
probability 1 such that for every $\omega\in\Omega_0$ and every
uniformly continuous function $g$ on $\R$, the solution $u^\eps$ of
\eqref{intro viscous hj} satisfying $u^\epsilon(0,\cdot,\omega)=g$
converges locally uniformly on $\ccyl$ as $\eps\to 0^+$ to the unique
solution $\overline u$ of the (deterministic) {\em effective equation}
\begin{equation}\label{intro eq effective}
\partial_t\overline u+\HV(G)(\partial_x\overline u)=0\qquad\hbox{in $\cyl$}
\end{equation}
satisfying $\overline u(0,\cdot)=g$.
Solutions to all PDEs considered in this paper are understood in the
viscosity sense. We refer the reader to \cite{barles_book, bardi,
  users} for details on viscosity solution theory.

To put our results in a broader context, we shall first briefly
review the existing literature on non-convex homogenization of
viscous HJ equations.

\subsection{Literature review.}\label{backgr}

Equation \eqref{intro viscous hj} belongs to a general class of viscous HJ
equations of the form
\begin{equation}
  \label{intro HJeps}
  \partial_{t}u^\epsilon=\epsilon\,\mathrm{tr}\left(A\left(\frac{x}{\epsilon},\omega\right)D^2_{xx}u\right)+H\left(D_{x}u,\frac{x}{\epsilon},\omega\right),\quad(t,x)\in(0,+\infty)\times\R^d,
\end{equation}
where the non-negative definite diffusion matrix $A(x,\omega)$ and the
Hamiltonian $H(p,x,\omega)$ are stationary under the shifts by
$x\in\R^d$ and satisfy some regularity and growth assumptions.  

For homogenization results concerning viscous HJ equation \eqref{intro HJeps}
with {\em convex} (with respect to $p$) Hamiltonians in
the stationary ergodic setting under various sets of assumptions we
refer the reader to \cite{LS_viscous, KRV, KV, LS_revisited, AS12,
  AT15, AC15, JST} and references therein.

Recently it was shown by counterexamples for
$H(p,x,\omega)=G(p)+V(x,\omega)$, first for inviscid (i.e. with $A\equiv 0$) HJ equations,
\cite{Zil, FS}, and then also for viscous HJ equations with
$A\equiv \text{const}$, \cite{FFZ}, that in two or more space
dimensions a {\em strict local saddle point} of 
$G$ and a specially ``tuned'' potential in a
{\em very slowly mixing\footnote{polynomially mixing of order 1,
    \cite[Section 3.1]{Zil}}} random environment can prevent
homogenization.  It is not known whether the absence of saddle points
and/or fast mixing (or even finite range dependence) conditions on the
environment would allow to get a general homogenization result. To
date, there exist several classes of examples of homogenization for HJ
equations with non-convex Hamiltonians in the stationary ergodic
setting for all $d\ge 1$, \cite{ATY_nonconvex, AS, AC18, CaSo17, FS,
  QTY, Gao19}, but an overall picture is far from being
complete. Among these examples the viscous case is considered only in
\cite{AC18} and \cite[Corollary 3.9]{CaSo17}. Key assumptions in
the last two references which facilitate homogenization are:
\begin{itemize} 
\item [\cite{AC18}:] homogeneity of degree
  $\alpha>1$ of the Hamiltonian with respect to $p$;
\item [\cite{CaSo17}:] homogeneity of degrees 0 and 1 in $p$ of the
  diffusion matrix $A(p,x,\omega)$ and Hamiltonian $H(p,x,\omega)$
  respectively and radial symmetry of the joint law of $(A,H)$.
\end{itemize}
We refer to the original papers for precise statements. 

However, for $d=1$, equations of the form \eqref{intro HJeps} with
$A\equiv 0$ in stationary ergodic environments are known to homogenize
without any additional mixing conditions, \cite{ATY_1d,Gao16}. A
cornerstone tool used in these papers is the homogenization result for
level-set convex Hamiltonians,\cite{AS}. The last result covers all
$d\ge 1$. Its proof crucially uses the assumption that the original
equation is of the first order and does not extend to the viscous
case.

Nevertheless it is hard to imagine that addition of a viscous term
(especially a uniformly elliptic $A$) can turn a homogenizable HJ
equation into a non-homogenizable one (under a standard set of
assumptions). Thus, further attempts are necessary to resolve the
issue even in the one-dimensional case.

For $d=1$, apart from already mentioned works \cite{AC18,CaSo17},
there are other classes of examples of homogenization for viscous HJ,
\cite{DK17, YZ19, KYZ20}. In \cite[Section 4]{DK17} the authors have
shown homogenization of \eqref{intro HJeps} with $H(x,p,\omega)$ which
are ``pinned'' at one or several points on the $p$-axis and convex in
each interval in between. For example, for every $\alpha>1$ the
Hamiltonian $H(p,x,\omega)=|p|^\alpha-c(x,\omega)|p|$ 
is pinned at $p=0$ (i.e.\
$H(0,x,\omega)\equiv
\mathrm{const}$) 
and convex in $p$ on each of the two intervals $(-\infty, 0)$ and
$(0,+\infty)$.
%

Clearly, adding a non-constant potential breaks the pinning
property. In particular, homogenization of equation \eqref{intro
  HJeps}, where $d=1$, $A\equiv \mathrm{const}>0$,
\begin{equation}
  \label{open}
  H(p,x,\omega):=\frac12\,|p|^2-c(x,\omega)|p|+\beta V(x,\omega),\quad
  0<c(x,\omega)\le C,\quad \text{and }\ \beta>0
\end{equation}
remained an open problem even when $c(x,\omega)\equiv c>0$.  The
authors of \cite{YZ19} introduced a novel hill and valley condition on
$V$ (see ($\Lambda$V) in Subsection~\ref{oss weak valley-and-hill})
and were able to handle the case $c(x,\omega)\equiv \mathrm{const}>0$
in the discrete setting of controlled random walks in a random
potential on $\Z$. This work paved out the way for \cite{KYZ20} which
gave a proof of homogenization for \eqref{intro HJeps} with
$A\equiv 1/2$ and $H$ as in \eqref{open} with $c(x,\omega)\equiv c>0$,
retaining the hill and valley condition.
The case when both $c(x,\omega)$ and $V(x,\omega)$
in \eqref{open} are non-constant is still open.

 {While the hill and valley condition clearly excludes the classical
periodic case, it holds true for a large and representative class of
stationary ergodic environments ranging from those with finite range
of dependence or exponentially mixing to very slowly mixing or even
non-mixing. In the realm of stationary ergodic media, periodic as well
as almost-periodic environments constitute a very important but
  also a very special sub-class treatable by methods based on
compactness. Loss of compactness is considered to be one of the main
challenges in dealing with general stationary ergodic media. From this
point of view, stationary ergodic potentials which satisfy the hill
and valley condition can be considered typical, as we argue in
Appendix~\ref{hv} and to which we refer for further discussion
and examples. It would certainly be desirable to drop this condition
altogether but, given a relatively slow progress in the viscous case
in comparison to the inviscid one, the hill and valley condition,
  a relaxed version of which we retain in this paper, allows us
  to move forward without imposing any mixing conditions on the
  environment (in contrast with the widely accepted in the literature
  finite range dependence case).}

\subsection{Discussion of the main results.} The current paper
presents new results on homogenization of \eqref{intro viscous hj}
with non-convex $G$ which considerably extend those in \cite{
  KYZ20}. Moreover, it gives a much simpler proof which does not rely
on Hopf-Cole transformation or stochastic control representation of
solutions and is based solely on PDE techniques.   {We also replaced the
{\em hill and valley condition} ($\Lambda$V) on the
    potential $V$ (see Appendix~\ref{hv}) imposed in \cite{KYZ20}
  with a weaker {\em scaled hill and valley condition} (V2) (see
  Section \ref{sez intro main}). The two conditions are equivalent in
the uniformly elliptic case, i.e.\ when the diffusion coefficient
  $a(x,\omega)$ is bounded away from $0$, while in the inviscid case
(V2) reduces to \eqref{intro 01} and, thus, does not add any
  additional restrictions. We refer to Subsection~\ref{oss weak
    valley-and-hill} for further details.}


Let us recall that \cite{KYZ20} considered the equation \eqref{intro
  viscous hj} where $a\equiv 1/2$ and
\begin{equation}
  \label{quad}
  G(p)=(G^+\wedge G^-)(p)=\frac12\,|p|^{2}-c|p|=\min\left\{\frac12\,|p|^{2}-cp,
  \frac12\,|p|^{2}+cp\right\}
\end{equation}
assuming that the potential $V$ is sufficiently regular, satisfies
\eqref{intro 01} and the already mentioned hill and valley
condition. Theorem \ref{teo main 1} of our paper (see Section~\ref{sez
  intro main}) establishes homogenization for \eqref{intro viscous hj}
with a (possibly degenerate) Lipschitz continuous diffusion
coefficient $a:\R\times\Omega\to [0,1]$ and $G=G^+\wedge G^-$, where
$G^\pm$ are convex and coercive functions with $\min G^+=\min G^-$
satisfying fairly general assumptions. Theorem \ref{teo main 2}
extends this result to $G$ which is the minimum of any finite number
of such functions as long as all of them have the same absolute
minimum.  {The assumptions on $V$ are essentially the same as in
  \cite{KYZ20} except that the hill and valley condition ($\Lambda$V) is
  replaced with (V2).}


Even though our general strategy is analogous to that of \cite{KYZ20},
the technical realization is different and includes significant
shortcuts. Just as in \cite{DK17, KYZ20}, an application of \cite[Lemma 4.1]{DK17} reduces the proof of homogenization to showing that 
for every $\theta\in\R$
\begin{eqnarray}\label{intro hom}
\HV^L(G) (\theta):=\liminf_{\eps\to 0^+}\ {u^\eps_\theta(1,0,\omega)}
= 
\limsup_{\eps\to 0^+}\ {u^\eps_\theta(1,0,\omega)}
=:\HV^U({G})(\theta) \quad \PP\text{-a.s.,}
\end{eqnarray}
where $u^\epsilon_\theta$ is the solution of \eqref{intro viscous hj}
with initial condition $u^\epsilon_\theta(0,x,\omega)=\theta x$.  As
in \cite{KYZ20}, we first establish tight upper and lower bounds for
the deterministic functions $\HV^L(G), \HV^U(G)$ defined above.
This is obtained by constructing suitable sub- and super- solutions
for equation \eqref{intro viscous hj} and by comparing them with the
solutions $u^\eps_\theta$, where we only exploit well known comparison
principles and Lipschitz estimates for solutions of \eqref{intro
  viscous hj}.  The proof does not depend on explicit formulas and
does not involve stochastic analysis. It is technically much
simpler than that in \cite{KYZ20}.

The proof of \eqref{intro hom} for $\theta$ outside the intervals
where the effective Hamiltonian is constant depends on construction of
sublinear correctors associated with $G^\pm$ and on establishing
suitable gradient bounds for these correctors, which allow us to use
them as correctors associated with $G$.  In \cite{KYZ20}, such
properties were established by direct computation since, due to the
special form of the nonlinearity in \eqref{quad}, the authors were
able to represent the correctors via the Feynman-Kac formula. In our
more general setting, the existence of sublinear correctors for
$G^\pm$ follows from a recent result of P.\,Cardaliaguet and
P.\,E.\,Souganidis \cite{CaSo17}, while the bounds on their
derivatives are consequence of suitable comparison principles for the
associated viscous HJ equation that we prove in the Appendix~\ref{pderes}.  The
construction in \cite{CaSo17} provides sublinear correctors 
  which, in general, are not expected to have stationary gradient.
Nevertheless, this is true here and it is due to the fact that
sublinear solutions of the corresponding viscous HJ equation are
unique up to additive constants, as we show in the Appendix~\ref{pderes}. This
remark is included in the statement of Proposition \ref{prop CS}, even
though this stationarity property is not used in our proof of 
the homogenization result. 

Our second result, Theorem~\ref{teo main 2}, extends this
homogenization result to $G$ which is the minimum of three or more
convex functions with same absolute minimum.  The argument is new. It
is based on the crucial remark that if $G$ is the minimum of two convex functions with same absolute minimum, then homogenization commutes with convexification, see Section \ref{sez teo main 2}.

\subsection{Outline of the paper.} Precise conditions and statements
of the main results, Theorem~\ref{teo main 1} and Theorem~\ref{teo
  main 2}, are given in Section~\ref{sez intro
  main}. Section~\ref{prelim} presents several basic facts which are
used throughout the paper. Upper and lower bounds on the effective
Hamiltonian are derived in Section~\ref{bounds}. Section~\ref{sez
  existence of correctors} is devoted to construction of sublinear
correctors and derivative estimates. The proofs of the two main
theorems are given in Sections~\ref{sez teo main 1} and \ref{sez teo
  main 2}. The necessary PDE results are collected in the
Appendix~\ref{pderes}. Appendix B discusses the original and
  scaled hill and valley conditions in more detail and shows that they
  are satisfied for a wide range of typical stationary ergodic
  environments.

\begin{oss}
  Below we sometimes refer to ``known results in stationary ergodic
  homogenization''. The results we have in mind are for convex
  Hamiltonians. They are contained in many papers cited at the
  beginning of Section~\ref{backgr}. However, it is probably most
  convenient to refer to \cite{AT15} {if
    necessary}, as all our assumptions are satisfied in the setting of
  \cite{AT15}.
\end{oss}

\noindent{\bf Acknowledgements.} 
The definition of the scaled hill and valley condition, as it appears
in this paper, originates from a similar one proposed by Atilla Yilmaz
in discussions with the authors in summer of 2020.

  The authors were partially supported by a Collaboration Grant for
  Mathematicians \#523625 from the Simons Foundation. The second
  author thanks ``Visiting Professor Programme'' of Sapienza
  University of Rome for funding and, in particular, the Department of
  Mathematics for inclusiveness and wonderful hospitality. She
  gratefully acknowledges the support of the Fields Institute through
  Fields Research Fellowship and thanks its staff for stimulating
  research environment.

\section{Main results}\label{sez intro main}
Let $\Omega$ be a Polish space, ${\cal F}$ be the $\sigma$-algebra of
Borel subsets of $\Omega$, and $\PP$ be a complete probability measure
on $(\Omega,\F)$. We shall denote by ${\cal B}$ the Borel
$\sigma$-algebra on $\R$ and equip the product space $\R\times \Omega$
with the product $\sigma$-algebra ${\cal B}\otimes {\cal
  F}$. 

We assume that $\PP$ is invariant under the action of a one-parameter
group $(\tau_x)_{x\in\R}$ of transformations
$\tau_x:\Omega\to\Omega$. More precisely, we suppose that the mapping
$(x,\omega)\mapsto \tau_x\omega$ from $\R\times \Omega$ to $\Omega$ is
measurable, $\tau_0=id$, $\tau_{x+y}=\tau_x\comp\tau_y$ for all
$x,y\in\R$, and $\PP\big(\tau_x (E)\big)=\PP(E)$ for every $E\in\F$ and $x\in\R$.
We also require that the action by $(\tau_x)_{x\in\R}$ is {\em
  ergodic,} i.e.\ that any measurable function $f:\Omega\to\R$ such that
$f(\tau_x\omega)=f(\omega)$ a.s.\ in $\Omega$ for every fixed
$x\in{\R}$ is a.s.\ constant.
%

A random process $f:\R\times \Omega\to \R$ is said to be {\em
  stationary with respect to the shifts $(\tau_x)_{x\in\R}$} if  $f(x+y,\omega)=f(x,\tau_y\omega)$ for all $x,y\in\R$ and $\omega\in\Omega$.

Let us consider the unscaled version of \eqref{intro viscous hj} (i.e.\ with $\epsilon=1$) 
\begin{equation}\label{eq general HJ}
\partial_{t }u=a(x,\omega) \partial^2_{xx} u +G(\partial_x u)+  \beta V(x,\omega)  \qquad \hbox{in $(0,+\infty)\times\R$},
\end{equation}
where $a, V:\R\times\Omega\to [0,1]$ are continuous stationary random processes and $V$ satisfies \eqref{intro 01}. 
%
%
We shall also assume that for some $\kappa\in(0,+\infty)$,
\begin{itemize}
\item[(A)]  $\sqrt{a(\cdot,\omega)}:\R\to [0,1]$\quad is $\kappa$--Lipschitz continuous for all $\omega\in\Omega$;\smallskip
\item[(V1)] $V(\cdot,\omega):\R\to [0,1]$\quad is $\kappa$--Lipschitz
  continuous for all $\omega\in\Omega$.
\end{itemize}
In addition, we shall suppose that $V$ under $\PP$ satisfies the {\em scaled hill} (respectively, {\em scaled valley}) {\em condition}:
\begin{itemize}
\item[(V2)]  for every $h\in (0,1)$ and $y>0$ there exists a set $\Omega(h,y)$ of probability 1 such that, for every $\omega\in \Omega(h,y)$, 
there exists $\ell_1<\ell_2$ in $\R$ and $\delta\in (0,1)$ such that 
\begin{itemize}
\item{(a)}\quad$\displaystyle \int_{\ell_1}^{\ell_2} \frac{1}{a(x,\omega)\vee \delta}\,dx = 2y$;
\end{itemize}
and 
\begin{itemize}
\item{(h)}\ {\bf (hill)} \quad $V(\cdot,\omega)\geqslant h\quad\hbox{on $[\ell_1,\ell_2]$}$;\medskip
\end{itemize}
(respectively,\smallskip
\begin{itemize}
\item{(v)}\ {\bf (valley)} \quad $V(\cdot,\omega)\leqslant h\quad\hbox{on $[\ell_1,\ell_2]$}$.)
\end{itemize}
\end{itemize}
Following \cite{KYZ20}, an interval $I$ will be called $h$-hill (resp. $h$-valley) if $V(x,\omega)\geqslant h$ (resp. $V(x,\omega)\leqslant h$) for every $x\in I$.

Next, we introduce the family $\Ham$ of continuous functions
$G:\R\to\R$ satisfying the following conditions, for fixed constants
$\alpha_0,\alpha_1>0$ and $\gamma>1$:
\begin{itemize}
\item[(G1)] \quad $\alpha_0|p|^\gamma-1/\alpha_0\leqslant G(p)\leqslant\alpha_1(|p|^\gamma+1)$\qquad for all $x,p\in\R$;\medskip
\item[(G2)] \quad
  $|G(p)-G(q)|\leqslant\alpha_1\left(|p|+|q|+1\right)^{\gamma-1}|p-q|$  \qquad for all $p,q\in\R$.\medskip
\end{itemize}
The above assumptions guarantee
well posedness in $\D{UC}(\ccyl)$ of the Cauchy problem for
  parabolic equation \eqref{eq general HJ} as well
  as suitable Lipschitz estimates for solutions of  \eqref{eq general HJ} with linear initial data, see
Theorem \ref{teo parabolic eq} and Proposition \ref{prop Lip
  estimates} in Section \ref{sez appendix stationary}. They will be
also used to show that condition (H) in \cite{CaSo17} is fulfilled,
see the proof of Proposition \ref{prop CS}.  We stress
that 
our results hold (with the same proofs) under any other set of
assumptions apt to  ensure the same kind of PDE results. 

Since functions from $\Ham$ are bounded from below in view of (G1), in the sequel without loss of generality we shall always assume 
that $G$ is non-negative. 

As stated  in the introduction, we shall prove homogenization for
the rescaled version \eqref{intro viscous hj} of equation \eqref{eq
  general HJ} for a class of nonconvex functions $G$ in $\Ham$.  With
a slight abuse of terminology, in the sequel we shall say that equation
\eqref{eq general HJ} homogenizes if the rescaled equation
\eqref{intro viscous hj} homogenizes.


For given $c_+\geqslant c_-$ in $\R$, let $G^+,G^-:\R\to [0,+\infty)$ be {\em convex} functions from $\Ham$  with $G^+(c_+)=G^-(c_-)=0$. 
Let us furthermore assume that there exists 
$\widehat{p}\in [c_-,c_+]$ such that 
\begin{equation*}\label{eq F_c}
(G^-\wedge G^+)(p)=G^-(p)\quad \hbox{if $p<\widehat{p}$},
\qquad
(G^-\wedge G^+)(p)=
G^+(p)\quad \hbox{if $p\geqslant \widehat{p}$.}
\end{equation*}
%
%
By well-known results in stationary ergodic
homogenization, 
the equation \eqref{eq general HJ} with $G:=G^{\pm}$ homogenizes and
the effective Hamiltonian $\HV(G^\pm) $ is convex.
We shall prove that equation \eqref{eq general HJ} homogenizes for $G:=G^-\wedge G^+$ as well. The precise statement is given in the next theorem.

\begin{teorema}\label{teo main 1}
  Let $a,V:\R\times\Omega\to [0,1]$ be continuous
    stationary processes satisfying (A), \eqref{intro 01}, (V1)--(V2) and
  $G^+,G^-:\R\to [0,+\infty)$ be convex functions as above. Then the
  viscous HJ equation \eqref{eq general HJ} with $G:=G^-\wedge G^+$
  homogenizes and the effective Hamiltonian $\HV(G^-\wedge G^+)$ can
  be characterized as follows:
\begin{enumerate}[(a)]
\item (Strong potential) if $\beta\geqslant (G^-\wedge G^+)(\widehat{p})$, then 
\begin{equation*}  
\HV (G^-\wedge G^+)(\theta)=
  \begin{cases}
    \HV (G^+)(\theta)&\text{if}\quad \theta> c_+\\
    \beta &\text{if }\quad c_-\leqslant \theta\leqslant c_+\\ 
    \HV (G^-)(\theta)&\text{if}\quad \theta< c_-;
  \end{cases}
\end{equation*}
\smallskip
\item (Weak potential) if $\beta<(G^-\wedge G^+)(\widehat{p})$, then  
\begin{equation*}  
{\HV (G^-\wedge G^+)}(\theta)= 
  \begin{cases}
    \HV (G^+)(\theta)&\text{if}\quad \theta>{\theta} _+\\
    (G^-\wedge G^+)(\widehat{p})&\text{if }\quad{\theta} _{-}\leqslant \theta\leqslant{\theta} _+\\ 
    \HV (G^-)(\theta)&\text{if}\quad \theta< {{\theta} _{-}},
  \end{cases}
\end{equation*}
where ${\theta}_+$ (resp.\ ${\theta}_-$) is the unique
solution in $[\widehat{p}, c_+]$ (resp.\ $[c_-,\widehat{p}]$) of the
equation \[
\HV (G^+)(\theta) =(G^-\wedge G^+)(\widehat{p})\qquad (\hbox{resp. \ \ $\HV (G^-)(\theta) =(G^-\wedge G^+)(\widehat{p})$}\ ).
\]
\end{enumerate}
\end{teorema}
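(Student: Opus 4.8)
The plan is to reduce the claim, via \cite[Lemma 4.1]{DK17}, to establishing \eqref{intro hom}, i.e.\ the almost sure equality of $\HV^L(G^-\wedge G^+)(\theta)$ and $\HV^U(G^-\wedge G^+)(\theta)$ together with the stated formula, separately for each $\theta\in\R$. The argument naturally splits into two regimes according to where $\theta$ lies. First I would treat the ``plateau'' regime: in case (a) this is $c_-\le\theta\le c_+$, in case (b) it is $\theta_-\le\theta\le\theta_+$. On this range I claim the effective Hamiltonian equals the constant $\beta$ (case (a)) or $(G^-\wedge G^+)(\widehat p)$ (case (b)). The upper bound on $\HV^U$ comes from comparing $u^\eps_\theta$ with an explicit spatially constant supersolution of the form $(t,x)\mapsto \theta x + Ct$ for the appropriate constant $C$: since $G(\theta)\le G(\widehat p)=(G^-\wedge G^+)(\widehat p)$ for $\theta$ between $c_-$ and $c_+$, and $\beta V\le\beta$, the function with $C=\beta\vee(G^-\wedge G^+)(\widehat p)$ (adjusted per case) is a supersolution, and comparison plus the rescaling gives the bound on $u^\eps_\theta(1,0,\omega)$. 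The matching lower bound on $\HV^L$ is the delicate part and is exactly where condition (V2) enters: in the strong-potential case one uses the scaled hill condition to find, for arbitrary $h<1$ and prescribed length $y$, an interval $[\ell_1,\ell_2]$ with $\int_{\ell_1}^{\ell_2}(a\vee\delta)^{-1}\,dx=2y$ on which $V\ge h$, and one builds a subsolution supported near that interval which, after rescaling, forces $u^\eps_\theta(1,0,\omega)\ge \beta h - o(1)$; letting $h\to 1$ yields $\HV^L\ge\beta$. In the weak-potential case the same mechanism, combined with the convex homogenization for $G^\pm$, produces the plateau value $(G^-\wedge G^+)(\widehat p)$. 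I would cite the sub/super-solution constructions and the Lipschitz estimates of Section~\ref{bounds} and the Appendix for these comparisons rather than redo them.

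Next I would handle the ``convex'' regime, say $\theta>c_+$ (in case (a)) or $\theta>\theta_+$ (in case (b)); the case of small $\theta$ is symmetric. Here the point is that for such $\theta$ the solution $u^\eps_\theta$ of the equation with $G=G^-\wedge G^+$ should homogenize to $\HV(G^+)(\theta)$. The strategy is to use the sublinear correctors associated with $G^+$ furnished by Proposition~\ref{prop CS} (the Cardaliaguet--Souganidis construction), which exist for all $\theta$ outside the flat part of $\HV(G^+)$, together with the derivative estimates proved in Appendix~\ref{pderes}. The key mechanism is that the gradient bounds on the $G^+$-corrector at slope $\theta$ keep the gradient in the region $p\ge\widehat p$ where $G^-\wedge G^+$ coincides with $G^+$; hence the corrector for $G^+$ is automatically a corrector for $G^-\wedge G^+$ at that slope, and the standard perturbed test function / comparison argument then identifies both $\HV^L$ and $\HV^U$ with $\HV(G^+)(\theta)$. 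One must separately check the threshold values $\theta=c_+$ resp.\ $\theta=\theta_\pm$ and the boundary of the flat part of $\HV(G^+)$ by continuity of $\theta\mapsto u^\eps_\theta(1,0,\omega)$ in $\theta$ (uniform Lipschitz in $\theta$ from Proposition~\ref{prop Lip estimates}) and monotonicity, so that the formula extends to the closed intervals as stated.

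Finally, one verifies consistency of the two pieces: the plateau value matches the convex branches at the endpoints. In case (a) this requires $\HV(G^+)(c_+)=\beta$ when $\beta\ge(G^-\wedge G^+)(\widehat p)$, which follows because $G^+(c_+)=0$ so $\HV(G^+)(c_+)=\beta\,\esssup V=\beta$ by the convex homogenization formula and \eqref{intro 01} (and the same with $G^-,c_-$); in case (b) it is the definition of $\theta_\pm$ as the solution of $\HV(G^+)(\theta)=(G^-\wedge G^+)(\widehat p)$ on $[\widehat p,c_+]$, whose existence and uniqueness follow from monotonicity and continuity of the convex effective Hamiltonian together with $\HV(G^+)(\widehat p)\le G^+(\widehat p)=(G^-\wedge G^+)(\widehat p)$ and $\HV(G^+)(c_+)=\beta<(G^-\wedge G^+)(\widehat p)$. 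Collecting the bounds over a countable dense set of $\theta$ and using continuity in $\theta$ yields a single full-probability event on which \eqref{intro hom} holds for all $\theta$, which by \cite[Lemma 4.1]{DK17} gives homogenization with the stated effective Hamiltonian.

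\textbf{Main obstacle.} The hard part is the lower bound on $\HV^L$ in the plateau regime: one must construct a genuine viscosity subsolution, out of the hill/valley interval provided by (V2), that propagates the value $\beta$ (resp.\ $(G^-\wedge G^+)(\widehat p)$) down to $u^\eps_\theta(1,0,\omega)$ after rescaling, and this is precisely where the interplay between the degenerate diffusion $a$, the weight $(a\vee\delta)^{-1}$ in condition (V2)(a), and the nonconvexity of $G$ at the pinning slope $\widehat p$ must be managed carefully — controlling the length scale $y$ and the cutoff $\delta$ so that the subsolution's boundary-layer losses vanish as $\eps\to0$ and then $h\to1$.
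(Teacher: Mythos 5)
Your overall architecture is close to the paper's: reduce via \cite[Lemma 4.1]{DK17} to controlling $\HV^L(G_c)(\theta)$ and $\HV^U(G_c)(\theta)$ for each fixed $\theta$, treat the plateau separately from the ``convex'' branches, use the scaled hill condition for the lower bound $\HV^L \ge \beta$, and use the Cardaliaguet--Souganidis correctors for $G^\pm$ with derivative estimates to conclude on the convex branches. The corrector part of your sketch is essentially Theorem~\ref{teo homogenization via correctors}, and the endpoint consistency checks you perform match Proposition~\ref{prop Lambda}. However, there are two genuine gaps in the plateau regime.

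First, your upper bound argument fails. You propose comparing $u_\theta$ with the ansatz $(t,x)\mapsto\theta x+Ct$, $C=\beta\vee G_c(\widehat p)$. For this to be a supersolution you need $C\ge G_c(\theta)+\beta V(x)$ for all $x$, and since $\esssup V=1$ this forces $C\ge G_c(\theta)+\beta$, which is \emph{strictly larger} than $\beta\vee G_c(\widehat p)$ at every plateau point $\theta$ with $G_c(\theta)>0$ (i.e.\ every $\theta\in(-c,c)$). Thus the spatially constant ansatz only yields $\HV^U(G_c)(\theta)\le G_c(\theta)+\beta$, which is not tight. The correct upper bound $\HV^U(G_c)(\theta)\le\max\{\beta,G_c(\widehat p)\}$ (Proposition~\ref{prop upper bound}) requires the scaled \emph{valley} condition (V2)(v): one builds a spatially varying supersolution whose derivative is $\pm c$ outside a valley interval (where $G_c(\pm c)=0$, so one only pays $\beta V\le\beta$) and transitions through that interval at bounded extra cost $G_c(\widehat p)+\beta h+O(y^{-1})$, with $h\to 0$, $y\to\infty$ at the end. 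Your sketch never invokes the valley half of (V2), and without it the upper bound cannot be obtained.

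Second, your lower bound in the weak-potential plateau is not established by your argument. For $\theta$ strictly between $\theta_-$ and $\theta_+$, you assert that ``the same mechanism'' (the scaled hill construction) produces the plateau value $G_c(\widehat p)$, but that mechanism only gives $\HV^L(G_c)(\theta)\ge\beta$ (Proposition~\ref{prop lower bound}), and in the weak case $\beta<G_c(\widehat p)$, so there is a genuine gap. The paper closes it with an entirely different idea: by Proposition~\ref{prop Lambda}(iii), the map $\tilde\beta\mapsto\HV^L_{\tilde\beta}(G_c)(\theta)$ is continuous and nondecreasing, so one may choose $\beta_-\in[0,\beta)$ such that $\HV_{\beta_-}(G^-)(\theta)=G_c(\widehat p)$; at this reduced potential strength the corrector argument of Theorem~\ref{teo homogenization via correctors} applies (it is then in the range $\beta_-<\HV_{\beta_-}(G^-)(\theta)\le G_c(\widehat p)$), giving $\HV^L_{\beta_-}(G_c)(\theta)=G_c(\widehat p)$, and monotonicity in $\beta$ promotes this to $\HV^L_{\beta}(G_c)(\theta)\ge G_c(\widehat p)$. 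This ``lower the potential strength, homogenize there, then use monotonicity'' step is a necessary ingredient that your sketch is missing.
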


\begin{oss}
  As we shall see, $\HV(G^{\pm})\geqslant \beta$ on $\R$ and
  $\HV(G^{-})(c_-)=\HV(G^{+})(c_+)=\beta$, see Proposition \ref{prop
    Lambda}. Hence, item (a) above amounts to saying that
  $\HV (G^-\wedge G^+)$ is the lower convex envelope of the functions
  $\HV (G^+)$ and $\HV (G^-)$.  ``Convexification'' of the effective
  Hamiltonian in the strong potential case has been already
  observed in the non-viscous case, see \cite{ATY_nonconvex, ATY_1d,
    QTY}.
\end{oss}

Our second  result generalizes Theorem~\ref{teo main 1} to Hamiltonians
which can be represented as a minimum of more than two convex
Hamiltonians. More precisely, let $n\in\N$ with $n\geq 2$ and
$G_0,G_1,\dots,G_n\in{\cal H}(\gamma,\alpha_0,\alpha_1)$ be convex
non-negative functions such that
$G_0(c_0)=G_1(c_1)=\dots=G_n(c_n)=0$
for some $c_0<c_1<\dots<c_n$ and, for each $i\in\{0,1,\dots,n-1\}$,
\[
 (G_i\wedge G_{i+1})(p)=G_i(p)\quad\hbox{if $p< \widehat{p}_{i,i+1}$,}
 \qquad
 (G_i\wedge G_{i+1})(p)=G_{i+1}(p)\quad\hbox{if $p\geq \widehat{p}_{i,i+1}$}
 \]
for some $\widehat{p}_{i,i+1}\in (c_i,c_{i+1})$.

\begin{teorema}\label{teo main 2}
  Let $a,V:\R\times\Omega\to [0,1]$ be continuous stationary
    processes satisfying (A), \eqref{intro 01}, (V1)--(V2), $n\ge 2$, and
  $G_0,G_1,\dots,G_n:\R\to [0,+\infty)$ be convex functions as above. Then the viscous HJ equation \eqref{eq general HJ}
  with $G:=G_0\wedge G_1\wedge\dots\wedge G_n$ homogenizes and the
  effective Hamiltonian $\HV(G_0\wedge G_1\wedge\dots\wedge G_n)$ is
  given by the following formula:
\begin{align}\label{eh0n}
  &\HV(G_0\wedge G_1\wedge\dots\wedge G_n)(\theta)=\min_{i\in\{1,2,\dots,n\}}\HV(G_{i-1}\wedge G_i)(\theta)\\ 
  &\qquad =
  \begin{cases}
    \HV(G_0\wedge G_{1})(\theta),&\text{if }\theta\le c_1;\smallskip\\
    \HV(G_{i-1}\wedge G_i)(\theta),&\text{if }c_{i-1}< \theta\le c_i,\quad i\in\{2,3,\dots,n-1\};\smallskip\\
    \HV(G_{n-1}\wedge G_n)(\theta),&\text{if }\theta> c_{n-1}.
  \end{cases} \label{eh0nl}
\end{align}
\end{teorema}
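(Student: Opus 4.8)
The plan is to argue by induction on the number of wells, the base case ($n=1$, a minimum of two convex functions) being Theorem~\ref{teo main 1}. So let $G=G_0\wedge\cdots\wedge G_n$ with $n\ge 2$, assume the theorem for any smaller number of convex constituents, and set $F_j:=G_{j-1}\wedge G_j$ ($j\in\{1,\dots,n\}$), so that $G=\min_{1\le j\le n}F_j$. As in the proof of Theorem~\ref{teo main 1}, it suffices to establish \eqref{intro hom}; I will do this by showing that, for every $\theta$, with $i=i(\theta)$ the index of the interval of \eqref{eh0nl} containing $\theta$,
\begin{equation}\label{eq:sandwich}
\HV(F_i)(\theta)\ \le\ \HV^L(G)(\theta)\ \le\ \HV^U(G)(\theta)\ \le\ \HV(F_i)(\theta)\qquad\PP\text{-a.s.}
\end{equation}
Once \eqref{eq:sandwich} is known, equality holds throughout, so \eqref{intro hom} holds with $\HV(G)(\theta)=\HV(F_i)(\theta)$; and since $G\le F_j$ pointwise yields $\HV(G)\le\HV(F_j)$ for every $j$ (by the comparison principle, Theorem~\ref{teo parabolic eq}: the solution of \eqref{intro viscous hj} with Hamiltonian $G$ lies below that with Hamiltonian $F_j$, which homogenizes by Theorem~\ref{teo main 1}), we also get $\HV(G)(\theta)=\min_j\HV(F_j)(\theta)$, i.e.\ \eqref{eh0n}--\eqref{eh0nl}. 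The rightmost inequality in \eqref{eq:sandwich} is exactly this comparison with $j=i$; only the leftmost one, $\HV^L(G)(\theta)\ge\HV(F_i)(\theta)$, requires work, and this is where convexification enters.

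Fix $\theta$ and $i=i(\theta)$; assume $2\le i\le n-1$ (the cases $i\in\{1,n\}$ are analogous, with one of the two convex pieces below reducing to a single $G_k$). Put
\[
\psi_-:=\conv\big(G_0\wedge\cdots\wedge G_{i-1}\big),\qquad
\psi_+:=\conv\big(G_i\wedge\cdots\wedge G_n\big),\qquad
\widehat G:=\psi_-\wedge\psi_+ .
\]
Since $\psi_-\le G_0\wedge\cdots\wedge G_{i-1}$ and $\psi_+\le G_i\wedge\cdots\wedge G_n$, we have $\widehat G\le G\,(\le F_i)$; and using the crossover relations together with $c_k<\widehat p_{k,k+1}<c_{k+1}$ one checks that $\psi_\pm$ are convex and coercive with $\min\psi_\pm=0$, that $\psi_-=G_{i-1}$ on $[c_{i-1},+\infty)$ and $\psi_+=G_i$ on $(-\infty,c_i]$, and that $\psi_-\le\psi_+$ on $(-\infty,\widehat p_{i-1,i}]$, $\psi_-\ge\psi_+$ on $[\widehat p_{i-1,i},+\infty)$, with $\psi_-(\widehat p_{i-1,i})=\psi_+(\widehat p_{i-1,i})=G_{i-1}(\widehat p_{i-1,i})$. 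Thus $\widehat G$ is a minimum of two convex functions with the same absolute minimum, crossing at $\widehat p_{i-1,i}$, with the same barrier $G_{i-1}(\widehat p_{i-1,i})$ as $F_i$; after a harmless perturbation making each $\psi_\pm$ have a unique minimizer, Theorem~\ref{teo main 1} applies to $\widehat G$.

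I now claim $\HV(\widehat G)(\theta)=\HV(F_i)(\theta)$ for all $\theta\in[c_{i-1},c_i]$, i.e.\ that flattening every well of $G$ except the one straddling $\widehat p_{i-1,i}$ does not alter the effective Hamiltonian for nearby slopes. By the induction hypothesis (with Theorem~\ref{teo main 1} as base case), the effective Hamiltonian of $G_0\wedge\cdots\wedge G_{i-1}$ agrees with $\HV(G_{i-1})$ on $[c_{i-1},+\infty)$ and that of $G_i\wedge\cdots\wedge G_n$ agrees with $\HV(G_i)$ on $(-\infty,c_i]$; since for a minimum of two convex functions with the same absolute minimum homogenization commutes with convexification — a direct consequence of the explicit formula in Theorem~\ref{teo main 1} — it follows that $\HV(\psi_-)=\HV(G_{i-1})$ on $[c_{i-1},+\infty)$ and $\HV(\psi_+)=\HV(G_i)$ on $(-\infty,c_i]$. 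Feeding these identities into the formula of Theorem~\ref{teo main 1} applied to $\widehat G$ and to $F_i$ — which share the same crossover point, the same barrier, and constituents with identical effective Hamiltonians on the relevant slope ranges — gives the claim. Finally, because $\widehat G\le G$, every viscosity subsolution of \eqref{intro viscous hj} with Hamiltonian $\widehat G$ is one with Hamiltonian $G$, so comparison gives $\HV^L(G)(\theta)\ge\HV^L(\widehat G)(\theta)=\HV(\widehat G)(\theta)=\HV(F_i)(\theta)$, the leftmost inequality in \eqref{eq:sandwich}. As \eqref{eq:sandwich} holds on a full-probability event for each $\theta$, a standard argument — intersecting over a countable dense set of slopes and using the continuity in $\theta$ of $\HV^L(G)$ and $\HV^U(G)$ — yields a single full-probability event on which \eqref{intro hom} holds for all $\theta$.

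The crux is the identity $\HV(\widehat G)=\HV(F_i)$ on $[c_{i-1},c_i]$: the effective Hamiltonian is a global object, so one must genuinely show that for a given slope only the active well matters, which is precisely the commutation of homogenization with convexification in the two-convex case, propagated through the induction. The remaining points — checking the geometry of $\widehat G$ (that it truly is a minimum of two convex functions with a common minimum and lies below $G$, so that the one-sided comparisons are legitimate), and the boundary cases $i\in\{1,n\}$ — are routine.
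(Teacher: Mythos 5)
Your overall route is the same as the paper's: the upper bound in \eqref{eh0n} is the trivial comparison with each $G_{i-1}\wedge G_i$, and the lower bound is obtained by comparing $G_0\wedge\cdots\wedge G_n$ from below with the auxiliary Hamiltonian $\widehat G=\conv(G_0\wedge\cdots\wedge G_{i-1})\wedge\conv(G_i\wedge\cdots\wedge G_n)$ (which equals the paper's $\conv(G_{0,i-1})\wedge\conv(G_{in})$) and then showing $\HV(\widehat G)=\HV(G_{i-1,i})$ on $[c_{i-1},c_i]$. The core ingredient — that homogenization commutes with convexification for a minimum of two convex functions with a common minimum — is exactly Proposition~\ref{commute} of the paper, and you correctly identify it as the crux. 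The induction you set up is, however, unnecessary overhead: one does not need to know $\HV(G_0\wedge\cdots\wedge G_{i-1})$ at all, because $\conv(G_0\wedge\cdots\wedge G_{i-1})=\conv(G_0\wedge G_{i-1})$ (the intermediate $G_j$ do not change the convex envelope), so the two-function commutation applies directly, which is what the paper does.

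The genuine gap is your dismissal of the commutation as ``a direct consequence of the explicit formula in Theorem~\ref{teo main 1}.'' It is not: $G'\!:=\conv(G^+\wedge G^-)$ is a single convex function with a \emph{flat} minimum interval $[c_-,c_+]$, and Theorem~\ref{teo main 1}, applied to the tautological decomposition $G'=G'\wedge G'$, only yields $\HV(G')=\beta$ on $[c_-,c_+]$; it says nothing about how $\HV(G')$ compares to $\HV(G^\pm)$ outside that interval, which is exactly what you need. To make the step rigorous one must either (a) prove the localization lemma the paper isolates as Lemma~\ref{coincide} — that two convex $G$'s agreeing on a half-line through their common minimum have the same effective Hamiltonian on the corresponding slope half-line — which in the paper relies on the existence of correctors and the gradient estimates of Proposition~\ref{prop inequality corrector} (Section~\ref{sez existence of correctors}), or (b) engineer a nontrivial auxiliary decomposition $G'=G^+\wedge\tilde G^-$, where $\tilde G^-$ agrees with $G'$ on $(-\infty,c_+]$ and grows strictly faster than $G^+$ on $[c_+,+\infty)$ (e.g.\ $\tilde G^-:=G'+((\cdot-c_+)_+)^\gamma$), and then apply Theorem~\ref{teo main 1} to that pair with $\widehat p=c_+$. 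Neither of these is ``direct'' and your write-up contains neither. Until this step is supplied, the lower bound — and hence the whole argument — does not close.
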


\begin{oss}
  To avoid repetition, we assume throughout the paper without
    further mention that $a,V:\R\times\Omega\to [0,1]$ are continuous
    stationary processes satisfying (A), \eqref{intro 01}, and (V1).
    Condition (V2) will be imposed only as needed.
\end{oss}

\section{Preliminaries}\label{prelim}
For a given $G\in\Ham$, let us denote by $u_\theta$ the unique Lipschitz solution to \eqref{eq general HJ} with initial condition $u_\theta(0,x)=\theta x$ on $\R$,  
and define the following deterministic quantities, defined almost surely in $\Omega$, see Proposition \ref{prop Lambda} below for the details:
\begin{eqnarray}\label{eq referee}
\HV^U(G) (\theta):=\limsup_{t\to +\infty}\ \frac{u_\theta(t,0,\omega)}{t},\qquad 
\HV^L({G}) (\theta):=\liminf_{t\to +\infty}\ \frac{u_\theta(t,0,\omega)}{t}.
\end{eqnarray}
Observe that, if we denote by $u^\eps_\theta$ the solution of
\eqref{intro viscous hj} with initial condition
$u^\eps_\theta(0,x,\omega)=\theta x$ then we have 
$u^\eps_\theta(t,x,\omega)=\epsilon
u_\theta(t/\epsilon,x/\epsilon,\omega)$. Thus, the above definition of
$\HV^L(G)(\theta)$ and $\HV^U(G)(\theta)$ is consistent with the one
given in \eqref{intro hom}.

In view of \cite[Lemma 4.1]{DK17} and Proposition \ref{prop Lip
  estimates}, in order to prove homogenization it is enough to show
that $\HV^U( G) (\theta)=\HV^L( G) (\theta)$ for every $\theta\in\R$.
In this instance, their common value will be denoted by
$\HV(G)(\theta)$. The function $\HV(G):\R\to\R$ is the effective
Hamiltonian associated to $G$.


The following holds:

\begin{prop}\label{prop Lambda}
  Let $G\in\Ham$. Then the limits
    in \eqref{eq referee} above are almost surely
    constant and, moreover,
  \begin{enumerate}[ (i)]
 \item $\HV^U(G)  (\theta)\geqslant \HV^L(G)(\theta)\geqslant \alpha_0|\theta|^\gamma-1/\alpha_0$\quad for all $\theta\in\R$;\smallskip 
 \item if $V$ satisfies the scaled hill condition (V2)(a),(h) then
   $ \HV^L(G) (\theta)\geqslant \beta$ for every
   $\theta\in\R$;\smallskip
  \item for every $\theta\in\R$, the functions $\beta\mapsto \HV^L(G)  (\theta)$ and  $\beta\mapsto \HV^U(G)  (\theta)$  
  are nondecreasing and Lipschitz continuous with respect to $\beta>0$;\smallskip
\item if $G(0)=0$, then $ \HV^L(G)(0)=\HV^U(G)  (0)=\beta$.
\end{enumerate}
If, in addition, $G$ is convex,
then $\HV^L(G)(\theta)= \HV^U(G)(\theta)=:\HV(G)(\theta)$ for all
$\theta\in\R$, and the function $\HV(G):\R\to\R$ is convex.
\end{prop}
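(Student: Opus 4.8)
\emph{Overview of the approach.} I would establish the five assertions more or less independently, in increasing order of difficulty, using only the comparison principle and the a priori estimates recalled in Section~\ref{sez appendix stationary}.

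\emph{Almost sure constancy, and items (i), (iii), (iv).} For the constancy I would first record the covariance identity $u_\theta(t,x,\tau_y\omega)=u_\theta(t,x+y,\omega)-\theta y$: by stationarity of $a$ and $V$ the right-hand side solves, as a function of $(t,x)$, equation \eqref{eq general HJ} for the environment $\tau_y\omega$ with initial datum $\theta x$, so the identity follows from uniqueness. Evaluating at $x=0$ and using the uniform-in-$t$ spatial Lipschitz estimate of Proposition~\ref{prop Lip estimates} gives $|u_\theta(t,0,\tau_y\omega)-u_\theta(t,0,\omega)|\le C|y|$ with $C$ independent of $t$; dividing by $t$ and letting $t\to\infty$ shows that $\HV^U(G)(\theta)$ and $\HV^L(G)(\theta)$ are $\tau_y$-invariant for every $y$, hence a.s.\ constant by ergodicity. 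Item~(i) is immediate from comparison with the affine subsolution $w(t,x)=(\alpha_0|\theta|^\gamma-1/\alpha_0)\,t+\theta x$, which is a subsolution because $a\,w_{xx}+G(w_x)+\beta V=G(\theta)+\beta V\ge G(\theta)\ge\alpha_0|\theta|^\gamma-1/\alpha_0$ by (G1) and $V\ge0$. For~(iii), monotonicity in $\beta$ is comparison (larger source term), and the Lipschitz bound follows because, for $\beta_1<\beta_2$, the function $u^{(\beta_1)}_\theta+(\beta_2-\beta_1)t$ is a supersolution of the $\beta_2$-equation (use $0\le V\le1$, so $\beta_1V+(\beta_2-\beta_1)\ge\beta_2V$); comparison gives $0\le u^{(\beta_2)}_\theta(t,0)-u^{(\beta_1)}_\theta(t,0)\le(\beta_2-\beta_1)t$, and one divides by $t$ and passes to $\liminf$/$\limsup$. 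Item~(iv) with $G(0)=0$ and $\theta=0$: the constant-in-$x$ function $w(t,x)=\beta t$ is a supersolution (since $G(0)+\beta V=\beta V\le\beta$), giving $\HV^U(G)(0)\le\beta$, while $\HV^L(G)(0)\ge\beta$ is item~(ii) at $\theta=0$.

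\emph{Item (ii): the crux.} This is the only place where condition (V2) is used. Comparing $u_\theta$ with $G(\theta)t+\theta x$ already gives $\HV^L(G)(\theta)\ge G(\theta)$, so the bound is trivial when $G(\theta)\ge\beta$ and only $\theta$ in the bounded set $\{G<\beta\}$ needs an argument. Fix $0<h'<h<1$. The plan is to construct, on a full-measure set of $\omega$, a global viscosity subsolution of \eqref{eq general HJ} of the form $w(t,x)=\beta h'\,t+\theta x+\phi(x)$ with $\phi\le0$ on $\R$; comparison then yields $u_\theta(t,0,\omega)\ge\beta h'\,t+\phi(0)$, hence $\HV^L(G)(\theta)\ge\beta h'$, and letting $h',h\uparrow1$ finishes. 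The profile $\phi$ is assembled piece by piece along the line: away from $h$-hills one keeps $\phi'$ in the ``safe'' set $\{q:G(\theta+q)\ge\beta h'\}$, where the subsolution inequality holds even when $V=0$, with $\phi''$ not too negative so that $\phi$ remains under control there; across an $h$-hill, where $\beta V\ge\beta h$ supplies a slack term, the inequality reduces to $a\,\phi''+G(\theta+\phi')\ge-\beta(h-h')$, which holds for any $\phi''$ bounded suitably from below, so $\phi'$ can be driven across the complementary ``unsafe'' interval. Condition (V2)(a) enters precisely in choosing the effective-length parameter $y$ large enough that the budget $\int_{\ell_1}^{\ell_2}dx/(a\vee\delta)=2y$ available on such a hill is enough to carry $\phi'$ across the unsafe interval in both directions, and ergodicity upgrades (V2) to the existence of infinitely many disjoint $h$-hills, so that $\phi$ can be defined on all of $\R$ and closed up with $\phi\le0$. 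Verifying that the resulting $\phi$ is a genuine viscosity subsolution, notably at the junctions between the pieces, is the one genuinely delicate step of the proposition; everything else is a routine comparison argument.

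\emph{The convex case.} If $G$ is convex, \eqref{eq general HJ} is covered by the classical stationary ergodic homogenization theory for convex viscous Hamilton--Jacobi equations (e.g.\ \cite{AT15}), which yields at once the existence of $\HV(G)(\theta)=\HV^L(G)(\theta)=\HV^U(G)(\theta)$ for all $\theta$ and the convexity of $\theta\mapsto\HV(G)(\theta)$; alternatively, convexity follows from the variational characterization of $\HV(G)$.
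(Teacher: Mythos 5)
Your overall plan is the paper's: the covariance identity plus the uniform spatial Lipschitz bound plus ergodicity give almost-sure constancy; item (i) is comparison with the affine subsolution $\theta x+(\alpha_0|\theta|^\gamma-1/\alpha_0)t$; item (iii) is comparison (monotonicity from $\beta_1V\le\beta_2V$, the $1$-Lipschitz bound from $\beta_1V+(\beta_2-\beta_1)\ge\beta_2V$); item (iv) uses the constant supersolution $\beta t$ together with (ii); and the convex case is outsourced to the known convex stationary ergodic theory. All of that is correct and coincides with the paper's proof.

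Item (ii) is where you underestimate how clean the paper's construction is and where your sketch, as written, has a real gap. Two points.

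First, you invoke ``infinitely many disjoint $h$-hills'' and talk of gluing $\phi$ along the whole line, then flag the junction analysis as ``the one genuinely delicate step.'' But junction trouble is not a technicality one can wave away: if $\phi$ is assembled from pieces and develops a \emph{concave} corner at a point $x_0$ (which is exactly what happens when $\phi'$ drops from the right component of the safe set to the left one across a hill, with linear pieces on either side), then any $C^2$ test function touching from above at $x_0$ can have $\psi''$ arbitrarily negative, so whenever $a(x_0)>0$ the inequality $a\psi''+G(\theta+\psi')+\beta V\ge\beta h'$ fails, and $w$ is not a viscosity subsolution there. To make your scheme work you would have to produce a genuinely $C^1$ (or better) profile with controlled one-sided second derivatives, which is precisely what the paper does and your sketch does not.

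Second, one $h$-hill suffices; there is no need to propagate the construction along all of $\R$. The paper's subsolution, on the full-measure event $\Omega(h,y)$ from (V2), is a single globally defined $C^2$ function
\[
v^\eps(t,x)=\theta x-\eps\int_{x_0}^{x}\cchi(s)\,ds+(\beta h-\eps)\,t,
\qquad
\cchi(x)=\int_{x_0}^{x}\frac{dr}{a(r,\omega)\vee\delta},
\]
where $x_0$ is the ``$\cchi$-midpoint'' of $[\ell_1,\ell_2]$. Then $\partial_x v^\eps=\theta-\eps\cchi$, and
$a\,\partial^2_{xx}v^\eps=-\eps\,a/(a\vee\delta)\ge-\eps$ everywhere, so $\phi'':=-\eps/(a\vee\delta)$ is automatically tuned to the degeneracy of $a$: it supplies zero viscous cost from above while the budget $\int_{\ell_1}^{\ell_2}\cchi'=2y$ across the hill is enforced exactly by (V2)(a). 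Outside $[\ell_1,\ell_2]$ one has $|\cchi|\ge y$, so $G(\theta-\eps\cchi)\ge\beta h$ by the choice of $y$, and inside the hill $\beta V\ge\beta h$ does the job. There are no corners, no gluing, and $v^\eps(0,\cdot)\le\theta\cdot$ since $\cchi$ is odd-signed about $x_0$ and increasing. Your identification of the ``budget $\int_{\ell_1}^{\ell_2}dx/(a\vee\delta)=2y$'' as the role of (V2)(a) is exactly right; you just need to realize it is spent on a single smooth profile across a single hill rather than reconciled across infinitely many of them.
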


\begin{proof} 
  Recall that 
    $u_\theta$ denotes the unique solution of \eqref{eq general HJ} with
  initial condition $u_\theta(0,x,\omega)=\theta x$.

To prove the first assertion, we temporarily denote by $\HV^U(G)  (\theta,\omega)$, $\HV^L(G)(\theta, \omega)$ the limsup and liminf appearing in \eqref{eq referee}, 
respectively. 
Fix $z\in\R$, $\omega\in\Omega$ and set $w(t,x):=u_\theta(t,x+z,\omega)-z\theta$. Then $w$ satisfies $w(0,x)=\theta x$ and 
\[
 \partial_t w=a(z+x,\omega)\partial^2_{xx} w +G(\partial_x w)+  \beta V(z+x,\omega)  \qquad \hbox{in $(0,+\infty)\times\R$}.
\]
By stationarity of $a$ and $V$ and uniqueness, it follows that
$w=u(\cdot,\cdot,\tau_z\omega)$. Hence
\[
 \HV^U(G)  (\theta,\tau_z\omega)
 =
 \limsup_{t\to +\infty}\ \frac{u_\theta(t,z,\omega)-z\theta}{t}
 =
 \limsup_{t\to +\infty}\ \frac{u_\theta(t,0,\omega)}{t}
 =
 \HV^U(G)  (\theta,\omega),
\]
where for the second equality we have used the fact that
  $u_\theta$ is Lipschitz,
see Proposition \ref{prop Lip estimates}. By ergodicity, we conclude
that the map $\omega\mapsto\HV^U(G) (\theta,\omega)$ is almost surely
constant.  Similar argument applies to
  $\omega\mapsto\HV^L(G) (\theta,\omega)$. 

\indent  (i) The first inequality follows by the very definition of $\HV^L(G)$ and $\HV^U(G)$. To prove the second inequality, set 
$\alpha(h):=\alpha_0|h|^\gamma-1/\alpha_0$ and note that the function 
$v_\theta(t,x):=\theta x+\alpha(|\theta|)t$ is a subsolution of \eqref{eq general HJ} with $v_\theta(0,x)=\theta x$. By applying the comparison principle stated in 
Proposition \ref{prop Lip comp} to the functions $v_\theta(t,x)-\theta x$ and $u_\theta(t,x)-\theta x$ we get 
\[
\HV^L(G)(\theta)
= 
\liminf_{t\to +\infty} \frac{u_\theta(t,0,\omega)}{t}
\geqslant 
\liminf_{t\to +\infty} \frac{v_\theta(t,0,\omega)}{t}
=\alpha(|\theta|).
\]

(ii) The assertion is a direct consequence of Proposition \ref{prop lower bound} below.
 
(iii) We prove the assertion for $\HV^L(G)$ only, {the argument for
$\HV^U(G)$ being analogous}.  Let $\beta_1,\,\beta_2\in (0,+\infty)$
and denote by $u_i$ the solution of \eqref{eq general HJ} with
$\beta=\beta_i$ satisfying $u_i(0,x)=\theta x$ in $\R$. Then
\[
 \partial_{t }u_1\leqslant a(x,\omega) \partial^2_{xx} u_1 +G(\partial_x u_1)+\beta_2 V(x,\omega)+|\beta_1-\beta_2|\quad \hbox{in $\cyl$.}
\]
This means that $u_1-|\beta_1-\beta_2|t$ is a subsolution of \eqref{eq general HJ} with $\beta:=\beta_2$ and initial condition 
$\theta x$. By comparison we infer $u_2\geqslant u_1-|\beta_1-\beta_2|t$, hence
\begin{eqnarray*}
\HVtwo^L(G) (\theta)
 &=&
 \liminf_{t\to +\infty} \frac{u_2(t,0,\omega)}{t}\\
 &\geqslant&
 \liminf_{t\to +\infty} \frac{u_1(t,0,\omega)-|\beta_1-\beta_2|t}{t}
 =
   \HVone^L(G) (\theta)-|\beta_1-\beta_2|.
\end{eqnarray*}
By interchanging the role of $\beta_1$ and $\beta_2$ we infer  $\big| \HVone^L(G) (\theta)- \HVtwo^L(G) (\theta)\big|\leqslant |\beta_1-\beta_2|$.  

If $\beta_1{\ge}\beta_2$, we furthermore have
\[
 \partial_{t }u_1\geqslant a(x,\omega) \partial^2_{xx} u_1 +G(\partial_x u_1) +\beta_2 V(x,\omega),\quad \hbox{in $\cyl$.}
\]
meaning that $u_1$ is a supersolution of \eqref{eq general HJ} with $\beta:=\beta_2$. By comparison we infer $u_2\leqslant u_1$, hence
\[
  \HVtwo^L(G) (\theta)
 =
 \liminf_{t\to +\infty} \frac{u_2(t,0,\omega)}{t}
 \leqslant
 \liminf_{t\to +\infty} \frac{u_1(t,0,\omega)}{t}
 =
 \HVone^L(G) (\theta), 
\]
yielding the claimed monotonicity of $\beta\mapsto\HV^L(G)  (\theta)$. 

(iv) It suffices to show that $ \HV^U(G)   (0)\leqslant \beta$. This follows from the fact that the function $w(t,x)=\beta t$ is a supersolution of \eqref{eq general HJ} satisfying $w(0,x)=0$, as it can be easily seen. By comparison, 
we get $u_0(t,x)\leqslant  \beta t$, yielding $ \HV^U(G) (0)\leqslant \beta$.

The last assertion follows by well known results in stationary ergodic homogenization.
\end{proof}

We now return to the setting of Section \ref{sez intro main}. 
The next proposition shows that without loss of generality we can
assume that $c_+=-c_-$.

\begin{prop}\label{prop reduction}
For given $c_+\geqslant c_-$ in $\R$, let $G^+,G^-:\R\to\R$ be functions satisfying (G1)-(G2) with $G^+(c_+)=G^-(c_-)=0$ and set  
$G_{c_\pm}:=G^-\wedge G^+$. Let 
\[
\tilde G^\pm(p):=G^\pm\left(p+\frac{c_++c_-}{2}\right),
\quad
\tilde G(p):=G_{c_\pm}\left(p+\frac{c_++c_-}{2}\right)=(\tilde G^+\wedge \tilde G^-)(p),
\]
for every $p\in\R$. If  \eqref{eq general HJ} homogenizes with $G:=\tilde G$, 
then the same holds with $G:=G_{c_\pm}$. Furthermore, the associated 
effective Hamiltonians satisfy the following relation:
\begin{equation}\label{claim reduction}
 \HV(G_{c_\pm}) (\theta)= \HV(\tilde G)\left(\theta-\frac{c_++c_-}{2} \right)\qquad\hbox{for all $\theta\in\R$.}
\end{equation} 
\end{prop}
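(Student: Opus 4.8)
The plan is to perform a change of variables at the level of the unscaled equation~\eqref{eq general HJ}. First I would set, for a fixed realization $\omega$, the shift parameter $m:=\tfrac{c_++c_-}{2}$, and given a solution $u$ of~\eqref{eq general HJ} with $G:=G_{c_\pm}$ and initial datum $u(0,x)=\theta x$, define
\[
\tilde u(t,x):=u(t,x)-mx.
\]
A direct computation shows $\partial_t\tilde u=\partial_t u$, $\partial_{xx}^2\tilde u=\partial_{xx}^2 u$, and $\partial_x u=\partial_x\tilde u+m$, so that
\[
\partial_t\tilde u=a(x,\omega)\,\partial_{xx}^2\tilde u+G_{c_\pm}(\partial_x\tilde u+m)+\beta V(x,\omega)
=a(x,\omega)\,\partial_{xx}^2\tilde u+\tilde G(\partial_x\tilde u)+\beta V(x,\omega),
\]
by the very definition of $\tilde G$. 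Moreover $\tilde u(0,x)=(\theta-m)x$. Thus $\tilde u$ is the unique Lipschitz solution of~\eqref{eq general HJ} with nonlinearity $\tilde G$ and linear initial datum of slope $\theta-m$; that is, in the notation of Section~\ref{prelim}, $\tilde u=\tilde u_{\theta-m}$ where the subscript refers to the initial slope and the nonlinearity is $\tilde G$. Note that $\tilde G$ again satisfies (G1)--(G2) (the constants $\alpha_0,\alpha_1,\gamma$ change only by fixed factors depending on $|m|$, which is harmless), and $\tilde G=\tilde G^+\wedge\tilde G^-$ with $\tilde G^\pm$ convex and $\tilde G^+(c_+-m)=\tilde G^-(c_--m)=0$, so all the structural hypotheses are preserved; in particular the assumptions on $a$ and $V$ are untouched since the change of variables does not touch the spatial variable.

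From the identity $\tilde u_{\theta-m}(t,0,\omega)=u_\theta(t,0,\omega)$ (the $mx$ term vanishes at $x=0$) and the definitions~\eqref{eq referee}, I would immediately deduce
\[
\HV^{U}(\tilde G)(\theta-m)=\HV^{U}(G_{c_\pm})(\theta),\qquad
\HV^{L}(\tilde G)(\theta-m)=\HV^{L}(G_{c_\pm})(\theta),
\]
valid on the common almost-sure set. Hence if~\eqref{eq general HJ} homogenizes with $G:=\tilde G$, i.e.\ $\HV^{U}(\tilde G)(\cdot)=\HV^{L}(\tilde G)(\cdot)$ everywhere, then $\HV^{U}(G_{c_\pm})(\theta)=\HV^{L}(G_{c_\pm})(\theta)$ for every $\theta\in\R$, which is exactly homogenization for $G_{c_\pm}$ by the criterion recalled after Proposition~\ref{prop Lambda} (together with \cite[Lemma 4.1]{DK17} and the Lipschitz estimate of Proposition~\ref{prop Lip estimates}, which transfer verbatim). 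Relation~\eqref{claim reduction} is then just the displayed equality rewritten with $\theta-m$ in place of the argument of $\HV(\tilde G)$.

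I do not expect a serious obstacle here; the only points needing a line of care are: (i) checking that $\tilde u$ is again a \emph{viscosity} solution after an affine change of variables — this is standard, since adding a smooth (here linear) function to a viscosity solution and absorbing it into the Hamiltonian preserves the sub/supersolution property; (ii) verifying that $\tilde G^\pm\in\Ham$ with the crossing structure at $\widehat p-m$, so that Theorem~\ref{teo main 1} is genuinely applicable to $\tilde G$; and (iii) making sure the almost-sure sets on which the limits in~\eqref{eq referee} are constant can be chosen the same for both $G_{c_\pm}$ and $\tilde G$, which follows from the bijective correspondence $u_\theta\leftrightarrow\tilde u_{\theta-m}$ on every fixed $\omega$. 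Once these are in place the statement follows by unwinding definitions.
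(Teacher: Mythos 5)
Your proposal is correct and follows essentially the same route as the paper: an affine change of variables $u\mapsto u-\tfrac{c_++c_-}{2}\,x$ at the level of the unscaled equation, which turns the solution with nonlinearity $G_{c_\pm}$ and slope $\theta$ into the solution with nonlinearity $\tilde G$ and slope $\theta-\tfrac{c_++c_-}{2}$, and then observing the two coincide at $x=0$. Your extra remarks on preservation of (G1)--(G2), the viscosity property under adding a linear function, and the matching of the almost-sure sets are sound but not elaborated in the paper, which treats them as immediate.
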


Note that $\tilde G^+(c)=\tilde G^-(-c)=0$ with $c=(c_+ - c_-)/2$.

\begin{proof}
Let us set $k:=-(c_++c_-)/2$. For every fixed $\theta\in\R$, let us denote by $v_{\theta}$ the solution of \eqref{eq general HJ} with  
$G:=G_{c_\pm}$  and initial condition $v_\theta(0,x)=\theta x$. The function $u(t,x)=v_\theta(t,x)+kx$ 
solves equation \eqref{eq general HJ} with  $G:=\tilde G$ and initial condition 
$u(0,x)=\big(\theta + k\big) x$. Since the latter equation homogenizes by hypothesis, we get 
\[
\HV(\tilde G)\left(\theta+k \right)
=
\lim_{t\to +\infty}\ \frac{u(t,0,\omega)}{t}
=
\lim_{t\to +\infty}\ \frac{v_\theta(t,0,\omega)}{t}, 
\]
yielding \eqref{claim reduction}. 
\end{proof}

%
%
%
%
%
%
We shall therefore restrict our attention to the case $c_+=-c_-=: c$
and set $G_c:=G^-\wedge G^+$.  Up to replacing $G_c$ with
$\check G_c(p):=G_c(-p)$, $p\in\R$, we can furthermore assume, without
loss of generality, that $\widehat{p}\geqslant 0$.  Note that
\begin{equation}\label{eq trivial}
G^-(\widehat{p})=G^+(\widehat{p})=G_c(\widehat{p})=\max_{p\in[-c,c]} G_c(p).
\end{equation}


\section{Upper and lower bounds}\label{bounds}

Our goal is to show that $\HV^L(G_c)=\HV^U(G_c)$, which is a necessary
and sufficient condition for homogenization of \eqref{eq general HJ}
with $G:=G_c$, as remarked above. We
start by proving suitable {lower and upper bounds for
  these lower and upper limits.}

\subsection{Lower bound}
We aim at proving the following lower bound:
\begin{equation}\label{eq lower bound 1}
\HV^L(G_c) (\theta)\geqslant  \beta \qquad \hbox{for every  $\theta\in\R$.}
\end{equation}
This follows from the following more general result:

\begin{prop}\label{prop lower bound}
  Let $G\in \Ham$ and $V$ satisfy the scaled hill condition
    (V2)(a),(h). Then
\begin{equation}\label{eq lower bound 1bis}
\HV^L(G)  (\theta)\geqslant  \beta \qquad \hbox{for every  $\theta\in\R$.}
\end{equation}
\end{prop}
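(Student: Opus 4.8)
The plan is to establish \eqref{eq lower bound 1bis} by constructing, for each fixed $\theta$ and each target level $h\in(0,1)$, a subsolution of \eqref{eq general HJ} whose value at the origin grows at rate close to $\beta h$ along a well-chosen sequence of times, and then comparing it with $u_\theta$. The key input is the scaled hill condition (V2)(a),(h): given $h$ and a parameter $y>0$, almost surely there is an interval $[\ell_1,\ell_2]$ on which $V(\cdot,\omega)\ge h$ and such that $\int_{\ell_1}^{\ell_2}\frac{dx}{a(x,\omega)\vee\delta}=2y$ for some $\delta\in(0,1)$. On such a ``thick hill'' the equation effectively behaves like $\partial_t u = a\,\partial^2_{xx}u + G(\partial_x u) + \beta h$, and since $G\ge 0$, we can drop $G$ and look for a subsolution of the linear equation $\partial_t w \le (a\vee\delta)\,\partial^2_{xx}w + \beta h$ (using $a\le a\vee\delta$ for the second-derivative term when that term is nonnegative, i.e.\ when $w$ is convex in $x$; one arranges $w$ concave where needed, or more carefully uses that $a\ge 0$ and builds $w$ with $\partial^2_{xx}w\le 0$ so that $a\,\partial^2_{xx}w \ge \delta\,\partial^2_{xx}w$ fails — so instead one keeps $\partial^2_{xx}w$ with a sign that makes the inequality work, see below).

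Concretely, I would take the stationary ``hill corrector'' on $[\ell_1,\ell_2]$: let $\phi$ solve the ODE $(a(x,\omega)\vee\delta)\,\phi''(x) + \beta h = 0$ on $[\ell_1,\ell_2]$ with $\phi(\ell_1)=\phi(\ell_2)=0$, chosen so that $\phi\ge 0$ on the interval; explicitly $\phi'(x) = \int_x^{x_0}\frac{\beta h}{a(s,\omega)\vee\delta}\,ds$ for an appropriate $x_0$, and $\phi(x_0)$ is comparable to $\beta h\cdot(\ell_2-\ell_1)^2$ — but the relevant scale is governed by the quantity $2y=\int_{\ell_1}^{\ell_2}\frac{dx}{a\vee\delta}$, so $\max\phi \asymp \beta h\, y^2$ type bounds hold. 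Then the function $w(t,x):=\theta x + \min\{\phi(x),\,\text{(linear continuation)}\} + \beta h\, t - C$ (suitably modified outside $[\ell_1,\ell_2]$ so that it is globally a subsolution — e.g.\ replace $\phi$ by a concave Lipschitz cap and subtract a large constant so that $w(0,\cdot)\le \theta x$) is a subsolution of \eqref{eq general HJ}: on the hill because $a\,\partial^2_{xx}w + G(\partial_x w) + \beta V \ge (a\vee\delta)\phi'' + 0 + \beta h = 0 = \partial_t w - \beta h$, wait — one must be careful that $a\,\phi''\ge (a\vee\delta)\phi''$ requires $\phi''\le 0$, which is exactly the case since $\phi'' = -\beta h/(a\vee\delta)<0$; and outside the hill $G\ge 0$, $\beta V\ge 0$ handle the missing $\beta h$ after adjusting constants, or one localizes in time. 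Comparison (Proposition~\ref{prop Lip comp}) with $u_\theta(t,x)-\theta x$ then gives $u_\theta(t,0,\omega)\ge w(t,0)\ge \beta h\, t - C(h,\omega)$, and dividing by $t$ and sending $t\to\infty$ yields $\HV^L(G)(\theta)\ge \beta h$. Since $h\in(0,1)$ is arbitrary, \eqref{eq lower bound 1bis} follows.

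\textbf{Main obstacle.} The delicate point is globalizing the local (on $[\ell_1,\ell_2]$) subsolution to one that is admissible for comparison on all of $(0,+\infty)\times\R$ with the correct initial data $\theta x$: outside the hill we only know $V\ge 0$, not $V\ge h$, so the constructed bump must be glued to a globally defined Lipschitz function that is a subsolution everywhere, which forces the gain $\beta h\, t$ to be ``charged'' only against time spent with the profile concentrated near the hill — this is why the precise role of the condition (a), equating an $a$-weighted length to $2y$ and letting $y\to\infty$, is essential: it lets $\max\phi$ (the deficit one must absorb as a negative additive constant) be controlled independently of how large a rate $\beta h$ one extracts, or more precisely lets one run the argument over a growing family of hills. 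I expect the bookkeeping of this gluing, together with verifying the viscosity subsolution inequality at the non-smooth junction points (where $\min$ of two smooth functions creates a convex kink, which is the favorable direction for subsolutions), to be the technical heart; the rest is routine comparison and a limit in $h$.
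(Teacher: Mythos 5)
There is a genuine gap in the construction of the subsolution on the hill, and it is not a bookkeeping issue but a sign/magnitude error that defeats the whole purpose of the hill.

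Your profile $\phi$ solves $(a\vee\delta)\phi''+\beta h=0$, so $\phi''=-\beta h/(a\vee\delta)$ and, since $\phi''\le 0$ and $a\le a\vee\delta$, on the hill you get
\[
a\,\partial^2_{xx}w+G(\partial_x w)+\beta V \;\ge\; a\,\phi''+0+\beta h \;\ge\; (a\vee\delta)\phi''+\beta h \;=\;-\beta h+\beta h\;=\;0.
\]
But the subsolution inequality to be verified is $\partial_t w\le a\,\partial^2_{xx}w+G(\partial_x w)+\beta V$, and you chose $\partial_t w=\beta h$. Your estimate only gives a lower bound of $0$ for the right-hand side, so what you would need is $\beta h\le 0$, which is false. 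The chain of inequalities in your sketch silently slides from ``$\ge0$'' to ``$\ge\partial_t w$''. The underlying reason is that the ODE you chose makes the curvature of $\phi$ absorb \emph{all} of the potential gain $\beta h$: the $\beta h$ was supposed to flow into the time derivative of the subsolution, but instead it is cancelled by $a\,\phi''$. The same problem recurs outside the hill: with the second derivative $0$ and only $G\ge0$, $V\ge0$ available, you again obtain a right-hand side $\ge0$ rather than $\ge\beta h$; your remark ``adjust constants or localize in time'' does not repair a pointwise inequality that fails on an open set.

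The fix is to make the curvature \emph{small}, not of order $\beta h$: take an auxiliary parameter $\eps>0$ and set the spatial profile to satisfy $(a\vee\delta)\,\phi''=-\eps$, i.e.\ $\phi'(x)=-\eps\,\chi(x)$ with $\chi(x)=\int_{x_0}^x\frac{dr}{a(r,\omega)\vee\delta}$; then $a\,\phi''\ge-\eps$, so the curvature costs only $\eps$. On the hill, $V\ge h$ supplies $\beta h$, so the right-hand side is $\ge\beta h-\eps$. Off the hill, condition (V2)(a) and the choice of the midpoint $x_0$ guarantee $|\chi(x)|\ge y$, and one picks $y$ large enough (using coercivity of $G$) that $G(\theta-\eps\chi(x))\ge\beta h$ for $|\chi(x)|\ge y$; together with $V\ge0$ the right-hand side is again $\ge\beta h-\eps$. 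Setting $\partial_t w=\beta h-\eps$ then gives a genuine global classical subsolution, comparison yields $\HV^L(G)(\theta)\ge\beta h-\eps$, and sending $\eps\to0^+$ and $h\to1^-$ finishes the proof. This is precisely what the paper's construction does; beyond this sign issue, your gluing and comparison-principle steps are on the right track, and the ``negative constant to absorb'' that you worry about is in fact harmless since $\int_{x_0}^x\chi(s)\,ds\ge0$ automatically ensures $w(0,x)\le\theta x$ and the constant drops out of the time-averaged limit.
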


\begin{proof}
Let us fix $\theta\in\R$. We want to find a subsolution $v$ to \eqref{eq general HJ} satisfying $v(0,x)\leqslant \theta x$. 
Pick $\eps>0$ and $h\in (0,1)$. Choose $y>0$ big enough so that 
\begin{equation}\label{eq lb 1}
 G(\theta+\eps p)\geqslant \beta h\qquad\hbox{for every $|p|\geq {y}$.}
\end{equation}
Choose $\Omega(h,y)\subseteq \Omega$ of probability 1 as in the scaled hill condition, see (V2). Pick $\omega\in\Omega(h,y)$ and choose $\ell_1<\ell_2$ 
and $\delta$ such that (a),(h) hold in (V2). Pick $x_0\in (\ell_1, \ell_2)$ such that 
\begin{equation}\label{eq lb 2}
 \int_{\ell_1}^{x_0} \frac{1}{a(r,\omega)\vee \delta} \, dr
 =
 \int_{x_0}^{\ell_2} \frac{1}{a(r,\omega)\vee \delta}\, dr
 =y
\end{equation}
and set 
\begin{eqnarray*}
 \cchi(x):=\int_{x_0}^x \frac{1}{a(r,\omega)\vee \delta}\, dr,\qquad && x\in\R\\
 v^\eps(t,x)=\theta x-\eps\int_{x_0}^x \cchi(s)\,ds+(\beta h-\eps)t,\qquad && (t,x)\in\ccyl.  
\end{eqnarray*}
Note that $\partial_x v^\eps(t,x)=\theta -\eps \cchi(x)$,\ \ $a(x,\omega)\partial^2_{xx} v^\eps(t,x)\geq-\eps$\ \ in $\cyl$. 
We are going to show that $v^\eps$ is a subsolution of \eqref{eq general HJ}. Indeed, for every $t>0$ and $x\in\R$ we have
\[
 a(x,\omega) \partial^2_{xx} v^\eps  +G\left(\partial_x v^\eps \right)+\beta V(x,\omega)
 \geqslant
 -\eps+G\left(\theta-\eps \cchi(x)\right)+\beta V(x,\omega)
 \geqslant 
 -\eps+\beta h
\]
For $x\in [\ell_1,\ell_2]$, the above inequality holds true  
for \ $V(\cdot,\omega)\geqslant h$ \ in $[\ell_1,\ell_2]$ by (V2)-(h) and $G\geqslant 0$ in $\R$. 
For $x\in\R\setminus [\ell_1,\ell_2]$, it holds true 
for \ $G\left(\theta-\eps \cchi(x)\right)\geqslant \beta h$ \ in $(-\infty,\ell_1)\cup(\ell_2,+\infty)$ in view of \eqref{eq lb 1}, \eqref{eq lb 2} and 
$V(\cdot,\omega)\geqslant 0$ in $\R$.  
In either case, $v^\eps$ is a subsolution of \eqref{eq general HJ} satisfying
\[
 v^\eps(0,x)=\theta x -\eps\int_{x_0}^x \cchi(s)\,ds\leqslant \theta x.
\]
Let $u_\theta$ be the solution of \eqref{eq general HJ} satisfying
$u_\theta(0,x)=\theta x$.  Since $u_\theta$ is
Lipschitz on $\cyl$, see Proposition \ref{prop Lip estimates}, the
function $u_\theta(t,x,\omega) -\theta x$ is bounded in
$\cTcyl$, for every fixed $T>0$. We can therefore apply the comparison
principle stated in Proposition \ref{prop Lip comp} to
$u_\theta(t,x,\omega) -\theta x$ and $v^\eps(t,x)-\theta x$ with
$G(\theta+\cdot)$ in place of $G$ and get
$u_\theta(t,x,\omega)\geqslant v^\eps(t,x)$ for every $(t,x)\in \cyl$
and $\omega\in\Omega_\eps$. We {conclude that}
\[
 \liminf_{t\to +\infty} \frac{u_\theta(t,0,\omega)}{t}
 \geqslant
 \liminf_{t\to +\infty} \frac{v^\eps(t,0)}{t}=\beta h-{\eps}.
\]
Since this holds for every $\omega\in \Omega(h,y)$ and $\PP(\Omega(h,y))>0$, we infer that 
\[
 \HV^L({G}) (\theta)\geqslant \beta h-{\eps}.
\]
Now let $\eps\to 0^+$ and then $h\to 1^-$ to get the desired lower bound \eqref{eq lower bound 1bis}. 
\end{proof}

\subsection{General upper bound.}
We aim at proving the following general upper bound
\begin{equation}\label{eq general upper bound}
\HV^U(G_c) (\theta)\leqslant \min\left\{\HV(G^-) (\theta),\HV(G^+) (\theta)\right\} \qquad\hbox{for all $\theta\in\R$.}
\end{equation}
Since the function $G^\pm$ are convex,  equation \eqref{eq general HJ} with 
$G:=G^\pm$ homogenizes with effective Hamiltonian $\theta\mapsto\HV(G^\pm) (\theta)$. 
For every fixed $\theta\in\R$, let us denote by $u^\pm_\theta$ the solution to \eqref{eq general HJ} with $G:=G^\pm$ and 
initial condition $u^\pm_\theta(0,x,\omega)=\theta x$. 
Since $G_c\leqslant G^\pm$, by the comparison principle we infer 
\[
 \HV^U(G_c) (\theta)
 =
 \limsup_{t\to +\infty}\ \frac{u_\theta(t,0,\omega)}{t}
 \leqslant
  \limsup_{t\to +\infty}\ \frac{u^\pm_\theta(t,0,\omega)}{t}
  =
 \HV(G^\pm) (\theta),
   \]
yielding the sought general upper bound. \qed

\subsection{Upper bound when $|\theta|\leqslant c$.} We aim at proving the following upper bound
\begin{equation}\label{eq upper bound 1}
\HV^U(G_c) (\theta)\leqslant \max\{\beta,G_c(\widehat{p})\} \qquad \hbox{for  $|\theta|\leqslant c$.}
\end{equation}

This bound follows from the next proposition by recalling \eqref{eq trivial}, i.e.\ that $G_c(\widehat{p})=\max_{p\in[-c,c]} G_c(p)$. 

\begin{prop}\label{prop upper bound}
  Let $G\in \Ham$ be such that $G(\pm c)=0$ and $V$
    satisfy the scaled valley condition (V2)(a),(v). Then
\begin{equation}\label{eq upper bound 1bis}
\HV^U(G)  (\theta)\leqslant \max\{\beta, \max_{[-c,c]}G(\cdot)\}\qquad \hbox{for every  $|\theta|\leqslant c$.}
\end{equation}
\end{prop}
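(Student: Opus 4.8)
The plan is to mimic the strategy used for the lower bound in Proposition~\ref{prop lower bound}, but now constructing a \emph{supersolution} of \eqref{eq general HJ} with linear initial data $\theta x$ whose growth rate is at most $\max\{\beta,\max_{[-c,c]}G\}$, and then invoking the comparison principle of Proposition~\ref{prop Lip comp} against $u_\theta$. Fix $|\theta|\le c$, $\eps>0$, and $h\in(0,1)$. Because $G(\pm c)=0$ and $G$ is continuous, we can choose $y>0$ large enough that a curve of slope $\theta-\eps\cchi(x)$, where $\cchi$ is again a primitive of $1/(a(\cdot,\omega)\vee\delta)$ with total variation $2y$ across an interval $[\ell_1,\ell_2]$, will have slope pushed down to values where $G$ is small (ideally slope near $\pm c$ where $G$ vanishes, say $G\le \eps$) once $|x|$ exceeds that interval; the point is that outside $[\ell_1,\ell_2]$ we want $G(\theta-\eps\cchi(x))\le \max_{[-c,c]}G+\eps$ or smaller, using that $\cchi$ drives the argument back toward $[-c,c]$. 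Inside $[\ell_1,\ell_2]$ we will instead use the scaled \emph{valley} condition (V2)(a),(v): there $V(\cdot,\omega)\le h$, so $\beta V\le \beta h\le \beta$.

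The candidate supersolution is
\[
 w^\eps(t,x)=\theta x-\eps\int_{x_0}^x\cchi(s)\,ds+\big(\max\{\beta,\textstyle\max_{[-c,c]}G\}+\eps\big)\,t,
\]
with $x_0\in(\ell_1,\ell_2)$ the balance point of \eqref{eq lb 2}. As in the earlier proof, $\partial_x w^\eps=\theta-\eps\cchi(x)$ and $a(x,\omega)\partial^2_{xx}w^\eps\le \eps$ in $\cyl$ (the sign is now favorable for a supersolution). Then for $t>0$, $x\in\R$,
\[
 \partial_t w^\eps-a\,\partial^2_{xx}w^\eps-G(\partial_x w^\eps)-\beta V(x,\omega)\ \ge\ \max\{\beta,\textstyle\max_{[-c,c]}G\}+\eps-\eps-G(\theta-\eps\cchi(x))-\beta V(x,\omega).
\]
On $[\ell_1,\ell_2]$ we bound $G(\theta-\eps\cchi(x))\le\max_{[-c,c]}G$ only if $\theta-\eps\cchi(x)\in[-c,c]$; here is where the balancing of $\cchi$ and the choice of $x_0$ matter — since $\cchi(x_0)=0$ and $|\cchi|\le y$ on $[\ell_1,\ell_2]$ in a controlled way, and since on that interval $\beta V\le\beta h\le\beta$, the right side is $\ge 0$. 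On $\R\setminus[\ell_1,\ell_2]$ we instead use the choice of $y$ so that $G(\theta-\eps\cchi(x))$ is within $\eps$ of $0$ (or at worst $\le\max_{[-c,c]}G$), together with $\beta V\le\beta$, to again get $\ge 0$, perhaps after absorbing errors into the $+\eps$ and passing $\eps\to0^+$, $h\to1^-$ at the end. In either region $w^\eps$ is a supersolution with $w^\eps(0,x)=\theta x-\eps\int_{x_0}^x\cchi\ge\theta x$ is not automatic — one may need the opposite sign on the $\eps$-perturbation (i.e.\ $+\eps\int_{x_0}^x\cchi$) so that $w^\eps(0,\cdot)\ge\theta x$; this is a cosmetic sign choice that must be made consistently with keeping $a\partial^2_{xx}w^\eps$ of the right sign, exactly the trade-off already handled in Proposition~\ref{prop lower bound}. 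Comparison (Proposition~\ref{prop Lip comp}, applied after subtracting $\theta x$ and with $G(\theta+\cdot)$ in place of $G$) then gives $u_\theta(t,x,\omega)\le w^\eps(t,x)$ on $\cyl$, hence $\HV^U(G)(\theta)\le\max\{\beta,\max_{[-c,c]}G\}+O(\eps)$; let $\eps\to0^+$.

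The main obstacle is the control of $G$ on $\R\setminus[\ell_1,\ell_2]$: unlike the lower-bound argument, where coercivity of $G$ made $G(\theta-\eps\cchi(x))$ \emph{large} for large $|\cchi|$ (which was what we wanted), here we need $G(\theta-\eps\cchi(x))$ to stay \emph{bounded by $\max_{[-c,c]}G$}, and coercivity works against us — far from $[\ell_1,\ell_2]$ the slope $\theta-\eps\cchi(x)$ can wander far from $[-c,c]$ and $G$ can be huge. The fix is that we do not actually need $w^\eps$ to be a supersolution on \emph{all} of $\R$ globally for all time; it suffices to compare on a region and track only $w^\eps(t,0)/t$. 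One standard device is to localize: replace $\cchi$ by a \emph{bounded} function that equals the integral of $1/(a\vee\delta)$ on $[\ell_1,\ell_2]$ and is then held constant (equal to $\pm y$) outside, so $\partial_x w^\eps=\theta$ outside a neighborhood, and on $[\ell_1,\ell_2]$ the slope sweeps from near $c$ down to near $-c$ (matching $G(\pm c)=0$), keeping $G(\partial_x w^\eps)\le\max_{[-c,c]}G$ throughout; then the valley condition handles $\beta V$. Making this modification compatible with the $a\partial^2_{xx}$ term at the junction points $\ell_1,\ell_2$ (where $\cchi$ has a corner) requires the usual mollification or a viscosity-solution "corner from below/above" argument, which is routine given the Lipschitz machinery of Section~\ref{sez appendix stationary}. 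With $\cchi$ bounded, $w^\eps(0,x)-\theta x$ is bounded, comparison applies on all of $\cyl$, and the estimate closes.
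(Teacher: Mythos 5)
Your opening attempt (porting the lower-bound profile $\theta - \eps\cchi$ with the lower-bound bookkeeping) fails for exactly the reason you identify: coercivity of $G$ is the friend of a subsolution bound and the enemy of a supersolution bound. Good — but the ``fix'' you sketch does not actually close the gap, and the paper's proof is meaningfully different in its design.

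The central tension you flag but do not resolve is the sign of the $\eps$-perturbation. For the comparison $u_\theta\le w$ you need $w(0,\cdot)\ge \theta x$, which forces the spatial profile to be convex-like, i.e.\ to have $\partial^2_{xx}w\ge 0$; but for a supersolution the effective error $a\,\partial^2_{xx}w$ then appears \emph{on the wrong side} (it is added to $G + \beta V$). With the ansatz $\theta x \pm \eps\int\cchi$, one sign ruins the initial-data domination and the other ruins the second-derivative sign; there is no ``cosmetic'' choice. The paper sidesteps this entirely by not patching the $\theta x - \eps\int\cchi$ ansatz: it builds a $\theta$-independent, $C^1$ profile $s(x)$ equal to $-c$ on $(-\infty,\ell_1]$ and $+c$ on $[\ell_2,+\infty)$, with $s'\ge 0$, so $\int_{x_0}^x s$ is convex, grows like $c|x|$, and dominates $\theta x$ after adding a $\theta$-dependent constant $k(\theta)$. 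Crucially, $s$ is constructed by rescaling through $\int 1/(a\vee\delta)$ over $[\ell_1,\ell_2]$ with total mass $2y$, so that although $s'\ge 0$ (the ``wrong'' sign for a supersolution), the product $a(x,\omega)\,s'(x)$ is bounded by $O(1/y)$, which is sent to $0$ by letting $y\to+\infty$ at the end. This is the real mechanism, and it has no analogue in your proposed localized $\cchi$.

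Two further concrete problems with your sketch: (i) if the modified $\cchi$ is constant $\mp y$ outside $[\ell_1,\ell_2]$, then $\partial_x w^\eps=\theta\pm\eps y$ there, not $\theta$; with $|\theta|\le c$ there is no uniform $\eps y$ making both outer slopes equal to $\mp c$ unless $\theta=0$, so you cannot in general get $G(\partial_x w^\eps)=0$ outside from an ansatz of the form $\theta x-\eps\int\cchi$; and (ii) you have the slope ``sweeping from $c$ to $-c$'' on $[\ell_1,\ell_2]$ — that is the wrong orientation and would produce a concave primitive asymptotic to $-c|x|$, which cannot dominate $\theta x$. These are not merely cosmetic; they reflect that the correct profile is not a perturbation of $\theta x$ at all, but a fixed cone-like shape with the slope anchored at $\pm c$. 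Your idea of using the valley condition to control $\beta V$ on $[\ell_1,\ell_2]$ and the vanishing $G(\pm c)=0$ to control $G$ outside is indeed the right high-level plan, and matches the paper's proof; but the realization you propose would not go through without essentially rediscovering the $s$-profile and the $a s' = O(1/y)$ bound.
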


%

\begin{proof}
  Let us {denote the right-hand side of \eqref{eq upper
      bound 1bis} by $\eta$}.  We would like to find a supersolution $w$ to
  \eqref{eq general HJ} of the form $w(t,x)=:\tilde w(x)+\eta t$ with
  $\tilde w(x)\geqslant \theta x$. The {naive} idea is to set
  $\tilde w(x):=c|x|$. An easy computation shows that $\tilde w$
  satisfies, for $x{\ne} 0$,
\[
a(x,\omega)  \partial_x^2 \tilde w+G (\partial_x \tilde w )+\beta V(x,\omega) 
=
G (\pm c)+\beta V(x,\omega) 
\leqslant 
\beta
\leqslant 
\eta,
\]
so $w(t,x)$ is a supersolution to \eqref{eq general HJ} in $\R\setminus\{0\}\times(0,+\infty)$. The problem is that $w(t,x)$ is not a supersolution at $x=0$. Note that
$\tilde w(x)=c|x|\geqslant |\theta| |x|\geqslant \theta x$.  

We need to modify the definition of $w(t,x)=c|x|+t \eta $. We begin by
replacing the function $x\mapsto c|x|$ with a smooth one, whose
derivative is equal to $c$ or $-c$ outside a compact interval.  To
this aim, let us fix $h\in (0,1)$ and $y>0$. Choose
$\Omega(h,y)\subseteq \Omega$ of probability 1 as in the scaled valley
condition (V2). Pick $\omega\in\Omega(h,y)$ and choose
$\ell_1<\ell_2$ and $\delta$ such that (a),(v) hold in (V2). Set
$a_\delta(x):=a(x,\omega)\vee\delta$ and pick
$x_0\in (\ell_1, \ell_2)$ such that
\begin{equation}\label{eq ub 1}
 \int_{\ell_1}^{x_0} \frac{1}{a_{\delta}(r)} \, dr
 =
 \int_{x_0}^{\ell_2} \frac{1}{a_\delta(r)}\, dr
 =y
\end{equation}
We define a function $s:\R\to\R$ by setting 
\begin{eqnarray*}
s(x):=
\begin{cases}
-c & \quad\hbox{if $x\leqslant \ell_1$}\\
\dfrac{c}{2}\left(\dfrac1y\displaystyle
\int_{x_0}^x \dfrac{1}{a_\delta(r)} dr \right)\left( 3-\left(\dfrac{1}{y}\displaystyle
\int_{x_0}^x \dfrac{1}{a_\delta(r)}dr\right)^2 \right) &\quad \hbox{if $x\in (\ell_1,\ell_2)$}\\
c & \quad\hbox{if $x\geqslant \ell_2$.}
\end{cases}
\end{eqnarray*}
First notice that $s(\ell_1^+)=-c$, $s(\ell^-_2)=c$, yielding that $s$ is continuous. Furthermore,  
\[
s'(x)=\dfrac{3c}{2y\,a_\delta(x)} \left(1-\left(\dfrac{1}{y}\int_{x_0}^x\dfrac{1}{a_\delta(r)} dr  \right)^2\right)\qquad\hbox{for $x\in (\ell_1,\ell_2)$},
\]
hence $s'(\ell_1^+)=s'(\ell_2^-)=0$, showing that $s$ is actually of class $C^1$. Also notice that $s'>0$ in $(\ell_1,\ell_2)$, in particular 
\begin{equation}\label{eq ub 2}
-c=s(\ell_1)<s(x)<s(\ell_2)=c\qquad \hbox{for all $x\in (\ell_1,\ell_2)$.}
\end{equation}

For $x\in [\ell_1,\ell_2]$, we get
\begin{equation}\label{eq ub3}
a(x,\omega)s'(x)+G(s(x))+\beta V(x,\omega) 
\leqslant 
\frac{3c}{y}+G(s(x))+\beta V(x,\omega)
\leqslant 
\dfrac{3c}{2y}+\eta+\beta h
\end{equation}
in view of \eqref{eq ub3} and of the fact that $V(\cdot,\omega)\leqslant h$ in $[\ell_1,\ell_2]$ by (V2)-(v). 

For $x\in(-\infty,\ell_1)\cup(\ell_2,+\infty)$, we have 
\begin{equation}\label{eq ub4}
a(x,\omega)s'(x)+G(s(x))+\beta V(x,\omega) \leqslant G(\pm c)+\beta=\beta\leqslant \eta.
\end{equation}
Now set 
\[
w(t,x):=k(\theta)+\int_{x_0}^x s(r)\,dr + \left(\eta+\dfrac{3c}{2y}+\beta h\right) t,\qquad (t,x)\in \ccyl,
\]
with $k(\theta)$ chosen big enough so that $w(0,x)\geqslant \theta x$. For instance, take 
\[
-k(\theta):=\theta x_0+\min_{\ell_1\leqslant x \leqslant \ell_2}\int_{x_0}^x\left(s(r)-\theta\right)\,dr.
\]
From what proved above we infer that $w$ is a $C^2$ (classical) supersolution of \eqref{eq general HJ} satisfying $w(0,x)\geqslant \theta x$. 
Let $u_\theta$ be the solution of \eqref{eq general HJ} satisfying
$u_\theta(0,x)=\theta x$.  Since $u_\theta$ is Lipschitz on $\cyl$ by
Proposition \ref{prop Lip estimates}, the function
$u_\theta(t,x) -\theta x$ is bounded in $\cTcyl$ for every fixed
$T>0$. We can therefore apply the comparison principle stated in
Proposition \ref{prop Lip comp} to $u_\theta(t,x) -\theta x$ and
$w(t,x)-\theta x$ with $G(\theta+\cdot)$ in place of $G$ and get
\[
u_\theta(t,x,\omega)\leqslant w(t,x)\qquad\hbox{for every $(t,x)\in (0,+\infty)\times\R$,}
\]
in particular 
\[
\limsup_{t\to +\infty} \frac{u_\theta(t,0,\omega)}{t}
\leqslant 
\limsup_{t\to +\infty} \frac{w(t,0)}{t}
=
\eta+\dfrac{3c}{2y}+\beta h.
\]
Since this holds for every $\omega\in \Omega(h,y)$ and $\PP(\Omega(h,y))>0$, we infer that 
\[
\HV^U(G)  (\theta)\leqslant \eta+\dfrac{3c}{2y}+\beta h.
\]
Now we send $h\to 0^+$ and $y\to +\infty$ to get the upper bound \eqref{eq upper  bound 1}. 
\end{proof}

\section{Existence of correctors}\label{sez existence of
  correctors}

The goal of the present section is to single out conditions on
$\theta\in\R$ under which we have correctors for \eqref{eq general HJ}.  In the sequel, we will say that
a function $u:\R\to\R$ is {\em sublinear} or has {\em sublinear
  growth} to mean that
\[
\lim_{|x|\to +\infty} \frac{u(x)}{1+|x|}=0. 
\]

\subsection{Correctors}\label{subsect correctors}
In this subsection, we collect and prove some key results we shall
need for our analysis.  We shall assume that
  $G:\R\to [0,+\infty)$ is a function in $\Ham$ satisfying the
following additional assumption:
\begin{itemize}
\item[(G3)] \quad $G(0)=0$;\smallskip
\item[(G4)] \quad $G$ is convex.
\end{itemize}
Notice that conditions (G3)-(G4) and the fact that $G\geqslant 0$ in
$\R$ imply that $G$ is nonincreasing  in
$(-\infty,0]$ and nondecreasing  in
$[0,+\infty)$.  By known results in stationary ergodic homogenization,
the equation \eqref{eq general HJ} homogenizes. We shall denote by
$\HV(G) $ the corresponding effective Hamiltonian.  Since
$V\geqslant 0$, we get
\begin{equation}\label{eq HV(G)>G}
\HV(G)(\theta)  \geqslant G(\theta)\qquad\hbox{for all $\theta\in\R$}.
\end{equation}
We know that $\HV(G)  $ is convex and coercive and has a minimum at $0$ with $\HV(G)  (0)=\beta$, see Proposition \ref{prop Lambda}.
The following proposition shows the
  existence of a Lipschitz continuous corrector with stationary
  gradient for every $\theta$ satisfying $\HV(G)(\theta)>\beta$.

\begin{prop}\label{prop CS}
Let $\theta\in\R$ {be} such that $\HV(G)  (\theta)>\beta$. Then there exists a random variable 
$\Omega\ni \omega\mapsto F_\theta(\cdot,\omega)\in\D{C}(\R)$ such that, for every $\omega$ in a set $\Omega_{\theta}$ of probability 1, 
$F_\theta(\cdot,\omega)$ is the unique sublinear viscosity solution of the stationary viscous Hamilton--Jacobi equation
\begin{equation}\label{eq corrector}
a(x,\omega) u'' +G(\theta+u ')+\beta V(x,\omega)=\HV(G)  (\theta)\qquad\hbox{in $\R$}
\end{equation}
satisfying  $F_\theta(0,\omega)=0$ for every $\omega\in \Omega$. The set $\Omega_{\theta}$ is invariant under the action of $(\tau_z)_{z\in\R}$, i.e. 
$\tau_z\big(\Omega_{\theta})=\Omega_{\theta}$ for every $z\in\R$. 
Furthermore, the function $F_\theta(\cdot,\omega)$ is $\kappa(\theta)$--Lipschitz continuous on $\R$ for  $\PP$--a.e. $\omega\in\Omega$, where 
$\kappa:\R\to [0,+\infty)$ is a locally bounded function, and  has stationary gradient, i.e. for every $\omega$ in a 
set of probability 1 we have
\[
F_\theta'(\cdot+z,\omega)=F_\theta'(\cdot,\tau_z\omega)\qquad\hbox{a.e. on $\R$ \quad for every $z\in\R$.}
\]
\end{prop}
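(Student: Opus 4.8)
The plan is to invoke the Cardaliaguet--Souganidis result \cite{CaSo17} to produce a sublinear corrector, then to upgrade its properties (uniqueness, Lipschitz bound, stationary gradient) using the PDE machinery collected in Appendix~\ref{pderes}. First I would verify that equation \eqref{eq general HJ} with $G$ satisfying (G1)--(G4) fits the framework of \cite{CaSo17}: the Hamiltonian $p\mapsto G(\theta+p)+\beta V(x,\omega)$ is convex and coercive in $p$ (by (G1) and (G4)), the diffusion $a$ is nonnegative with $\sqrt a$ Lipschitz (by (A)), and the relevant structural hypothesis (H) of \cite{CaSo17} — a quantitative equi-Lipschitz/equi-continuity estimate for solutions — follows from Theorem~\ref{teo parabolic eq} and Proposition~\ref{prop Lip estimates}; this is exactly the point flagged in the text. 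Since $\HV(G)(\theta)>\beta=\min\HV(G)$, the level $\HV(G)(\theta)$ lies strictly above the flat minimum, so $\theta$ is \emph{not} in a ``flat part'' of the effective Hamiltonian, and \cite{CaSo17} then guarantees a measurable selection $\omega\mapsto F_\theta(\cdot,\omega)$ of sublinear viscosity solutions of \eqref{eq corrector} with $F_\theta(0,\omega)=0$, defined on a $(\tau_z)$-invariant full-measure set $\Omega_\theta$.

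Next I would establish uniqueness of the sublinear solution. This is where Appendix~\ref{pderes} does the work: one proves a comparison principle for the stationary equation \eqref{eq corrector} within the class of sublinear (sub/super)solutions, exploiting coercivity of $G$ together with the strict inequality $\HV(G)(\theta)>\beta\ge\beta V$ to rule out solutions whose gradient degenerates — coercivity forces any sublinear solution to have globally bounded derivative, and then a Perron-type / doubling-of-variables argument with a sublinear penalization gives that two sublinear solutions differ by a constant. Normalizing at $0$ pins the constant, giving uniqueness on $\Omega_\theta$. The Lipschitz bound $\|F_\theta'(\cdot,\omega)\|_\infty\le\kappa(\theta)$ comes from coercivity of $G$ in a quantitative form: since $F_\theta$ solves \eqref{eq corrector} and is sublinear, at a point where $|F_\theta'|$ is large the term $G(\theta+F_\theta')$ would exceed $\HV(G)(\theta)$ (using (G1)), contradicting $a F_\theta''+\beta V\ge 0$ in the viscosity sense along a maximizing sequence; making this rigorous via the standard Ishii--Lions / Bernstein-type argument in Appendix~\ref{pderes} yields a bound depending only on $\alpha_0,\alpha_1,\gamma,\beta,\kappa$ and $\HV(G)(\theta)$, hence locally bounded in $\theta$.

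Finally, stationarity of the gradient follows from uniqueness plus the stationarity of the coefficients. Fix $z\in\R$ and set $\widetilde F(x):=F_\theta(x+z,\omega)-F_\theta(z,\omega)$. Using stationarity of $a$ and $V$ — that is, $a(x+z,\omega)=a(x,\tau_z\omega)$ and $V(x+z,\omega)=V(x,\tau_z\omega)$ — one checks directly that $\widetilde F$ is a sublinear viscosity solution of \eqref{eq corrector} with $\tau_z\omega$ in place of $\omega$ and $\widetilde F(0)=0$. Since $\tau_z\omega\in\Omega_\theta$ by invariance of $\Omega_\theta$, the uniqueness statement forces $\widetilde F(\cdot)=F_\theta(\cdot,\tau_z\omega)$, whence $F_\theta'(\cdot+z,\omega)=F_\theta'(\cdot,\tau_z\omega)$ a.e.\ on $\R$ for every $z$, on a full-measure set. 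The main obstacle is the uniqueness/comparison step for the degenerate stationary equation \eqref{eq corrector} in the sublinear class: without uniform ellipticity one cannot use standard viscous comparison directly, and the argument must genuinely use the coercivity of $G$ and the strict gap $\HV(G)(\theta)>\beta$ to control the behavior at infinity and at points of degeneracy of $a$; this is precisely what is deferred to Appendix~\ref{pderes}.
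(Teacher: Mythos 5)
Your overall strategy matches the paper's: invoke \cite{CaSo17}, verify assumption (H) via the PDE estimates, and combine with the uniqueness/comparison results of Appendix~\ref{pderes} to get uniqueness and stationarity of the gradient. However, there is one point where your argument has a genuine gap. You write that \cite{CaSo17} ``guarantees a measurable selection $\omega\mapsto F_\theta(\cdot,\omega)$''. It does not: Theorem~2.1 of \cite{CaSo17} produces an invariant probability measure $\mu$ on $\Omega\times\Theta$ (times a third trivial factor) concentrated on the set of pairs $(\omega,v)$ with $v$ a sublinear solution, \emph{not} a random variable $\omega\mapsto v$. Turning this measure into a measurable map requires an extra argument that you omit: one first uses uniqueness of sublinear solutions (Corollary~\ref{cor unique solution}, which needs $\theta\ne 0$, itself guaranteed because $\HV(G)(\theta)>\beta=\HV(G)(0)$ by Proposition~\ref{prop Lambda}), and then applies the disintegration theorem to conclude that $\mu_\omega$ is a Dirac mass $\delta_{\hat u(\cdot,\omega)}$, which is what makes $\omega\mapsto\hat u(\cdot,\omega)$ a random variable. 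Without this step you have no measurable choice of corrector, and the rest of the statement does not follow.

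Two smaller inaccuracies. The Lipschitz bound is not obtained by an Ishii--Lions or Bernstein argument in Appendix~\ref{pderes}; no such estimate appears there. It is instead built into the compact space $\Theta=\{v\in\Lip(\R):v(0)=0,\|v'\|_\infty\le\kappa(R)\}$ used in \cite{CaSo17}, and the constant $\kappa(R)$ is precisely the one from condition (H), supplied by the quantitative Lipschitz bounds of \cite{AT} on the resolvent approximations $v_{\lambda,\theta}$. Finally, since \cite{CaSo17} uses the sign convention $-a u''$, the paper works with $\check G(p):=G(-p)$ and considers $u=-F_\theta$, a routine change of variables that your sketch does not mention but should.
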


\begin{proof}
Let us set  $\check G(p):=G(-p)$ for all $p\in\R$. Then equation \eqref{eq general HJ} with $\check G$ in place of $G$ also homogenizes, with effective 
Hamiltonian $\HV(\check G)$ satisfying $\HV(\check G)  (-\theta)=\HV(G)  (\theta)$. 
The function \ $u (x,\omega):=-F_\theta(x,\omega)$\ \ is a viscosity solution to
\begin{equation}\label{eq CS}
-a(x,\omega) u'' +\check G(-\theta+ u ')+\beta V(x,\omega)=\HV(\check G)(-\theta)\qquad\hbox{in $\R$.}
\end{equation}
Hence, it will be enough to prove the assertion for $u$. 
We want to apply Theorem 2.1 in \cite{CaSo17}, which was proved 
under the following\smallskip\\ 
%
%
%
%
{\bf Assumption (H):}  {\em for any $\theta\in\R$, the approximate corrector equation
\begin{equation}\label{eq approximate corrector}
\lambda v_{\lambda,\theta}-a(x,\omega) v_{\lambda,\theta}''+\check G(\theta+ v_{\lambda,\theta}')+\beta V(x,\omega)=0  \qquad\hbox{in $\R$}
\end{equation}
satisfies a comparison principle in $\D{C}_b(\R)$, and, for any $R>0$, there exists a constant $\kappa(R)>0$ such that, if $|\theta|\leqslant R$, 
then the unique bounded solution $v_{\lambda,\theta}$ of \eqref{eq approximate corrector} satisfies} 
\begin{equation}\label{condition H}
\|\lambda v_{\lambda,\theta}\|_\infty+\|v^{'}_{\lambda,\theta}\|_\infty \leqslant \kappa(R)\qquad\hbox{for all $\lambda>0$}. 
\end{equation}

Let us check that assumption (H) holds in our framework. The validity of the required comparison principle for \eqref{eq approximate corrector} is 
guaranteed by \cite[Theorem 2.1]{AT}. Since the functions $\pm C(R)/\lambda$ with 
$C(R):=\beta+\sup_{|\theta|\leqslant R} \check G(\theta)$ are a bounded super- and sub- solution to \eqref{eq approximate corrector}, respectively, we immediately derive by comparison 
that \ $\| \lambda v_{\lambda,\theta}\|_\infty\leqslant C(R)$. This bound, together with the quantitative Lipschitz bounds for $v_{\lambda,\theta}$  provided by 
\cite[Theorem 3.1]{AT}, imply that \eqref{condition H} holds for a suitable nondecreasing function $\kappa:\R\to [0,+\infty)$.

Following \cite{CaSo17}, we  choose $R>|\theta|$ and denote by 
$$
\Theta:=\left\{v\in\Lip(\R)\,\mid\,v(0)=0, \|v'\|_\infty\leqslant \kappa(R)\,\right\}
$$
the metric subspace of $\CC(\R)$. It is easily seen that $\Theta$ is a compact metric space.  
The inequality  $\HV(\check G)  (-\theta)>\beta$ implies $\theta \ne 0$, so, according to Corollary \ref{cor unique solution}, for each fixed 
$\omega\in\Omega$ there is at most one sublinear solution of \eqref{eq CS} in $\Theta$, let us call it $\hat u(\cdot,\omega)$. 
Now note that $-\theta$ is an extremal point of the closed interval $\{\tilde\theta\in\R\,\mid\,\HV(\check G) (\tilde \theta)\leqslant \HV(\check G) (-\theta)\}$, 
for $\HV(\check G) (-\theta)>\beta=\min \HV(\check G)$ and $\HV(\check G)$ is convex. 
In \cite[Theorem 2.1]{CaSo17} the authors have obtained a probability measure $\mu$ on $\Omega\times\Theta$ (we {can} forget about the third coordinate in $\tilde\Omega$ as, in our setting, 
the restriction of $\mu$ on the third coordinate is a Dirac mass at  $\HV(\check G)  (-\theta)$) 
such that $\mu(E_\theta )=1$, where 
\[
E_\theta :=\left\{ (\omega,v)\in\Omega\times\Theta\,\mid\,\hbox{$v$ is a sublinear solution of \eqref{eq CS}}\right\}.
\]
Furthermore, the set $E_\theta $ is invariant under the shifts
$\tilde\tau_z:\ (\omega,v)\mapsto (\tau_z \omega, v(\cdot+z)-v(z))$.
Indeed, if $v\in\Theta$ is a sublinear solution of \eqref{eq CS} for
some $\omega$, then $v(\cdot+z)-v(z)$ belongs to $\Theta$ and is a sublinear solution of
\eqref{eq CS} with $\tau_z\omega$ in place of $\omega$, since
$V(\cdot+z,\omega)=V(\cdot,\tau_z\omega)$ in $\R$. In particular, we
get that $(\omega,v)\in E_\theta $ implies $v=\hat u(\cdot,\omega)$.
Let $\Omega_{\theta}:=\pi_1(E_\theta )$, where
$\pi_1:\Omega\times\Theta\to\Omega$ denotes the standard projection,
and recall that the first marginal of the measure $\mu$ is $\PP$. Then
$\Omega_{\theta}\in\F$ and
$\tau_z(\Omega_{\theta})=\Omega_{\theta}$ for all $z\in\R$, in the
light of what previously remarked.

By making use of the disintegration theorem (see \cite[Theorem
10.2.2]{Dud}) we get that there exists {a family
  of random} probability measures $\mu_\omega$ on $\Theta$ such that
$\mu=\mu_\omega\otimes\PP$, i.e.
\[
\int_{\Omega\times\Theta} \phi(\omega,v)\,d\mu(\omega,v)=\int_\Omega\left( \int_\Theta \phi(\omega,v)\,d\mu_\omega(v)\right)\,d\PP(\omega)
\quad
\hbox{for all $\phi\in\CC(\Omega\times\Theta)$.}
\]
By what observed above, for every $\omega\in\Omega_{\theta}$ the
measure $\mu_\omega$ is the Dirac measure concentrated at
$\hat u(\cdot,\omega)$, hence the map
$\Omega_{\theta}\ni \omega\mapsto \hat u(\cdot,\omega)\in\Theta$ is a
random variable.  The sought random variable $u:\Omega\mapsto\CC(\R)$
is thus obtained by setting
\[
u(\cdot,\omega)=\hat u(\cdot,\omega)\quad\hbox{if $\omega\in\Omega_{\theta}$,}\qquad u(\cdot,\omega)=0\quad\hbox{otherwise.}
\] Lastly, for every $\omega\in\Omega_{\theta}$ and $z\in\R$, we have
$u(\cdot+z,\omega)-u(\cdot,\omega)=u(\cdot,\tau_z\omega)$ in $\R$ in
view of Corollary \ref{cor unique solution}, since both are sublinear
solutions of \eqref{eq CS} with $\tau_z\omega$ in place of
$\omega$. By differentiating this identity we get
$u'(\cdot+z,\omega)=u'(\cdot,\tau_z\omega)$ a.e. in $\R$, for every $z\in\R$
and $\omega\in\Omega_{\theta}$.
\end{proof}

From now on, when we say that a random variable
$\Omega\ni \omega\mapsto F_\theta(\cdot,\omega)\in\D{C}(\R)$ is a {\em
  corrector} for \eqref{eq corrector} we will mean that
$F_\theta(\cdot,\omega)$ is a sublinear, Lipschitz continuous
viscosity solution of \eqref{eq corrector} satisfying
$F_\theta(0,\omega)=0$ for every $\omega\in\Omega_{\theta}$, where
$\Omega_{\theta}$ is a set of probability 1 which is invariant under
the action of $(\tau_z)_{z\in\R}$, with no further specification. In
view of what remarked above, a corrector automatically possesses
stationary gradient. We point out that our arguments below do not use this property.

We are interested in obtaining suitable upper and lower bounds for
$F'_\theta$ depending on $\theta$. We start with the following lemma.

\begin{lemma}\label{lemma linear subsolution}
Let us consider the following viscous Hamilton--Jacobi equation
\begin{equation}\label{eq linear subsolution}
 -a(x,\omega) u'' +\check G(u')+\beta V(x,\omega)=\lambda\qquad\hbox{in $I$,}
\end{equation}
where $\lambda>\beta$ and $I$ is either $(-\infty,y)$ or $(y,+\infty)$ for a fixed $y\in\R$. 
\begin{enumerate}[(i)]
 \item Let $I=(-\infty,y)$ and $a^-_\lambda,b^-_\lambda>0$ such that $G(a^-_\lambda)=\lambda-\beta$, $G(b^{-}_\lambda)=\lambda$. 
 Then the functions 
 \[
  v_-(x):=a_\lambda^- |x-y|=-a_\lambda^- (x-y),
  \qquad
  w_-(x):=b_\lambda^- |x-y|=-b_\lambda^- (x-y)
 \]
are, respectively, a sub- and a super- solution of \eqref{eq linear subsolution} in $I=(-\infty,y)$.\smallskip
\item Let $I=(y,+\infty)$ and $a^+_\lambda, b^+_\lambda>0$ such that $G(-a^+_\lambda)=\lambda-\beta$, $G(-b^{+}_\lambda)=\lambda$. 
 Then the functions 
 \[
  v_+(x):=a_\lambda^+ |x-y|=a_\lambda^+ (x-y),
  \qquad
  w_+(x):=b_\lambda^+ |x-y|=b_\lambda^+ (x-y)
 \]
are, respectively, a sub- and a super- solution of \eqref{eq linear subsolution} in $I=(y,+\infty)$. 
\end{enumerate}
\end{lemma}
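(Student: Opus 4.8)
The statement is entirely a direct verification, so the plan is simply to plug the candidate functions into \eqref{eq linear subsolution} and check the viscosity inequalities. I will treat case (i), the case $I=(-\infty,y)$; case (ii) is symmetric (replace $x-y$ by $y-x$, or equivalently note that the roles of $G$ and $\check G(p)=G(-p)$ get swapped). First I would recall from Proposition \ref{prop Lambda} and the standing hypotheses that $G$ is nonnegative, convex, with $G(0)=0$, hence nondecreasing on $[0,+\infty)$ and coercive by (G1); therefore, since $\lambda>\beta\geqslant 0$ and $\lambda-\beta\geqslant 0$, the positive numbers $a_\lambda^-$ with $G(a_\lambda^-)=\lambda-\beta$ and $b_\lambda^-$ with $G(b_\lambda^-)=\lambda$ exist and satisfy $0\leqslant a_\lambda^-\leqslant b_\lambda^-$ (monotonicity of $G$ on $[0,+\infty)$ together with $\lambda-\beta\leqslant\lambda$).

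Next I would carry out the computation for the smooth functions $v_-(x)=-a_\lambda^-(x-y)$ and $w_-(x)=-b_\lambda^-(x-y)$ on the open interval $I=(-\infty,y)$, where no issue of nondifferentiability arises because both are linear. For $v_-$ one has $v_-''\equiv 0$ and $v_-'\equiv -a_\lambda^-$, so the left-hand side of \eqref{eq linear subsolution} equals
\[
-a(x,\omega)\cdot 0+\check G(-a_\lambda^-)+\beta V(x,\omega)=G(a_\lambda^-)+\beta V(x,\omega)=(\lambda-\beta)+\beta V(x,\omega)\leqslant\lambda,
\]
using $V\leqslant 1$; this is exactly the subsolution inequality. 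For $w_-$ one has $w_-''\equiv 0$, $w_-'\equiv -b_\lambda^-$, and the left-hand side equals
\[
G(b_\lambda^-)+\beta V(x,\omega)=\lambda+\beta V(x,\omega)\geqslant\lambda,
\]
using $V\geqslant 0$; this is the supersolution inequality. Since both functions are $C^\infty$ on the open interval $I$, the classical pointwise inequalities immediately give the viscosity sub-/super-solution property there (the test-function condition is automatic for smooth functions). For case (ii) the same computation with $v_+'\equiv a_\lambda^+$, $w_+'\equiv b_\lambda^+$ gives $\check G(a_\lambda^+)=G(-a_\lambda^+)=\lambda-\beta$ and $\check G(b_\lambda^+)=G(-b_\lambda^+)=\lambda$, and the signs of the $\beta V$ contribution play out in the same way.

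There is essentially no obstacle here; the only points requiring a word of care are (a) checking that the prescribed values $\lambda-\beta$ and $\lambda$ lie in the range of $G$ restricted to $[0,+\infty)$, which follows from $\lambda>\beta\geqslant 0$, the convexity/coercivity of $G$ and $G(0)=0$, and (b) being consistent with the sign conventions so that the argument $\theta+u'$ appearing later in the corrector equation, and the argument $u'$ here, are handled through $\check G(p)=G(-p)$ correctly. Both are routine. I would present the argument compactly, doing case (i) in detail and remarking that (ii) follows by the obvious reflection $x\mapsto 2y-x$ together with $\check G(-p)=G(p)$.
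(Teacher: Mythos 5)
Your proof is correct and follows essentially the same direct computation as the paper: plug the linear functions into the left-hand side, use $\check G(-p)=G(p)$, and apply $0\leqslant V\leqslant 1$ to close the sub-/super-solution inequalities. The extra remarks about the existence of $a_\lambda^\pm,b_\lambda^\pm$ (taken as given in the statement) and about smooth functions satisfying viscosity inequalities classically are fine but not needed beyond what the paper already does.
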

\begin{proof}
Let us prove (i). We have
\[
 -a(x,\omega) (v_-)''(x)+\check G (v_-'(x))+\beta V(x,\omega)
 \leqslant
 \check G(-a^-_\lambda)+\beta
 =\lambda-\beta+\beta=\lambda\quad\hbox{for all $x<y$,} 
\]
showing that $v_-$ is a subsolution of \eqref{eq linear subsolution} in $I=(-\infty,y)$. Analogously,
\[
 -a(x,\omega) (w_-)''(x)+\check G (w_-'(x))+\beta V(x,\omega)
 \geqslant
 \check G(-b^-_\lambda)
 =\lambda\qquad\hbox{for all $x<y$,} 
\]
showing that $w_-$ is a supersolution of \eqref{eq linear subsolution} in $I=(-\infty,y)$. 
The proof of (ii) is similar and is omitted.
\end{proof}

By comparison,  we get the following statement.

\begin{prop}\label{prop inequality corrector}
Let $\theta\in\R$ such that $\HV(G)  (\theta)>\beta$. Set $\lambda:=\HV(G)  (\theta)$. 
For every $y\in\R$ and $\omega\in\Omega_{\theta}$, the following holds:
\begin{enumerate}[(i)]
 \item if $\theta>0$, then 
 \[
  a^-_\lambda (x-y) \geqslant \theta (x-y)+ F_\theta(x,\omega) - F_\theta(y,\omega) \geqslant b^-_\lambda (x-y)\qquad\hbox{for all $x\in (-\infty, y)$,}
 \]
with $b^-_\lambda>a^-_\lambda>0$ such that $G(a^-_\lambda)=\lambda-\beta$, $G(b^{-}_\lambda)=\lambda$;\smallskip
\item if $\theta<0$, then 
 \[
  -a^+_\lambda (x-y) \geqslant \theta (x-y)+ F_\theta(x,\omega) - F_\theta(y,\omega)\geqslant -b^+_\lambda (x-y)\qquad\hbox{for all $x\in (y,+\infty)$,}
 \]
with $b^+_\lambda>a^+_\lambda>0$ such that $G(-a^+_\lambda)=\lambda-\beta$, $G(-b^{+}_\lambda)=\lambda$.
\end{enumerate}
\end{prop}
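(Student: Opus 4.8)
The plan is to prove Proposition \ref{prop inequality corrector} by a direct comparison argument on a half-line, using the sub- and super-solutions from Lemma \ref{lemma linear subsolution} together with the characterization of $F_\theta$ as a sublinear solution of \eqref{eq CS}. Recall from the proof of Proposition \ref{prop CS} that $u(\cdot,\omega):=-F_\theta(\cdot,\omega)$ is, for $\omega\in\Omega_\theta$, a sublinear Lipschitz viscosity solution of
\[
-a(x,\omega)u''+\check G(-\theta+u')+\beta V(x,\omega)=\HV(\check G)(-\theta)=\HV(G)(\theta)=\lambda\qquad\hbox{in $\R$.}
\]
Set $\psi(x):=u(x,\omega)-u(y,\omega)-\theta x=-\big(\theta x+F_\theta(x,\omega)-F_\theta(y,\omega)\big)$ after normalizing at $y$; more precisely, let me work with $g(x):=\theta(x-y)+F_\theta(x,\omega)-F_\theta(y,\omega)$, which satisfies $g(y)=0$ and, since $F_\theta$ is Lipschitz and sublinear, is itself Lipschitz and sublinear. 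In case (i), i.e.\ $\theta>0$, note that the quantity $-g$ is a solution (in $I=(-\infty,y)$) of an equation of the form $-a(-g)''+\check G(-(-g)')+\beta V=\lambda$, i.e.\ $-g$ plays the role of the unknown $u$ in \eqref{eq linear subsolution} after the affine shift; equivalently $g$ solves $-a g''+G(g')+\beta V=\lambda$ in $I$ (using $\check G(p)=G(-p)$). Then Lemma \ref{lemma linear subsolution}(i) provides $v_-(x)=a_\lambda^-|x-y|$ as a subsolution and $w_-(x)=b_\lambda^-|x-y|$ as a supersolution of this equation in $(-\infty,y)$, both vanishing at $y$ just like $g$.

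The core of the argument is then the comparison principle on the half-line $(-\infty,y)$, which I would invoke from the Appendix (Proposition \ref{prop Lip comp}) applied on $I=(-\infty,y)$. First I would check that $w_-(x)=b_\lambda^-|x-y|\ge g(x)$ on $(-\infty,y)$: on the boundary $x=y$ both equal $0$; at $-\infty$, $w_-$ grows linearly with slope $-b_\lambda^-$ while $g$ is sublinear, so $w_- - g\to+\infty$; since $w_-$ is a supersolution and $g$ a (sub)solution of the same equation on $(-\infty,y)$, comparison gives $w_-\ge g$ on all of $(-\infty,y)$, which is exactly the right-hand inequality $\theta(x-y)+F_\theta(x,\omega)-F_\theta(y,\omega)\ge b_\lambda^-(x-y)$ (note $x-y<0$). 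Symmetrically, comparing the subsolution $v_-(x)=a_\lambda^-|x-y|$ with $g$: on $x=y$ equality holds, and at $-\infty$ we need $v_-\le g$, which again follows because $v_-$ has slope $-a_\lambda^-$ (with $a_\lambda^-<b_\lambda^-$, as $G(a_\lambda^-)=\lambda-\beta<\lambda=G(b_\lambda^-)$ and $G$ is nondecreasing on $[0,+\infty)$ by (G3)--(G4) plus $G\ge 0$) and $g$ is sublinear; so $v_-\le g$, giving the left-hand inequality. Case (ii), $\theta<0$, is entirely symmetric after reflecting $x\mapsto -x$ (or, equivalently, working with $\check G$ and the interval $(y,+\infty)$), using Lemma \ref{lemma linear subsolution}(ii); I would just state that the proof is analogous.

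The main obstacle I anticipate is the boundary/growth bookkeeping needed to make the comparison principle applicable on the \emph{unbounded} half-line $(-\infty,y)$: the comparison principle in Proposition \ref{prop Lip comp} is stated for bounded differences, so I need to exploit that $g$ is sublinear while $v_-,w_-$ are exactly linear with nonzero slope to get the ordering "at infinity," and then conclude the ordering everywhere. Concretely, one standard way to handle this cleanly is to compare on a large bounded interval $(-M,y)$: at $x=y$ the functions agree, and for $M$ large the sublinearity of $g$ forces $v_-(-M)\le g(-M)\le w_-(-M)$ (since $v_-(-M),w_-(-M)\sim$ linear in $M$ with the correct signs while $g(-M)=o(M)$); applying the comparison principle on $(-M,y)$ and letting $M\to+\infty$ yields $v_-\le g\le w_-$ on $(-\infty,y)$. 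A second, more minor point is to make sure the affine change of variables relating \eqref{eq CS} (with unknown $-F_\theta$ and nonlinearity $\check G(-\theta+\cdot)$) to \eqref{eq linear subsolution} (with nonlinearity $\check G$) is done consistently, so that $g$ genuinely solves $-a g''+G(g')+\beta V=\lambda$; this is just the substitution $g=\theta(\cdot-y)+F_\theta-F_\theta(y)$ combined with $\check G(p)=G(-p)$, but it must be tracked carefully to get the signs of $a_\lambda^\pm,b_\lambda^\pm$ right. Everything else is routine once these two points are in place.
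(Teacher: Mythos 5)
There is a real gap here, and it starts with a sign/bookkeeping error that propagates through the whole argument. Your function $g(x)=\theta(x-y)+F_\theta(x,\omega)-F_\theta(y,\omega)$ is \emph{not} sublinear: it equals a sublinear function plus the linear term $\theta(x-y)$. Moreover $g$ does \emph{not} solve $-a g''+G(g')+\beta V=\lambda$, since differentiating $g$ and substituting into \eqref{eq corrector} gives $+a\,g''+G(g')+\beta V=\lambda$; equivalently $u:=-g$, not $g$, is the function that plays the role of the unknown in Lemma~\ref{lemma linear subsolution}. Because of this, the comparisons you then try to close are with the wrong sign. The relations one needs are $v_-\le -g\le w_-$ on $(-\infty,y)$, which upon negation give $b^-_\lambda(x-y)\le g(x)\le a^-_\lambda(x-y)$; your claimed $v_-\le g\le w_-$ gives inequalities in the wrong direction, and your boundary check $v_-(-M)\le g(-M)\le w_-(-M)$ is false for large $M$ (indeed for $\theta>0$ one has $g(-M)\approx -\theta M<0$ while $v_-(-M), w_-(-M)>0$). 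You also cite Proposition~\ref{prop Lip comp}, which is the \emph{parabolic} comparison principle; it does not apply to the stationary half-line equation.

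Even if one fixes the sign and compares $-g$ against $v_-$ and $w_-$, the "truncate to $(-M,y)$ and let $M\to\infty$" device you propose is not as routine as you suggest, and it is not what the paper does. For the \emph{upper} bound $-g\le w_-$ the truncation works only if $w_-(-M)\ge -g(-M)$ at the far end, which asymptotically requires $b^-_\lambda\ge\theta$; the paper proves exactly this from $G(b^-_\lambda)=\lambda\ge G(\theta)$ and monotonicity, and then invokes Theorem~\ref{teo comparison}. For the \emph{lower} bound $v_-\le -g$ the truncation would require $a^-_\lambda\le\theta$. That inequality does in fact hold (it follows from the general bound $\HV(G)(\theta)\le G(\theta)+\beta$), but you nowhere observe or prove it, and the paper avoids needing it altogether: it invokes Theorem~\ref{teo comparison bis}, whose one-sided hypothesis $\liminf_{|x|\to\infty} u(x)/(1+|x|)\ge 0$ is met since $u=-g$ blows up linearly to $+\infty$ as $x\to -\infty$, and whose proof rests on the Perron-type sublinear approximation of $v_-$ from Lemma~\ref{lemma sublinear approximation} rather than on controlling the far boundary. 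So beyond the sign errors, the key analytic content — the two tailored stationary comparison theorems and the estimate $b^\mp_\lambda\gtrless\theta$ — is missing from your sketch.
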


\begin{proof}
By Proposition \ref{prop CS}, we have that the function 
\[
u(x):=-\left(\theta x+F_\theta(x,\omega)\right)+\theta y+ F_{\theta}(y,\omega)
\] 
is a Lipschitz continuous solution to \eqref{eq linear subsolution} with $I:=\R$  
satisfying $u(y)=0$.

Let us first  consider the case $\theta>0$. By sublinearity of $F_\theta$, the function $u$ is bounded from below in $I=(-\infty,y)$. 
By Theorem \ref{teo comparison bis} and Lemma \ref{lemma linear subsolution} we have 
\[
-a_\lambda^-(x-y)=v_-(x)\leqslant u(x)=-\big(\theta x+ F_\theta(x,\omega)\big)+\theta y+ F_{\theta}(y,\omega)
\qquad\hbox{for all $x<y$,}
\]
proving the first inequality of assertion (i). To prove the second one, note that the functions 
$\tilde u(x):=u(x)+\theta x$ and 
$\tilde w(x):=w_-(x)+\theta x=-b^-_\lambda(x-y)+\theta x$ are, 
respectively, a sub- and a super- solutions of 
\[
 -a(x,\omega) u'' +\check G(-\theta+u')+\beta V(x,\omega)=\lambda \qquad \hbox{in $I=(-\infty,y).$}
\]
Furthermore, $G(b^-_\lambda)=\lambda=\HV(G)  (\theta)\geqslant G(\theta)$ in view of \eqref{eq HV(G)>G}, so $b^-_\lambda\geqslant \theta >0$ by 
monotonicity of $G$ on $[0,+\infty)$. 
Then the sub- and super- solution $\tilde u$ and $\tilde w$ satisfy the assumption of Theorem \ref{teo comparison}, which gives 
\[
 -F_\theta(x,\omega)+\theta y+ F_{\theta}(y,\omega)=\tilde u(x)\leqslant \tilde w(x)= -b^-_\lambda(x-y)+\theta x\qquad\hbox{for all $x<y$,}
\]
yielding the second inequality in assertion (i). 

Let us now consider the case $\theta<0$. By sublinearity of $F_\theta$, the function $u$ is bounded from below in $I=(y,+\infty)$. 
By Theorem \ref{teo comparison bis} and Lemma \ref{lemma linear subsolution} we have 
\[
a_\lambda^+(x-y)=v_+(x)\leqslant u(x)=-\big(\theta x+ F_\theta(x,\omega)\big)+\theta y+ F_{\theta}(y,\omega)\qquad\hbox{for all $x>y$,}
\]
proving the first inequality of assertion (ii). To prove the second one, we argue as above with 
$\tilde u(x):=u(x)+\theta x$ and $\tilde w(x):=w_+(x)+\theta x$ for $x\in I=(y,+\infty)$.  
Analogously, we have $G(-b^+_\lambda)=\lambda=\HV(G)  (\theta)\geqslant G(\theta)$, so $-b^+_\lambda\leqslant \theta <0$, i.e. 
$b^+_\lambda+\theta\geqslant 0$. Again, via a direct application of Theorem \ref{teo comparison} we get 
\[
 -F_\theta(x,\omega)+\theta y+ F_{\theta}(y,\omega)=\tilde u(x)\leqslant \tilde w(x)= b^+_\lambda(x-y)+\theta x\qquad\hbox{for all $x>y$,}
\]
yielding the second inequality in assertion (ii). 
\end{proof}

From the previous proposition we infer the following result.

\begin{prop}\label{prop gradient bounds corrector}
Let $\theta\in\R$ such that $\HV(G)  (\theta)>\beta$. Set $\lambda:=\HV(G)  (\theta)$. For every $\omega\in\Omega_{\theta}$, the following holds:
\begin{enumerate}[(i)]
 \item if $\theta>0$, then 
 \[
 a^-_\lambda \leqslant \theta +F'_\theta(y,\omega) \leqslant b^-_\lambda\qquad \hbox{for a.e. $y\in\R$,}
 \]
 with $b^-_\lambda>a^-_\lambda>0$ such that $G(a^-_\lambda)=\lambda-\beta$, $G(b^{-}_\lambda)=\lambda$;\medskip
\item if $\theta<0$, then 
 \[
 -b^+_\lambda \leqslant \theta +F'_\theta(y,\omega) \leqslant -a^+_\lambda\qquad \hbox{for a.e. $y\in\R$,}
 \]
 with $b^+_\lambda>a^+_\lambda>0$ such that $G(-a^+_\lambda)=\lambda-\beta$, $G(-b^{+}_\lambda)=\lambda$.
\end{enumerate}
\end{prop}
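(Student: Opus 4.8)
The plan is to deduce Proposition \ref{prop gradient bounds corrector} directly from Proposition \ref{prop inequality corrector} by a difference-quotient argument, exploiting the fact that $F_\theta(\cdot,\omega)$ is Lipschitz (hence differentiable a.e.\ on $\R$) for $\omega\in\Omega_\theta$. Fix $\theta>0$ and $\omega\in\Omega_\theta$, and let $y\in\R$ be a point of differentiability of $F_\theta(\cdot,\omega)$. The inequalities of Proposition \ref{prop inequality corrector}(i), rewritten with the roles of the two free points suitably chosen, say that for every $x<y$
\[
a^-_\lambda(x-y)\ \geqslant\ \theta(x-y)+F_\theta(x,\omega)-F_\theta(y,\omega)\ \geqslant\ b^-_\lambda(x-y).
\]
Since $x-y<0$, dividing through by $x-y$ reverses the inequalities, giving
\[
a^-_\lambda\ \leqslant\ \theta+\frac{F_\theta(x,\omega)-F_\theta(y,\omega)}{x-y}\ \leqslant\ b^-_\lambda\qquad\hbox{for all $x<y$.}
\]
Letting $x\to y^-$ and using that $y$ is a differentiability point yields $a^-_\lambda\leqslant\theta+F'_\theta(y,\omega)\leqslant b^-_\lambda$. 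Because the set of differentiability points has full measure, this proves (i). The case $\theta<0$ is symmetric: one uses Proposition \ref{prop inequality corrector}(ii), which controls the increment for $x>y$, divides by $x-y>0$ (inequalities preserved), and lets $x\to y^+$ to obtain $-b^+_\lambda\leqslant\theta+F'_\theta(y,\omega)\leqslant -a^+_\lambda$ for a.e.\ $y$.

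There is one point that deserves a word of care rather than being a genuine obstacle: Proposition \ref{prop inequality corrector} as stated fixes a base point $y$ and bounds the increment of $x\mapsto\theta x+F_\theta(x,\omega)$ over $x$ to one side of $y$. To extract a pointwise bound on $F'_\theta$ at an \emph{arbitrary} $y$ from the $\theta>0$ statement one needs increments $x\to y^-$, which is exactly what that statement provides; no appeal to the other interval is needed. (Alternatively, one could invoke the stationarity of $F'_\theta$ from Proposition \ref{prop CS}, but that is unnecessary and the paper explicitly avoids relying on it.) So the only ingredients are: Lipschitz regularity of the corrector (Proposition \ref{prop CS}), Rademacher/Lebesgue a.e.\ differentiability in one variable, and the elementary fact that dividing an inequality by a negative number reverses it. I do not expect any real difficulty here; the ``hard part'' was already done in establishing Proposition \ref{prop inequality corrector} via the comparison principles of the Appendix, and the present statement is just its infinitesimal form.
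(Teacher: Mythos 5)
Your argument is exactly the paper's proof: at an a.e.\ point of differentiability, apply the one-sided increment bounds of Proposition~\ref{prop inequality corrector} (for $x<y$ when $\theta>0$, for $x>y$ when $\theta<0$), divide by $x-y$ with the correct sign, and pass to the one-sided limit. Nothing differs in substance from what the paper does.
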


\begin{proof}
  Let $y$ be a differentiability point of $F_\theta(\cdot,\omega)$. If
  $\theta>0$, then from Proposition \ref{prop inequality
    corrector}--(i) we get
\[
a_\lambda^- 
\leqslant 
\lim_{h\to 0^-}\frac{\theta h+ F_\theta(y+h,\omega) - F_\theta(y,\omega) }{h}
\leqslant 
b_\lambda^-
\]
yielding assertion (i). If $\theta<0$, we make use of Proposition \ref{prop inequality corrector}--(ii) and get
\[
-a_\lambda^+ 
\geqslant 
\lim_{h\to 0^+}\frac{\theta h+ F_\theta(y+h,\omega) - F_\theta(y,\omega) }{h}
\geqslant 
-b_\lambda^+, 
\]
yielding assertion (ii). 
\end{proof}

\subsection{Outside the flat part}
In this subsection, we  shall prove the following theorem.
\begin{teorema}\label{teo homogenization via correctors}
\ \smallskip\\
\noindent{\em (a)} Assume either one of the following conditions:
\begin{enumerate}[(i)]
\item \quad $\theta<-c$\quad and\quad $\HV(G^-) (\theta)>\beta$;\smallskip
\item \quad $-c<\theta\leqslant\widehat{p}$\quad and\quad $\beta<\HV(G^-) (\theta)\leqslant G_c(\widehat{p})$.\smallskip
\end{enumerate}
\noindent Then
\[
 \HV^L(G_c) (\theta)=\HV^U(G_c) (\theta)=\HV(G^-) (\theta)=\min\{\HV(G^-) (\theta),\HV(G^+) (\theta) \}. 
\]
\noindent{\em (b)} Assume either one of the following conditions:
\begin{enumerate}[(i)]
\item \quad $\theta>c$\quad and\quad $\HV(G^+) (\theta)>\beta$;\smallskip
\item \quad $\widehat{p}\leqslant \theta<c$\quad and\quad $\beta<\HV(G^+) (\theta)\leqslant G_c(\widehat{p})$.
\end{enumerate}
\noindent Then
\[
 \HV^L(G_c) (\theta)=\HV^U(G_c) (\theta)=\HV(G^+) (\theta)=\min\{\HV(G^-) (\theta),\HV(G^+) (\theta)\}. 
\]
\end{teorema}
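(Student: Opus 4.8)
The plan is to prove part~(a); part~(b) follows by the symmetric argument (replace $p\mapsto -p$ and interchange $G^+$, $G^-$, $c$, $-c$). Recall that the general upper bound \eqref{eq general upper bound} already yields
\[
\HV^U(G_c)(\theta)\ \le\ \min\{\HV(G^-)(\theta),\HV(G^+)(\theta)\}\ \le\ \HV(G^-)(\theta)
\]
for every $\theta\in\R$. Since $\HV^L(G_c)\le\HV^U(G_c)$ always holds, it therefore suffices to prove, under the hypotheses of~(a), the matching lower bound $\HV^L(G_c)(\theta)\ge\HV(G^-)(\theta)$: this forces every inequality in the chain above to be an equality and, in particular, gives $\min\{\HV(G^-)(\theta),\HV(G^+)(\theta)\}=\HV(G^-)(\theta)$, which is exactly the asserted characterization. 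So fix $\theta$ as in~(i) or~(ii); in either case $\HV(G^-)(\theta)>\beta$.

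Set $\lambda:=\HV(G^-)(\theta)$. Applying Proposition~\ref{prop CS} to the translate $\breve G^-(p):=G^-(p-c)$ (which lies in $\Ham$ after adjusting constants and satisfies (G3)--(G4)) and translating back, we obtain, for $\omega$ in a set $\Omega_\theta$ of probability~$1$, a Lipschitz continuous sublinear function $F_\theta(\cdot,\omega)$ with $F_\theta(0,\omega)=0$ solving the corrector equation
\[
a(x,\omega)\,F_\theta''+G^-\!\bigl(\theta+F_\theta'\bigr)+\beta V(x,\omega)=\lambda\qquad\text{in }\R.
\]
The key point is the \emph{gradient bound} $\theta+F_\theta'(\cdot,\omega)\le\widehat p$ a.e.\ in $\R$, and this is where the hypotheses are used. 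Translating Proposition~\ref{prop gradient bounds corrector} to $\breve G^-$ with parameter $\tilde\theta:=\theta+c$: in case~(i) one has $\tilde\theta<0$, and the resulting bound confines $\theta+F_\theta'$ to a set contained in $(-\infty,-c)\subset(-\infty,\widehat p)$, so the claim is immediate; in case~(ii) one has $\tilde\theta>0$ and $\theta+F_\theta'\le b$, where $b\ge -c$ is the smallest solution of $G^-(b)=\lambda$ on the nondecreasing branch $[-c,+\infty)$ of $G^-$. Since $G^-(\widehat p)=G_c(\widehat p)\ge\lambda=G^-(b)$ by \eqref{eq trivial} and the assumption $\HV(G^-)(\theta)\le G_c(\widehat p)$ of~(ii), and since $-c\le b$ and $-c\le 0\le\widehat p$, monotonicity of $G^-$ on $[-c,+\infty)$ forces $b\le\widehat p$, proving the gradient bound.

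Because $G_c=G^-$ on $(-\infty,\widehat p]$ (by the definition of $G_c$ together with \eqref{eq trivial}) and the spatial super- and sub-differentials of a Lipschitz function whose a.e.\ derivative is $\le\widehat p$ lie in $(-\infty,\widehat p]$, the gradient bound shows that $w(t,x):=\theta x+F_\theta(x,\omega)+\lambda t$ --- which solves \eqref{eq general HJ} with $G^-$ --- is also a viscosity solution, in particular a subsolution, of \eqref{eq general HJ} with $G:=G_c$. To compare $w$ with the solution $u_\theta$ of the $G_c$-equation with datum $\theta x$, the sublinear (but possibly unbounded) discrepancy $F_\theta$ at $t=0$ is dealt with by localization: given $\delta>0$, sublinearity of $F_\theta$ provides $C_\delta$ with $F_\theta(x,\omega)\le\delta\sqrt{1+x^2}+C_\delta$; since $x\mapsto\sqrt{1+x^2}$ is convex with first and second derivatives bounded by $1$, subtracting $\delta\sqrt{1+x^2}$ from $w$ perturbs the $G_c$-equation only by an additive error $O(\delta)$ (the relevant slopes remaining in a fixed compact set on which $G_c$ is Lipschitz), so that
\[
w_\delta(t,x):=\theta x+F_\theta(x,\omega)+\lambda t-\delta\sqrt{1+x^2}-C_\delta-C\delta t
\]
is a genuine subsolution of the $G_c$-equation (with $C$ independent of $\delta$) satisfying $w_\delta(0,\cdot)\le\theta x$. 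Proposition~\ref{prop Lip comp} (applicable since $w_\delta$ and $u_\theta$ are Lipschitz in $x$ and $w_\delta-\theta x$ is bounded above on finite time strips) then gives $u_\theta\ge w_\delta$, whence $u_\theta(t,0,\omega)\ge\lambda t-C_\delta-C\delta t$; dividing by $t$, letting $t\to+\infty$ and then $\delta\to0^+$ yields $\HV^L(G_c)(\theta)\ge\lambda$ for every $\omega\in\Omega_\theta$, hence almost surely by Proposition~\ref{prop Lambda}. I expect the gradient bound of the second paragraph --- the observation that, under the stated hypotheses, the corrector built from $G^-$ has its slope confined to the branch where $G_c$ and $G^-$ coincide, so that it doubles as a corrector for $G_c$ --- to be the conceptual heart of the argument; the localization in the last step is routine but cannot be omitted, since the correctors of Proposition~\ref{prop CS} need not be bounded.
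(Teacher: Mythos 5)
Your proof is correct and follows essentially the same route as the paper: you use the gradient bound from Proposition~\ref{prop gradient bounds corrector} (applied to the translate of $G^-$) to confine $\theta+F_\theta'$ to the half-line on which $G_c$ and $G^-$ coincide, so the $G^-$-corrector doubles as a (sub)corrector for $G_c$, and you then apply the $\eps\langle x\rangle$-localization and comparison. The paper packages these steps through Lemma~\ref{lemma corrector} and Lemma~\ref{lemma corrector G^-}, extracting both $\HV^L(G_c)(\theta)\ge\lambda$ and $\HV^U(G_c)(\theta)\le\lambda$ from the corrector via a symmetric sub/super-solution argument and invoking \eqref{eq general upper bound} only to identify $\HV(G^-)(\theta)$ with the minimum; your version economizes by taking the entire upper bound directly from \eqref{eq general upper bound} and using the corrector only for the subsolution side, which is a harmless streamlining of the same idea.
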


The proof of this result is based on a series of lemmas, which we shall prove first. 

\begin{lemma}\label{lemma corrector}
Let $F_\theta:\R\times\Omega\to\R$ be a corrector of the equation
\begin{equation}\label{eq lambda corrector}
 a(x,\omega) u'' +G_c(\theta+u')+\beta V(x,\omega)=\lambda\qquad\hbox{in $\R$}
\end{equation}
for some $\theta\in\R$ and $\lambda\in\R$. Then \ \ $\HV^L(G_c) (\theta)=\HV^U(G_c) (\theta)=\lambda$.
\end{lemma}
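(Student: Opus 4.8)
The plan is to use $F_\theta$ to build both a subsolution and a supersolution of the unscaled equation \eqref{eq general HJ} with linear initial data $\theta x$, then sandwich $u_\theta$ between them and compute the long‑time growth rate. Concretely, set $w^\pm(t,x):=\theta x+F_\theta(x,\omega)+\lambda t\pm C$, where $C$ is chosen so that $w^-(0,x)\le\theta x\le w^+(0,x)$; this is possible because $F_\theta(\cdot,\omega)$ is Lipschitz and vanishes at $0$, hence bounded on compacts, but in fact sublinearity is exactly what makes $|F_\theta(x,\omega)|$ globally controlled well enough that a suitable additive constant works — more precisely, since the corrector satisfies \eqref{eq lambda corrector} exactly, $w^\pm$ are themselves (super/sub)solutions of \eqref{eq general HJ} on $\cyl$, as one checks by plugging in: $\partial_t w^\pm=\lambda$ and $a\,\partial^2_{xx}w^\pm+G_c(\partial_x w^\pm)+\beta V=a F_\theta''+G_c(\theta+F_\theta')+\beta V=\lambda$ in the viscosity sense. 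The only issue is the initial condition, handled by the constant $C$.

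Next I would invoke the comparison principle of Proposition \ref{prop Lip comp} (applied, as in the earlier proofs of this section, to $u_\theta(t,x,\omega)-\theta x$ and $w^\pm(t,x)-\theta x$ with $G_c(\theta+\cdot)$ in place of $G$, noting that $F_\theta(\cdot,\omega)-$\,(linear) is bounded on $\cTcyl$ by sublinearity and Lipschitz continuity of $F_\theta$, and $u_\theta$ is Lipschitz by Proposition \ref{prop Lip estimates}). This gives
\[
w^-(t,x)\le u_\theta(t,x,\omega)\le w^+(t,x)\qquad\text{for all }(t,x)\in\cyl,\ \omega\in\Omega_\theta.
\]
Evaluating at $x=0$, dividing by $t$, and letting $t\to+\infty$, the bounded terms $F_\theta(0,\omega)=0$ and $\pm C$ drop out, so
\[
\lambda=\liminf_{t\to+\infty}\frac{w^-(t,0)}{t}\le \HV^L(G_c)(\theta)\le \HV^U(G_c)(\theta)\le\limsup_{t\to+\infty}\frac{w^+(t,0)}{t}=\lambda,
\]
whence $\HV^L(G_c)(\theta)=\HV^U(G_c)(\theta)=\lambda$. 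Since this holds for $\omega$ in the set $\Omega_\theta$ of probability $1$ attached to the corrector, and since $\HV^{L},\HV^{U}$ are a.s.\ constant by Proposition \ref{prop Lambda}, the conclusion follows.

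The main obstacle — though a mild one — is justifying that $w^\pm$ really are global (super/sub)solutions and that the comparison principle applies on the unbounded domain: this requires that $w^\pm(t,x)-\theta x=F_\theta(x,\omega)+\lambda t\pm C$ be bounded on $\cTcyl$ for each fixed $T$, which follows from $F_\theta$ being Lipschitz (hence at most linear), combined with the form of Proposition \ref{prop Lip comp}, which is tailored precisely to compare functions differing from the linear datum by a bounded amount. No subtlety arises from the nonconvexity of $G_c$ here, since $F_\theta$ is assumed to solve the $G_c$-equation \eqref{eq lambda corrector} outright; the work of producing such an $F_\theta$ (and identifying $\lambda$) is deferred to the subsequent lemmas and to Theorem \ref{teo homogenization via correctors}.
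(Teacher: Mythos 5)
The central step of your argument — choosing a single additive constant $C$ so that $w^-(0,x)=\theta x+F_\theta(x,\omega)-C\le\theta x\le \theta x+F_\theta(x,\omega)+C=w^+(0,x)$ for all $x\in\R$ — requires $F_\theta(\cdot,\omega)$ to be \emph{bounded} on $\R$. But a corrector is only assumed to be \emph{sublinear} (together with Lipschitz and normalized at $0$), and sublinear Lipschitz functions can be unbounded, e.g.\ $x\mapsto\sqrt{1+x^2}-1$. You explicitly invoke sublinearity to justify this step (``sublinearity is exactly what makes $|F_\theta(x,\omega)|$ globally controlled well enough that a suitable additive constant works''), but sublinearity controls the ratio $F_\theta(x)/(1+|x|)$, not $\sup_x|F_\theta(x)|$, so this reasoning is incorrect. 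The subsequent claim that $w^\pm(t,x)-\theta x$ is bounded on $[0,T]\times\R$ ``because $F_\theta$ is Lipschitz (hence at most linear)'' is likewise false — at most linear is not bounded — and, incidentally, Proposition~\ref{prop Lip comp} does not demand boundedness, only the sublinear-growth condition~\eqref{hyp 2} on the difference from the linear datum.

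The paper's proof patches exactly this gap by perturbing with a small strictly linear term rather than a constant: it sets $v^\eps(t,x):=\theta x+F_\theta(x,\omega)+\lambda t-\eps\langle x\rangle-k_\eps$ with $\langle x\rangle=\sqrt{1+x^2}$, which dominates the sublinear $F_\theta$ at infinity so that $v^\eps(0,\cdot)\le\theta x$ can be arranged with a finite $k_\eps$. The price is that $v^\eps$ is no longer an exact solution; one checks that it is a subsolution of \eqref{eq general HJ} up to an error of size $C(\theta)\eps$ (using the global Lipschitz bound on $F_\theta'$ to control how $G_c(\theta+F_\theta'-\eps x/\langle x\rangle)$ differs from $G_c(\theta+F_\theta')$), and this error is absorbed by replacing $\lambda t$ with $(\lambda-C(\theta)\eps)t$. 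Comparison then gives $\HV^L(G_c)(\theta)\ge\lambda-C(\theta)\eps$, and sending $\eps\to0^+$ finishes the lower bound; the upper bound is symmetric. If you add the $\eps\langle x\rangle$ perturbation and the accompanying $\eps\to0$ limit, your argument becomes essentially the paper's.
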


\begin{proof}
According to Proposition \ref{prop CS}, we know that $F_{\theta}$ is globally Lipschitz on $\R$. Then the function 
\[
 v(t,x):=\theta x + F_{\theta}(x,\omega)+\lambda\,t
\]
is a solution to \eqref{eq general HJ} with $G:=G_c$ and 
initial condition $v(0,x)=F_{\theta}(x,\omega)+\theta x$. Fix $\eps>0$ and choose a constant $k_\eps>0$ large enough so that the function
\[
 v^\eps(t,x)=v(t,x)-\eps\langle x\rangle-k_\eps\qquad \hbox{where}\quad \langle x \rangle :=\sqrt{1+|x|^2}
\]
satisfies $v^\eps(0,x)=F_{\theta}(x,\omega)+\theta x-\eps\langle x\rangle-k_\eps\leqslant \theta x$\quad in $\R$. This is possible since the function 
$F_{\theta}$ has sublinear growth. Now
\[
 \partial_x v^\eps(t,x)=F_{\theta}'+\theta-\eps\frac{x}{\langle x \rangle},\qquad 
 \partial^2_{xx} v^\eps(t,x)=F_{\theta}''-\frac{\eps}{\langle x \rangle^3}
\]
and 
\begin{eqnarray*}
a(x,\omega) \left(\partial^2_x v^\eps\right)&+& G_c\left( \partial_x v^\eps\right)+\beta V(x,\omega)\\
&=&
\big(-\frac{\eps}{\langle x \rangle^3}+ F''_{\theta }\big)a(x,\omega) +G_c\left(\theta+F'_{\theta}-\frac{\eps x}{\langle x \rangle}\right)+\beta V(x,\omega)=:A.
\end{eqnarray*}
From the fact that $|F'_{\theta}|$ is bounded on $\R$ we infer that there exists a constant $C(\theta)$ such that
\[
 A
 \geqslant 
 -C(\theta)\eps+a(x,\omega)  F''+G_c(\theta+F'_{\theta})+\beta V(x,\omega)
 =
 -C(\theta)\eps+\lambda.
\]
This means that the function $\tilde v^\eps(t,x)=v^\eps(t,x)-C(\theta)\eps t$ is a subsolution of \eqref{eq general HJ} with $\tilde v^\eps(0,x)\leqslant \theta x$, hence by comparison
we infer 
\[
 u_\theta(t,x)\geqslant \tilde v^\eps(t,x)\qquad\hbox{for all $(t,x)\in\ccyl$.}
\]
So 
\[
\HV^L(G_c) (\theta)
=
\liminf_{t\to +\infty}\ \frac{u_\theta(t,0,\omega)}{t}
\geqslant
\liminf_{t\to +\infty}\ \frac{\tilde v^\eps(t,0,\omega)}{t}
=
\lambda-C(\theta)\eps.
\]
By letting $\eps\to 0^+$ we obtain the lower bound $\HV^L(G_c) (\theta)\geqslant \lambda$. 
A similar argument gives the upper bound $\HV^U(G_c) (\theta)\leqslant \lambda$, thus proving the assertion.   
\end{proof}


\begin{lemma}\label{lemma corrector G^-}
Let $F^-_{\theta}:\R\times\Omega\to\R$ be a corrector of the equation
\begin{equation}\label{eq corrector G^-}
a(x,\omega) u'' +G^-(\theta+u')+\beta V(x,\omega)=\HV(G^-) (\theta)\qquad\hbox{in $\R$.}
\end{equation}
Assume either one of the following conditions:
\begin{enumerate}[(i)]
\item \quad $\theta<-c$\quad and\quad $\HV(G^-) (\theta)>\beta$;\smallskip
\item \quad $\theta > -c$\quad and\quad $\beta<\HV(G^-) (\theta)\leqslant G^-(\widehat{p})$.
\end{enumerate}
Then for $\PP$--a.e. $\omega\in\Omega$ we have 
\[
\theta+(F^-_{\theta})'(x,\omega) \leqslant \widehat{p}\quad \hbox{for a.e. $x\in\R$}. 
\]
\end{lemma}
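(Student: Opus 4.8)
The plan is to reduce $G^-$ — which is convex, non-negative and attains its minimum value $0$ at $-c$ — to a convex function with minimum at the origin, apply the gradient bounds of Proposition~\ref{prop gradient bounds corrector}, and then translate the conclusion back. First I would set $\bar G(p):=G^-(p-c)$, so that $\bar G$ is convex, non-negative, lies in $\Ham$ (up to a harmless adjustment of the structural constants) and satisfies $\bar G(0)=0=\min\bar G$; and I would put $\bar\theta:=\theta+c$ and $\lambda:=\HV(G^-)(\theta)$. From the identity $G^-(\theta+q)=\bar G(\bar\theta+q)$, the given corrector $F^-_\theta$ is also a corrector of
\[
a(x,\omega)\,u''+\bar G(\bar\theta+u')+\beta V(x,\omega)=\lambda\qquad\hbox{in $\R$;}
\]
moreover, arguing as in Proposition~\ref{prop reduction} (adding $c\,x$ to solutions) one checks $\HV(\bar G)(\bar\theta)=\HV(G^-)(\theta)=\lambda$. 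In both cases (i) and (ii) we have $\lambda>\beta\geqslant0$, so Proposition~\ref{prop gradient bounds corrector} applies to $\bar G$ at slope $\bar\theta$ and gives, for every $\omega$ in the full-probability set $\Omega_\theta$ attached to $F^-_\theta$, a two-sided a.e.\ bound on $\bar\theta+(F^-_\theta)'(\cdot,\omega)$ on $\R$.

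In case (i) we have $\theta<-c$, hence $\bar\theta<0$, so Proposition~\ref{prop gradient bounds corrector}(ii) would give $\bar\theta+(F^-_\theta)'(x,\omega)\leqslant -a^+_\lambda$ for a.e.\ $x$, with $a^+_\lambda>0$. Subtracting $c$ and using $\widehat p\in[-c,c]$ (so $\widehat p\geqslant -c$), I would obtain
\[
\theta+(F^-_\theta)'(x,\omega)=\bar\theta+(F^-_\theta)'(x,\omega)-c\leqslant -a^+_\lambda-c<-c\leqslant\widehat p\qquad\hbox{a.e.\ on $\R$,}
\]
which is the claim.

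In case (ii) we have $\theta>-c$, hence $\bar\theta>0$, so Proposition~\ref{prop gradient bounds corrector}(i) would give $\bar\theta+(F^-_\theta)'(x,\omega)\leqslant b^-_\lambda$ for a.e.\ $x$, with $b^-_\lambda>0$ and $\bar G(b^-_\lambda)=\lambda$. It then remains to show $b^-_\lambda\leqslant\widehat p+c$. From the hypothesis, $\bar G(\widehat p+c)=G^-(\widehat p)\geqslant\lambda=\bar G(b^-_\lambda)$, and $\widehat p+c\geqslant0$; since $\bar G$ is convex with $\bar G(0)=0<\lambda$, for $t\in[0,b^-_\lambda)$ convexity along $[0,b^-_\lambda]$ gives $\bar G(t)\leqslant(t/b^-_\lambda)\lambda<\lambda$, so any $t\geqslant0$ with $\bar G(t)\geqslant\lambda$ must satisfy $t\geqslant b^-_\lambda$; in particular $\widehat p+c\geqslant b^-_\lambda$. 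Subtracting $c$ gives $\theta+(F^-_\theta)'(x,\omega)\leqslant b^-_\lambda-c\leqslant\widehat p$ a.e., as required.

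The only delicate points are keeping the translation consistent — the slope shifts by $c$ while the effective Hamiltonian is unchanged — and the elementary convexity step in case (ii) that converts $\bar G(b^-_\lambda)\leqslant\bar G(\widehat p+c)$ into $b^-_\lambda\leqslant\widehat p+c$, which genuinely uses that $\lambda$ lies strictly above $\min\bar G$ (so that $b^-_\lambda$ sits on the strictly increasing branch of $\bar G$). Everything else is routine bookkeeping on top of the gradient estimates already established.
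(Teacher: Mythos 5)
Your proof is correct and follows essentially the same route as the paper's: shift $G^-$ to $\bar G(\cdot)=G^-(\cdot-c)$ with minimum at the origin, apply Proposition~\ref{prop gradient bounds corrector} at slope $\bar\theta=\theta+c$, and translate back, using $\widehat p\in[-c,c]$ (the paper uses $\widehat p\ge 0$ where you use $\widehat p\ge -c$, but both are available after the normalization). The only stylistic difference is that you spell out the convexity argument behind ``$G^-(b^-_\lambda-c)\le G^-(\widehat p)$ implies $b^-_\lambda-c\le\widehat p$,'' which the paper compresses into the word ``monotonicity.''
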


\begin{oss}
Note that the inequality $\HV(G^-)(\theta)\leqslant G^-(\widehat{p})$ for $\theta>-c$ implies $\theta\leqslant \widehat{p}$. This follows from the fact that  
$\HV(G^-)\geqslant G^-$ on $\R$ and $G^-$ is nondecreasing on $[-c,+\infty)$.   
\end{oss}

\begin{proof}
We will make use of Proposition \ref{prop gradient bounds corrector} with $G(\cdot):=G^-(\cdot-c)$, $\HV(G)(\cdot)  :=\HV(G^-)(\cdot-c)$ and 
$\theta+c$ in place of $\theta$. Consequently, we will have  
$\lambda:=\HV(G^-) (\theta)$ and $F_{\theta+c}=F^-_\theta$.

\indent (i) The inequality $\theta+c<0$ means \quad  $\theta+c +F'_{\theta+c}(x,\omega) \leqslant -a^+_\lambda$ {for a.e. $x\in\R$} \quad with $a^+_\lambda>0$ 
such that $G^-(-a^+_\lambda-c)=\lambda-\beta>0$, so 
\[
\theta +F'_{\theta+c}(x,\omega) \leqslant -a^+_\lambda-c<0\leqslant \widehat{p}\qquad\hbox{for a.e. $x\in\R$}.
\]
\indent (ii) The inequality $\theta+c>0$ means  \quad $\theta+c +F'_{\theta+c}(x,\omega) \leqslant b^-_\lambda$ {for a.e. $x\in\R$} with 
$b^-_\lambda>0$ such that $G^-(b^{-}_\lambda-c)=\lambda$. Now  \ $G^-(\widehat{p})\geqslant \lambda=G^-(b^{-}_\lambda-c)$, 
 \  so $b^{-}_\lambda\leqslant \widehat{p}+c$ by monotonicity of $G^-$ on $[-c,+\infty)$, yielding 
\[
\theta+F'_{\theta+c}(x,\omega) \leqslant \widehat{p}\qquad\hbox{for a.e. $x\in\R$}.
\]
\end{proof}


\begin{lemma}\label{lemma corrector G^+}
Let $F^+_{\theta }:\R\times\Omega\to\R$ be a corrector of the equation
\begin{equation}\label{eq corrector G^+}
a(x,\omega) u'' +G^+(\theta+u')+\beta V(x,\omega)=\HV(G^+) (\theta)\qquad\hbox{in $\R$.}
\end{equation}
Assume either one of the following conditions:
\begin{enumerate}[(i)]
\item \quad $\theta>c$\quad and\quad $\HV(G^+) (\theta)>\beta$;\smallskip
\item \quad $\theta<c$\quad and\quad $\beta<\HV(G^+) (\theta)\leqslant G^+(\widehat{p})$.
\end{enumerate}
Then for $\PP$--a.e. $\omega\in\Omega$ we have 
\[
\theta+(F^+_{\theta })'(x,\omega) \geqslant \widehat{p}\quad \hbox{for a.e. $x\in\R$}. 
\]
\end{lemma}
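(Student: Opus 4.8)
The plan is to reduce this statement to Lemma \ref{lemma corrector G^-} by a reflection argument, mirroring the reduction already used repeatedly in the paper (e.g.\ in Proposition \ref{prop reduction} and in the reduction to $\widehat p\geqslant 0$). First I would introduce the reflected function $\check G^\pm(p):=G^\pm(-p)$ and observe that $\check G^+$ is a convex, non-negative function in $\Ham$ with $\check G^+(-c)=0$, while $\check G^-(c)=0$; moreover $\check G^+\wedge\check G^-$ is the reflection of $G^-\wedge G^+$, so its ``crossover point'' is $-\widehat p$ (with the roles of the $-$ and $+$ branches swapped, since reflection reverses the order on $\R$). By the stationarity assumptions on $a$ and $V$, the change of variables $x\mapsto -x$ turns correctors of \eqref{eq corrector G^+} into correctors of the analogous equation for $\check G^+$ at argument $-\theta$, with the same right-hand side $\HV(\check G^+)(-\theta)=\HV(G^+)(\theta)$ (this equality of effective Hamiltonians under reflection is exactly the identity $\HV(\check G)(-\theta)=\HV(G)(\theta)$ already recorded in the proof of Proposition \ref{prop CS}).

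Next I would translate the hypotheses. Hypothesis (i), $\theta>c$ and $\HV(G^+)(\theta)>\beta$, becomes $-\theta<-c$ and $\HV(\check G^+)(-\theta)>\beta$, which is precisely hypothesis (i) of Lemma \ref{lemma corrector G^-} applied with $\check G^+$ playing the role of $G^-$ there (note $\check G^+$ vanishes at $-c$, so it is a legitimate ``$G^-$''). Similarly hypothesis (ii), $\theta<c$ and $\beta<\HV(G^+)(\theta)\leqslant G^+(\widehat p)$, becomes $-\theta>-c$ and $\beta<\HV(\check G^+)(-\theta)\leqslant\check G^+(-\widehat p)$, which is hypothesis (ii) of Lemma \ref{lemma corrector G^-} with crossover point $-\widehat p$. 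Applying Lemma \ref{lemma corrector G^-} to $\check G^+$ yields, for $\PP$-a.e.\ $\omega$,
\[
-\theta+(\check F_{-\theta})'(x,\omega)\leqslant -\widehat p\qquad\text{for a.e.\ }x\in\R,
\]
where $\check F_{-\theta}(x,\omega):=F^+_\theta(-x,\omega)$ is the corresponding corrector. Since $(\check F_{-\theta})'(x,\omega)=-(F^+_\theta)'(-x,\omega)$, this rearranges to $\theta+(F^+_\theta)'(-x,\omega)\geqslant\widehat p$ for a.e.\ $x$, i.e.\ $\theta+(F^+_\theta)'(y,\omega)\geqslant\widehat p$ for a.e.\ $y\in\R$, which is the claim.

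Alternatively, and perhaps more transparently, one can bypass the reflection bookkeeping and argue directly in complete parallel with Lemma \ref{lemma corrector G^-}, using Proposition \ref{prop gradient bounds corrector} applied with $G(\cdot):=G^+(\cdot+c)$, $\HV(G)(\cdot):=\HV(G^+)(\cdot+c)$, $\lambda:=\HV(G^+)(\theta)$, and $\theta-c$ in place of $\theta$, so that $F_{\theta-c}=F^+_\theta$. In case (i), $\theta-c>0$ gives $\theta-c+F'_{\theta-c}\geqslant a^-_\lambda$ with $G^+(a^-_\lambda+c)=\lambda-\beta>0$, hence $\theta+(F^+_\theta)'\geqslant a^-_\lambda+c>0\geqslant\widehat p$. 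In case (ii), $\theta-c<0$ gives $\theta-c+F'_{\theta-c}\geqslant -b^+_\lambda$ with $G^+(-b^+_\lambda+c)=\lambda\leqslant G^+(\widehat p)$; since $G^+$ is nondecreasing on $[-c,+\infty)$ (recall $c_+=-c_-=c$ and $\widehat p\in[-c,c]$, and $G^+$ is convex with $G^+(c)=0$, hence nondecreasing on $[c,+\infty)$ and — being the right branch — the relevant monotonicity holds on $[\widehat p,+\infty)$), the inequality $G^+(-b^+_\lambda+c)\leqslant G^+(\widehat p)$ forces $-b^+_\lambda+c\leqslant\widehat p$, i.e.\ $-b^+_\lambda\geqslant\widehat p-c$, giving $\theta+(F^+_\theta)'\geqslant\widehat p$. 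I do not expect any genuine obstacle here: the only point requiring a moment's care is getting the direction of the monotonicity argument in case (ii) right, and checking that $G^+(\widehat p)=G^+(-b^+_\lambda+c)$ indeed lies in the range where $G^+$ is monotone — this is ensured by $\widehat p\geqslant 0$ (from the reduction preceding this section) together with convexity of $G^+$ and $G^+(c)=0$.
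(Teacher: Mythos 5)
Your second (direct) alternative is indeed the approach the paper takes, but the monotonicity step in case (ii) is genuinely wrong as written.  Since $G^+$ is a non-negative convex function with $G^+(c)=0$, its minimum is at $c$: it is \emph{nonincreasing} on $(-\infty,c]$ and nondecreasing on $[c,+\infty)$.  Your claim that "$G^+$ is nondecreasing on $[-c,+\infty)$" (and that the relevant monotonicity "holds on $[\widehat p,+\infty)$") is false, and it is precisely on the interval $(-\infty,c]$ — where both $\widehat p$ and $-b^+_\lambda+c$ live — that the comparison of $G^+$-values must be made.  From $G^+(-b^+_\lambda+c)=\lambda\le G^+(\widehat p)$ and the fact that $G^+$ is nonincreasing there, the correct conclusion is $-b^+_\lambda+c\ge\widehat p$, which directly gives $\theta+(F_\theta^+)'\ge -b^+_\lambda+c\ge\widehat p$.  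In your write-up, the wrong ("nondecreasing") monotonicity yields the false intermediate inequality $-b^+_\lambda+c\le\widehat p$, and you then rearrange that incorrectly to $-b^+_\lambda\ge\widehat p-c$ — the two errors cancel and you land on the right final answer, but the reasoning as stated does not hold up.  There is also a smaller slip in case (i): the chain should end "$\ge a^-_\lambda+c>c\ge\widehat p$" (the paper uses $c\ge\widehat p$, which always holds; $0\ge\widehat p$ holds only when $\widehat p=0$).  Finally, the closing remark that the required monotonicity "is ensured by $\widehat p\ge 0$" is off-target: what matters is where the minimum of $G^+$ sits (namely at $c$), not whether $\widehat p\ge0$.

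Your first (reflection) alternative is a reasonable strategy, and the paper does use the identity $\HV(\check G)(-\theta)=\HV(G)(\theta)$ elsewhere.  But to make it rigorous you must track that reflecting $x\mapsto -x$ also reflects $a$ and $V$ (so the coefficients become $\check a(x)=a(-x)$, $\check V(x)=V(-x)$, which still satisfy all the structural assumptions), and — more importantly — that the crossover point in the reflected picture is $-\widehat p\le0$, while the standing reduction in the paper has $\widehat p\ge0$.  The paper's proof of Lemma~\ref{lemma corrector G^-}(i) invokes $\widehat p\ge0$ explicitly in its final chain, so you would need to either rework that step to use $-c\le\widehat p$ instead, or otherwise check that the lemma continues to hold with a negative crossover point.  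As presented, your reflection sketch is plausible but not yet airtight.
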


\begin{oss}
Note that the inequality $\HV(G^+)(\theta )\leqslant G^+(\widehat{p})$ for $\theta<c$ implies $\theta\geqslant \widehat{p}$. This follows from the fact that  
$\HV(G^+)\geqslant G^+$ on $\R$ and $G^+$ is nonincreasing on $(-\infty,c]$.   
\end{oss}

\begin{proof}
We will make use of Proposition \ref{prop gradient bounds corrector} with $G(\cdot):=G^+(\cdot+c)$, $\HV(G)(\cdot):=\HV(G^+)(\cdot+c) $ and $\theta-c$ 
in place of $\theta$. Consequently, we will have $\lambda:=\HV(G^+) (\theta)$ and $F_{\theta-c}=F^+_\theta$. 

\indent (i) The inequality $\theta-c >0$ means  \quad $\theta-c +F'_{\theta-c}(x,\omega) \geqslant a^-_\lambda$ {for a.e. $x\in\R$} with 
$a^-_\lambda>0$ such that $G^+(a^{-}_\lambda+c)=\lambda-\beta>0$, so
\[
\theta +F'_{\theta-c}(x,\omega) \geqslant c+a^-_\lambda >c\geqslant \widehat{p}\qquad\hbox{for a.e. $x\in\R$}.
\]
\indent (ii) The inequality $\theta-c<0$ means \quad  $\theta-c +F'_{\theta-c}(x,\omega) \geqslant -b^+_\lambda$ {for a.e. $x\in\R$} with $b^+_\lambda>0$ 
such that $G^+(-b^+_\lambda+c)=\lambda>0$. Now \ \ $G^+(\widehat{p})\geqslant \lambda=G^+(-b^{+}_\lambda+c)$, 
\ \  so $0>-b^{+}_\lambda+c\geqslant \widehat{p}$ by monotonicity of $G^+$ on $(-\infty,c]$, yielding 
\[
\theta+F'_{\theta-c}(x,\omega) \geqslant \widehat{p}\qquad\hbox{for a.e. $x\in\R$}.
\]
\end{proof}

We are now ready to prove Theorem \ref{teo homogenization via correctors}.

\begin{proof}[Proof of Theorem \ref{teo homogenization via correctors}]
(a) Let $F^-_{\theta}$ be a corrector of equation \eqref{eq corrector G^-}. According to Lemma \ref{lemma corrector G^-}, 
\[
\theta+(F^-_{\theta})' \leqslant \widehat{p}\qquad\hbox{for a.e. $x\in\R$.}
\]
This implies that any $C^2$ sub or supertangent $\varphi$ to $F^-_{\theta}$ at some $x_0\in\R$ will satisfy \ \ $\theta+\varphi'(x_0)\leqslant \widehat{p}$, 
hence $G^-(\theta+\varphi'(x_0))=G_c(\theta+\varphi'(x_0))$. We derive from this that $F^-_{\theta}$ is a corrector of equation \eqref{eq lambda corrector} with 
$\lambda:=\HV(G^-) (\theta)$. In view of Lemma \ref{lemma corrector} and of the upper bound \eqref{eq general upper bound}, we get the assertion. 
The proof of item (b) is similar. 
\end{proof}

\section{Proof of Theorem \ref{teo main 1}}\label{sez teo main 1}


In this section we prove Theorem \ref{teo main 1}.
When $\theta\leqslant -c$, we have 
\[
  \HV^L(G_c)(\theta)= \HV^U(G_c)(\theta)=\HV(G^-) (\theta).
\]
This is a direct consequence of Theorem \ref{teo homogenization via correctors} when $\HV(G^-) (\theta )>\beta$, but it is also 
true when $\HV(G^-) (\theta )=\beta$ in view of the lower bound \eqref{eq lower bound 1} and the general upper bound 
\eqref{eq general upper bound}. 
When $\theta\geqslant c$, a similar argument yields 
\[
  \HV^L(G_c)(\theta)= \HV^U(G_c)(\theta)=\HV(G^+) (\theta ).
\]
When $\theta \in (-c,c)$, we have to proceed differently according to whether 
$\beta\geqslant G_c(\widehat{p})$ or $\beta< G_c(\widehat{p})$. 

\subsection{The case $\beta\geqslant G_c(\widehat{p})$}

When $|\theta|< c$, the lower bound \eqref{eq lower bound 1} matches with the upper bound \eqref{eq upper bound 1}, hence we get 
\[
 \HV^L(G_c)(\theta)= \HV^U(G_c)(\theta)=\beta. 
\]
We have thus shown that 
\begin{equation*}  
\HV (G_c)(\theta)=
  \begin{cases}
    \HV (G^+)(\theta)&\text{if}\quad \theta> c\\
    \beta &\text{if }\quad -c\leqslant \theta\leqslant c\\ 
    \HV (G^-)(\theta)&\text{if}\quad \theta< -c.
  \end{cases}
\end{equation*}
In other words, $\HV (G_c)(\theta)$ is the (lower) convex envelope
of $\HV (G^-)(\theta)$ and $\HV (G^+)(\theta)$.
\subsection{The case $\beta< G_c(\widehat{p})$} 
Let  $\theta\in (-c,\widehat{p}]$. 
If  
$\beta<\HV(G^-) (\theta )\leqslant G_c(\widehat{p})$, Theorem \ref{teo homogenization via correctors} yields  
\begin{equation}\label{strong control step 1}
\HV^L(G_c)(\theta)= \HV^U(G_c)(\theta)=\HV(G^-) (\theta ).
\end{equation}
Let us now consider the case $\HV(G^-) (\theta )> G_c(\widehat{p})$. We first remark that 
\[
\HVzero(G^-)(\theta )=G^-(\theta )=G_c(\theta)\leqslant G_c(\widehat{p}).
\]
By Proposition \ref{prop Lambda}, we know that the map 
$\tilde\beta\mapsto \HVtilde^L(G)(\theta )$  is continuous and nondecreasing on $[0,+\infty)$ with  
$G:=G^-$ or $G:=G_c$.  
We infer that there exists a $\beta_-\in [0,\beta)$ such that 
$\HVminus(G^-)(\theta )=G_c(\widehat{p})$, hence by the previous step we get that 
\eqref{strong control step 1} holds with $\beta_-$ in place of $\beta$. By monotonicity  
we get
\[
 \HV^L(G_c)(\theta) \geqslant \HVminus^L(G_c)(\theta)=\HVminus(G^-)(\theta )=G_c(\widehat{p}).
\]
By taking into account the upper bound \eqref{eq upper bound 1}, we conclude that 
\[
\HV^U(G_c)(\theta)=\HV^L(G_c)(\theta)=G_c(\widehat{p}).
\]
When $\theta \in [\widehat{p}, c)$, arguing analogously we get 
\begin{eqnarray*}
\HV^U(G_c)(\theta)=\HV^L(G_c)(\theta)=
\begin{cases}
\HV(G^+) (\theta ) & \ \hbox{if \quad $\beta<\HV(G^+) (\theta )\leqslant G_c(\widehat{p})$}\\
G_c(\widehat{p}) & \ \hbox{if \quad $G_c(\widehat{p})< \HV(G^+) (\theta )$.}
\end{cases}
\end{eqnarray*}
We have thus shown that 
\begin{equation*}  
\HV (G_c)(\theta )=
  \begin{cases}
    \HV (G^+)(\theta )&\text{if}\quad \theta >{\theta} _+\\
    G_c(\widehat{p})&\text{if }\quad{\theta} _{-}\leqslant \theta \leqslant{\theta} _+\\ 
    \HV (G^-)(\theta )&\text{if}\quad \theta < {{\theta} _{-}},
  \end{cases}
\end{equation*}
where ${\theta}_+$ (resp.\ ${\theta}_-$) is the unique
solution in $[\widehat{p}, c)$ (resp.\ $(-c,\widehat{p}]$) of the
equation \[
\HV (G^+)(\theta) =G_c(\widehat{p})\qquad (\hbox{resp. \ \ $\HV (G^-)(\theta) =G_c(\widehat{p})$}\ ).
\]
Indeed, $\HV (G^+)(\widehat{p})\geqslant  G^+(\widehat{p})=G_c(\widehat{p})>\beta=\HV (G^+)(c)$, hence the existence and uniqueness of such 
a $\theta_+$ follows from the convexity of $\theta\mapsto\HV (G^+)(\theta)$. The reasoning for $\theta_-$ is analogous.

\section{Proof of Theorem~\ref{teo main 2}}\label{sez teo main 2}

Throughout this section we assume that
$V:\R\times\Omega\to [0,+\infty)$ is a stationary potential satisfying
(V1)--(V2). We start with a proposition, which is the key observation
needed for the proof of Theorem~\ref{teo main 2}. This proposition states
that in our  setting {\em homogenization} commutes with
{\em convexification} (i.e.\ taking the convex envelope of the momentum
part of the original Hamiltonian).

Given a function $h:\R\to\R$, we shall denote by $\conv(h)$ its (lower) convex
envelope
\[\conv(h)(p):=\sup\{g(p):\,g\text{ is convex and }\forall
  x\in\R,\ g(x)\le h(x)\},\quad \forall p\in\R.\]
\begin{prop}\label{commute}
Let $c_+\ge c_-$ and $G^\pm\in{\cal H}(\gamma,\alpha_0,\alpha_1)$ be non-negative convex functions such that  
$G^-(c_-)=G^+(c_+)=0$ and 
 \[ 
(G^+\wedge G^-)(p)=G^-(p)\quad\hbox{for $p\leq c_-$},\quad\ (G^+\wedge G^-)(p)=G^+(p)\quad\hbox{for $p\geq c_+$}.  
\]
Then
\[
\HV(\conv(G^+\wedge G^-))=\conv(\HV(G^+)\wedge \HV(G^-)).
\] 
\end{prop}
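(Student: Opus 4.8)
The plan is to replace the convexification by an explicit ``filled-in'' function and then match its effective Hamiltonian with $\conv(\HV(G^+)\wedge\HV(G^-))$ piece by piece: the corrector machinery of Section~\ref{sez existence of correctors} handles the two outer pieces, while the trivial bound $\HV\geq\beta$ handles the middle one.

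First I would identify $\conv(G^+\wedge G^-)$. Since $G^\pm$ are convex and non-negative with $G^-(c_-)=G^+(c_+)=0$, the points $c_\mp$ are the global minima of $G^\mp$, and the function $\widetilde G$ given by $\widetilde G=G^-$ on $(-\infty,c_-]$, $\widetilde G\equiv 0$ on $[c_-,c_+]$, $\widetilde G=G^+$ on $[c_+,+\infty)$ is convex (the one-sided derivatives match up at $c_-$ and $c_+$) and satisfies $\widetilde G\le G^+\wedge G^-$. Any convex minorant of $G^+\wedge G^-$ lies below the chord joining $(c_-,0)$ and $(c_+,0)$, hence is $\le 0$ on $[c_-,c_+]$, while $G^+\wedge G^-$ is already convex outside $[c_-,c_+]$; this forces $\conv(G^+\wedge G^-)=\widetilde G$. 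Note $\widetilde G\in\Ham$ after possibly shrinking $\alpha_0$ and enlarging $\alpha_1$, so by the convex homogenization theory $\HV(\widetilde G)$ is well defined, convex and coercive, it satisfies $\HV(\widetilde G)\ge\beta$ by Proposition~\ref{prop Lambda}(ii), and $\HV(\widetilde G)\le\HV(G^\pm)$ by the comparison argument of Section~\ref{bounds} (because $\widetilde G\le G^\pm$). Since $\HV(\widetilde G)$ is convex, this already gives one inequality: $\HV(\widetilde G)\le\conv\big(\HV(G^+)\wedge\HV(G^-)\big)$.

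For the reverse inequality I would show $\HV(\widetilde G)(\theta)\ge\conv\big(\HV(G^+)\wedge\HV(G^-)\big)(\theta)$ for every $\theta$, splitting $\R$ into $(-\infty,c_-]$, $[c_-,c_+]$ and $[c_+,+\infty)$. Recall from Proposition~\ref{prop Lambda} and the Remark after Theorem~\ref{teo main 1} that $\HV(G^\pm)\ge\beta$ with $\HV(G^-)(c_-)=\HV(G^+)(c_+)=\beta$; hence $\HV(G^+)\wedge\HV(G^-)$ equals $\beta$ at both $c_-$ and $c_+$ and is $\ge\beta$, so its convex envelope is squeezed to $\conv\big(\HV(G^+)\wedge\HV(G^-)\big)\equiv\beta$ on $[c_-,c_+]$, while trivially $\conv\big(\HV(G^+)\wedge\HV(G^-)\big)\le\HV(G^-)$ on $(-\infty,c_-]$ and $\le\HV(G^+)$ on $[c_+,+\infty)$. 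It therefore suffices to prove: (a) $\HV(\widetilde G)\ge\beta$ on $[c_-,c_+]$, immediate from Proposition~\ref{prop Lambda}(ii); (b) $\HV(\widetilde G)(\theta)\ge\HV(G^-)(\theta)$ for $\theta\le c_-$; (c) the symmetric bound $\HV(\widetilde G)(\theta)\ge\HV(G^+)(\theta)$ for $\theta\ge c_+$. For (b): if $\HV(G^-)(\theta)=\beta$ this follows from (a); if $\theta<c_-$ and $\lambda:=\HV(G^-)(\theta)>\beta$, apply Proposition~\ref{prop CS} to $p\mapsto G^-(p+c_-)$ with momentum offset $\theta-c_-<0$ to obtain a Lipschitz, sublinear corrector $F$ solving $a(x,\omega)F''+G^-(\theta+F')+\beta V=\lambda$ in $\R$, and then Proposition~\ref{prop gradient bounds corrector}(ii) to get $\theta+F'(x,\omega)\le c_--a^+_\lambda<c_-$ for a.e.\ $x$, with $a^+_\lambda>0$. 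Exactly as in the proof of Theorem~\ref{teo homogenization via correctors}, this a.e.\ bound implies that every $C^2$ sub/supertangent $\varphi$ to $F$ satisfies $\theta+\varphi'(x_0)<c_-$, where $\widetilde G=G^-$; hence $F$ is also a corrector of $a u''+\widetilde G(\theta+u')+\beta V=\lambda$, and the $\eps$-perturbation argument of Lemma~\ref{lemma corrector} (which works verbatim for $\widetilde G$ in place of $G_c$) yields $\HV(\widetilde G)(\theta)=\lambda=\HV(G^-)(\theta)$. Case (c) is symmetric, using Proposition~\ref{prop gradient bounds corrector}(i) and the translate $p\mapsto G^+(p-c_+)$. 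Combining (a)--(c) gives $\HV(\widetilde G)\ge\conv\big(\HV(G^+)\wedge\HV(G^-)\big)$ on all of $\R$; together with the inequality from the previous paragraph and Step~1, $\HV(\conv(G^+\wedge G^-))=\HV(\widetilde G)=\conv\big(\HV(G^+)\wedge\HV(G^-)\big)$.

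The main obstacle is case (b): showing that a corrector for $G^-$ is simultaneously a corrector for $\widetilde G$. Conceptually this says that modifying a convex Hamiltonian to the right of its minimum $c_-$ does not change its effective Hamiltonian to the left of $c_-$, and the mechanism is the \emph{strict} gradient bound $\theta+F'<c_-$ of Proposition~\ref{prop gradient bounds corrector}, which confines the corrector to the region $\{\,\widetilde G\equiv G^-\,\}$. The delicate point is passing from the a.e.\ gradient bound to a bound valid on all $C^2$ test functions, which is needed to identify the viscosity equation solved by $F$; but this is precisely the argument already carried out in the proof of Theorem~\ref{teo homogenization via correctors}, so it should transfer without change.
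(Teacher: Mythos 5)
Your proof is correct and follows essentially the same route as the paper's: identify $\conv(G^+\wedge G^-)$ as the explicit function $\widetilde G$ that equals $G^-$ left of $c_-$, vanishes on $[c_-,c_+]$, and equals $G^+$ right of $c_+$; then use the corrector gradient estimates of Section~\ref{sez existence of correctors} (Propositions~\ref{prop CS}, \ref{prop inequality corrector}, \ref{prop gradient bounds corrector}) to transfer a corrector for $G^\pm$ into a corrector for $\widetilde G$ whenever the gradient is confined to the region where the two Hamiltonians coincide, which pins down $\HV(\widetilde G)$ on each outer half-line.

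The one genuine (but minor) organizational difference is how the two inequalities are distributed. The paper extracts Lemma~\ref{coincide} (``if two convex $G$'s with minimum at $0$ agree on $[0,+\infty)$ then so do their effective Hamiltonians'') and proves it by running the corrector argument symmetrically in both $G$ and $G^+$, using $\HV\geq\beta$ to handle the case where neither exceeds $\beta$; the middle flat piece is then handled via $\HV(\widetilde G)(c_\pm)=\beta$ (Proposition~\ref{prop Lambda}(iv)), convexity of $\HV(\widetilde G)$, and $\HV(\widetilde G)\geq\beta$. You instead get $\HV(\widetilde G)\leq \conv(\HV(G^+)\wedge\HV(G^-))$ for free from comparison ($\widetilde G\leq G^\pm$) plus convexity of $\HV(\widetilde G)$, and only invoke correctors for the reverse inequality on the outer pieces, handling the middle piece just by $\HV(\widetilde G)\ge\beta$. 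This shaves off the symmetric half of the corrector argument at the cost of separately invoking comparison; both are valid, and the net amount of machinery is about the same. Your passage from the a.e.\ gradient bound to a bound on $C^2$ test functions is also fine: Proposition~\ref{prop inequality corrector} already gives two-sided Lipschitz increment bounds, from which the test-function derivative bounds follow directly (as the paper uses implicitly in the proof of Theorem~\ref{teo homogenization via correctors}).
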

In turn, the above proposition is a simple consequence of the following observation.
\begin{lemma}\label{coincide}
  Let $G,G^+\in{\cal H}(\gamma,\alpha_0,\alpha_1)$ be non-negative
  convex functions such that $G(0)=G^+(0)=0$.
  \begin{enumerate}[(i)]
\item   If $G(p)=G^+(p)$ for all $p\ge 0$,\  then \quad  $\HV(G^+)(\theta)=\HV(G)(\theta)\quad\text{for all }\theta\ge 0.$\medskip
\item  If $G(p)=G^-(p)$ for all $p\le 0$,\  then \quad $\HV(G^-)(\theta)=\HV(G)(\theta)\quad\text{for all }\theta\le 0.$
\end{enumerate}
\end{lemma}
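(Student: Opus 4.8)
The plan is to prove (i); part (ii) follows by applying (i) to $\check G(p):=G(-p)$ and $\check G^+(p):=G^+(-p)$, since $\HV(\check G)(\theta)=\HV(G)(-\theta)$ (as already observed in the proof of Proposition \ref{prop CS}). So fix $\theta\ge 0$ and consider the solutions $u_\theta$ and $u^+_\theta$ of \eqref{eq general HJ} with nonlinearities $G$ and $G^+$ respectively, both with initial datum $\theta x$. Since $G\le G^+$ everywhere (both agree on $[0,+\infty)$, and $G^+$ is nonnegative while $G$ may be smaller on $(-\infty,0)$ — actually, by convexity and $G(0)=G^+(0)=0$, on $(-\infty,0]$ we have $G^+$ nonincreasing, but we only know $G\le G^+$ because... let me be careful), we need $G\le G^+$: this holds since $G^+$ is convex with minimum value $0$ attained somewhere in $[0,+\infty)$ wait — actually $G^+(0)=0=\min G^+$ forces $0$ to be the minimizer, so $G^+$ is nonincreasing on $(-\infty,0]$; meanwhile $G\ge 0 = G^+(0)\ge$ nothing useful. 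Hmm. The clean fact is simply: $G^+=G$ on $[0,\infty)$ and both are $\ge 0$, and on $(-\infty,0)$, $G^+$ is... Let me instead just use $\HV(G)\le\HV(G^+)$ from $G\le G^+$ where the latter I will justify, OR argue directly with correctors. The safe route: by the comparison principle, $\min\{G,G^+\}\le G^+$ gives one inequality for free only if $G\le G^+$; since this may fail, I will instead prove the two inequalities $\HV(G)(\theta)\le\HV(G^+)(\theta)$ and $\ge$ separately using one-sided corrector/obstacle arguments.

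First I would prove $\HV(G)(\theta)\ge\HV(G^+)(\theta)$. This is the easy direction: since $G\le G^+$ is in fact true — indeed $G^+$ convex with $G^+(0)=0=\min G^+$ means $G^+\ge 0$, and for $p<0$ we have $G^+(p)\ge 0$; we are told nothing forcing $G(p)\le G^+(p)$ for $p<0$... I now realize the intended argument must go through correctors, exploiting that for $\theta>0$ the corrector $F_\theta$ associated with $G^+$ has $\theta+F_\theta'\ge$ some positive constant (by Proposition \ref{prop gradient bounds corrector}(i), applicable once $\HV(G^+)(\theta)>\beta$), hence the corrector ``lives'' in the region $p\ge 0$ where $G=G^+$, so it is also a corrector for $G$; by Lemma \ref{lemma corrector} (with $G$ in place of $G_c$) this forces $\HV^L(G)(\theta)=\HV^U(G)(\theta)=\HV(G^+)(\theta)$. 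This handles all $\theta>0$ with $\HV(G^+)(\theta)>\beta$. The hard part will be the degenerate case $\HV(G^+)(\theta)=\beta$, i.e. $\theta$ in the flat part of $\HV(G^+)$ at level $\beta$: here no corrector need exist, and I would instead combine the general bounds from Section \ref{bounds}, namely $\HV^L(G)(\theta)\ge\beta$ from Proposition \ref{prop lower bound} and $\HV^U(G)(\theta)\le\HV(G^+)(\theta)=\beta$ — but the latter needs $G\le G^+$!

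So let me reorganize honestly. The key point I would actually establish first is the pointwise inequality $G(p)\le G^+(p)$ for all $p\in\R$: since $G$ and $G^+$ are convex, nonnegative, equal to $0$ at $0$, and equal on $[0,+\infty)$, their right derivatives at $0$ coincide, say both equal $a_0\ge 0$; by convexity $G(p)\ge a_0 p$ and $G^+(p)\ge a_0 p$ for all $p$, but on $(-\infty,0)$ this only bounds them below. Nonetheless $G^+$ is convex, and among all convex functions agreeing with $G^+$ on $[0,\infty)$ and with $G^+\ge 0$... this does not pin down $G$ below $G^+$. Therefore the statement must be using something more — and indeed it is: $G\le G^+$ is simply \emph{not} claimed; what one shows is that $\HV$ only depends on the restriction to the relevant half-line. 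Concretely, I would prove $\HV(G)(\theta)\le\HV(G^+)(\theta)$ for $\theta\ge 0$ by building a supersolution of \eqref{eq general HJ} (with $G$) of the form $\theta x + F^+_\theta(x,\omega) + \HV(G^+)(\theta)\,t$ when $\HV(G^+)(\theta)>\beta$, using that $\theta+(F^+_\theta)'\ge 0$ a.e. so that $G(\theta+(F^+_\theta)')=G^+(\theta+(F^+_\theta)')$ and the function solves the $G^+$-equation hence is a supersolution (in fact solution) of the $G$-equation as well, then applying comparison exactly as in Lemma \ref{lemma corrector}; for $\theta$ in the flat part at level $\beta$, use $\HV^U(G)(\theta)\le\HV^U(G)(0)\vee\big(\text{nearby values}\big)$ — more precisely continuity/monotonicity of $\HV(G)$ together with $\HV(G)(0)=\beta$ (Proposition \ref{prop Lambda}(iv)) and the sandwich $\beta\le\HV(G)(\theta)\le\HV(G)(\theta')$ for $\theta'>\theta$ slightly larger with $\HV(G^+)(\theta')>\beta$, letting $\theta'\downarrow\theta$ and using $\HV(G^+)(\theta')\downarrow\beta$. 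The reverse inequality $\HV(G)(\theta)\ge\HV(G^+)(\theta)$ is obtained symmetrically (subsolution instead of supersolution), or simply from Lemma \ref{lemma corrector} in the nondegenerate case and the lower bound $\HV^L(G)(\theta)\ge\beta=\HV(G^+)(\theta)$ in the flat case. The main obstacle, as indicated, is the flat-part case where correctors are unavailable; I expect to dispose of it purely by the a priori bounds of Section \ref{bounds} plus convexity and continuity of the effective Hamiltonians.
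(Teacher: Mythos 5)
Your plan lands on the same core idea as the paper's proof: let $\lambda:=\HV(G^+)(\theta)$ (or $\HV(G)(\theta)$, by symmetry of roles), note that if $\lambda>\beta$ a corrector exists, use Proposition~\ref{prop gradient bounds corrector} to see that $\theta+F_\theta'$ stays strictly positive, so $G$ and $G^+$ agree along the corrector and the same function is a corrector for both equations, forcing the effective values to coincide; then handle the flat set by the uniform lower bound $\HV^L(\cdot)\ge\beta$. You correctly diagnose along the way that $G\le G^+$ is \emph{not} available from the hypotheses and that comparison alone is useless, which is the key pitfall. The only real difference is at the flat part. The paper observes simply that on the complement of $\{\HV(G)>\beta\}\cup\{\HV(G^+)>\beta\}$ both effective Hamiltonians are $\le\beta$ by definition of the complement and $\ge\beta$ by Proposition~\ref{prop lower bound}, hence both equal $\beta$; no continuity is needed. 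Your proposed continuity argument is also workable, but it has a small slip: as written you let $\theta'\downarrow\theta$, which for $\theta$ interior to the flat interval will eventually leave you with $\HV(G^+)(\theta')=\beta$ and no corrector to compare against; you should instead let $\theta'$ decrease to the right endpoint $\theta_0$ of the flat interval, use $\HV(G)(\theta')=\HV(G^+)(\theta')\to\beta$ there, and then use monotonicity of $\HV(G)$ on $[0,\infty)$ (which follows from convexity and $\HV(G)(0)=\beta=\min\HV(G)$) to conclude $\HV(G)(\theta)\le\HV(G)(\theta_0)\le\beta$. With that fix your route is valid, but it is slightly heavier than the paper's one-line disposal of the flat case.
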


\begin{proof}
  We shall prove only item (i), since the argument for (ii) is
  symmetric. Fix an arbitrary $\theta\geqslant 0$ such that
  $\lambda:=\HV(G)(\theta)>\beta$. Then there is a corrector
  $F_\theta(x,\omega)$ for
   \[
   a(x,\omega) u''+G(\theta+u')+\beta V(x,\omega)=\lambda\qquad\hbox{in $\R$.}
 \]
 We claim that $F_\theta(x,\omega)$ is also a corrector for 
  \begin{equation}\label{coin1}
   a(x,\omega)u''+G^+(\theta+u')+\beta V(x,\omega)=\lambda\qquad\hbox{in $\R$}.
  \end{equation}
  This follows immediately from derivative
  estimates of Proposition~\ref{prop inequality corrector}. Indeed, by
  this proposition,
  \[a^-_\lambda\le \theta+F'_\theta(x,\omega)\le b^-_\lambda,\] where
  $a^-_\lambda,b^-_\lambda>0$, $G(a^-_\lambda)=\lambda-\beta$, and
  $G(b^-_\lambda)=\lambda$.  Since $G^+(p)=G(p)$ for all $p\ge 0$, we
  conclude that
  \[
    G^+(\theta+F'_\theta(x,\omega)) = G(\theta+F'_\theta(x,\omega))
    \qquad\hbox{in $\R$}
  \] 
  in the viscosity sense. The existence of a corrector for equation
  \eqref{coin1} with $\lambda=\HV(G)(\theta)$ implies that
  $\HV(G^+)(\theta)=\lambda=\HV(G)(\theta)$. We conclude that
  \[\HV(G^+)(\theta)=\HV(G)(\theta)\quad \text{on the set 
      $\{\theta\geqslant 0\,|\,\HV(G)(\theta)>\beta\}$.}\] Exchanging the roles
  of $G$ and $G^+$ we also have that
  \[\HV(G)(\theta)=\HV(G^+)(\theta)\quad\text{on the set
      $\{\theta\geqslant 0\,|\,\HV(G^+)(\theta)>\beta\}$.}\] The last two
  statements in combination with the fact that
  $(\HV(G)\wedge\HV(G^+))(\theta)\ge \beta$ for all $\theta\in\R$
  complete the proof of the lemma.
\end{proof}

\begin{proof}[Proof of Proposition~\ref{commute}]
  Since $\beta=\min_{\theta\in\R} \HV(G^\pm)(\theta)$, we
  have that
\begin{equation}
  \label{form}
  \conv(\HV(G^+)\wedge \HV(G^--))(\theta)=
  \begin{cases}
    \HV(G^+)(\theta),&\text{if }\theta\ge c_+;\\\beta,&\text{if }-c_-\le \theta\le c_+;\\ \HV(G^-)(\theta),&\text{if }\theta\le -c_-.
  \end{cases}
\end{equation}
On the other hand, \[G(p):=\conv(G^+\wedge G^-)(p)=
  \begin{cases}
    G^+(p),&\text{if }p\ge c_+;\\0,&\text{if }-c_-\le p \le c_+;\\G^-(p),&\text{if }p\le -c_-.
  \end{cases}
\] Applying the first part of Lemma~\ref{coincide} to functions $G(\cdot +c_+),G^+(\cdot+c_+)$ and the second to functions $G(\cdot-c_-),G^-(\cdot-c_-)$ we infer that \[\HV(G)(\theta)=
  \begin{cases}
    \HV(G^+)(\theta),&\text{if }\theta\ge c_+;\\\HV(G^-)(\theta),&\text{if }\theta\le -c_-.
  \end{cases}
\]
By Proposition \ref{prop Lambda}-(iv), we know that $\HV(G)(c_+)=\HV(G)(c_-)=\beta$. Combining this with the fact that $\HV(G)$ is convex and $\HV(G)(\theta)\ge \beta$ for all $\theta\in\R$, see Proposition \ref{prop lower bound}, we get that $\HV(G)$ coincides with the right hand side of \eqref{form}. This finishes the proof.
\end{proof}

  \begin{proof}[Proof of Theorem~\ref{teo main 2}]
    First of all, we note that by Theorem 1.1 the right hand side of
    \eqref{eh0n} is equal to \eqref{eh0nl}.

\medskip
    
 \noindent{\em Upper bound.} Since $(G_0\wedge G_1\wedge\dots\wedge G_n)(p)\le G_{i-1,i}(p)$ for
 all $p\in\R$ and $i\in\{1,2,\dots,n\}$, by comparison, the left hand side of
 \eqref{eh0n} does not exceed the right hand side.

\medskip
 
\noindent{\em Lower bound.} We introduce a piece of notation first. For
all $i<j$, let us set $G_{ij}:=G_i\wedge G_j$ and denote by 
$\widehat{p}_{ij}\in (c_i,c_j)$ a  solution of the equation $G_i(p)=G_j(p)$. Note that 
\[
(G_i\wedge G_j)(p)=G_i(p)\quad\hbox{if }{p\le \widehat{p}_{ij},}
\qquad
(G_i\wedge G_j)(p)=G_j(p)\quad\hbox{if }{p\ge \widehat{p}_{ij},}
\]
and
\[G_{ij}(\widehat{p}_{ij})=G_i(\widehat{p}_{ij})=G_j(\widehat{p}_{ij})=\max_{p\in[c_i,c_j]}G_{ij}(p).\]
Set $G_{00}:= G_0$ and $G_{nn}:= G_n$. By comparison, for each $i\in\{1,2,\dots,n\}$
 \begin{equation}
   \label{lb1}
   \HV(G_0\wedge G_1\wedge\dots\wedge G_n)\ge \HV(\conv(G_{0,i-1})\wedge \conv(G_{in})).
 \end{equation}
 Next we shall write the formula for $\HV(G_{i-1,i})$ from Theorem 1.1
 in a way which covers both weak and strong potential cases. For
 $i\in\{1,2,\dots,n\}$
 \begin{equation}
   \label{lb0}
   \HV(G_{i-1,i})(\theta)=
   \begin{cases}
     \HV(G_{i-1})(\theta),&\text{if }\theta<\theta_{i-1,i}^-;\\ G_{i-1,i}(\widehat{p}_{i-1,i})\vee \beta,&\text{if }\theta_{i-1,i}^-\le \theta\le \theta_{i-1,i}^+;\\ \HV(G_i)(\theta),&\text{if }\theta>\theta_{i-1,i}^+,
   \end{cases}
 \end{equation}
where $\theta_{i-1,i}^-$ (resp.\ $\theta_{i-1,i}^+$) is the smallest (resp.\ largest) solution in $[c_{i-1},\widehat{p}_{i-1,i}]$ (resp.\ $[\widehat{p}_{i-1,i},c_i]$) of the equation \[\HV(G_{i-1})(\theta)=G_{i-1,i}(\widehat{p}_{i-1,i})\vee \beta\quad (\text{resp. }\HV(G_i)(\theta)=G_{i-1,i}(\widehat{p}_{i-1,i})\vee \beta).\] In the strong potential case we simply have $\theta_{i-1,i}^-=c_{i-1}$ and $\theta_{i-1,i}^+=c_i$ (see Figure~\ref{fig1}).

 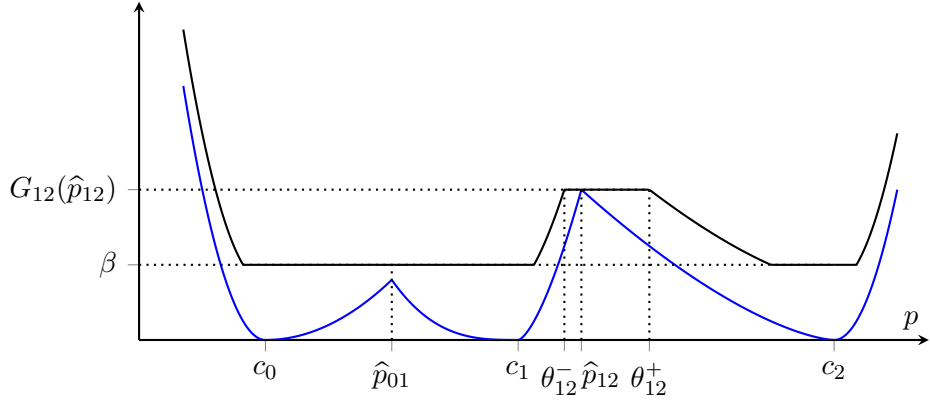
\begin{figure}[h!]
   \begin{tikzpicture}
   \begin{axis}[scale only axis=true,
         width=0.7\textwidth,
         height=0.3\textwidth,
         axis x line=middle, xlabel={$p$}, axis y line=left,
     ylabel={$\ $}, tick align=outside, samples=100,
     xtick={-4,-2,0,0.7310,1,2.076,5},
     xticklabels={$c_0$,$\widehat{p}_{01}$,$c_1$,$\theta_{12}^-\ $,$\ \quad\widehat{p}_{12}$,$\theta_{12}^+$, $c_2$}, xmin=-6.0,
     xmax=6.5, ytick={1,2}, yticklabels={$\beta$, $G_{12}(\widehat{p}_{12})$}, ymax=4.5, thick]
     \addplot[domain=-5.3:-4.0, no marks, blue] (\x,{2*(\x+4)^2});
     \addplot+[domain=-4.0:-2.0, no marks, blue] (\x,{0.2*(\x+4)^2});
     \addplot+[domain=-2.0:0.0,  no marks, blue] (\x,{(-\x)^3*0.1});
     \addplot+[domain=0.0:1.0,  no marks, blue](\x,{2*(\x)^(1.5)});
     \addplot+[domain=1.0:5.0, no marks, blue] (\x,{0.25*(abs(5-\x))^(1.5)});
     \addplot+[domain=5.0:6.0, no marks, solid, blue](\x,{2*(\x-5)^2});
     \addplot[domain=-5.3:-4.0, no marks, black] (\x,{max(2*(\x+4)^2+0.75,1)});
     \addplot+[domain=-4.0:-2.0, no marks, solid, black] (\x,{min(max(0.2*(\x+4)^2+0.75,1),1)});
     \addplot+[domain=-2.0:0.0,  no marks, solid, black] (\x,{min(max((-\x)^3*0.1+0.75,1),1)});
     \addplot+[domain=0.0:1.0,  no marks, solid, black](\x,{min(max(2*(\x)^(1.5)+0.75,1),2)});
     \addplot+[domain=1.0:5.0, no marks, black] (\x,{min(max(0.25*(abs(5-\x))^(1.5)+0.75,1),2)});
     \addplot+[domain=5.0:6.0, no marks, solid, black](\x,{max(1,2*(\x-5)^2+0.75,1)});
     \addplot[mark=none, dotted] coordinates {(-2, 0) (-2, 1)};
     \addplot[mark=none, dotted] coordinates {(0.7310, 0) (0.7310, 2)};
     \addplot[mark=none, dotted] coordinates {(1, 0) (1, 2)};
     \addplot[mark=none, dotted] coordinates {(2.076, 0) (2.076, 2)};
     \addplot[mark=none, dotted] coordinates {(-6, 1) (5, 1)};
     \addplot[mark=none, dotted] coordinates {(-6, 2) (2, 2)};
 \end{axis}
 \end{tikzpicture}
 \caption{The original Hamiltonian $G_0(p)\wedge G_1(p)\wedge G_2(p)$ is depicted in blue and the effective Hamiltonian is in black. Note that $\theta^-_{01}=c_0$ and $\theta^+_{01}=c_1$.}
 \label{fig1}
 \end{figure}

In the same way, for each $i\in\{1,2,\dots,n\}$ and $\theta\in[c_{i-1},c_i]$
we get that
\begin{equation}
  \label{lb2}
  \HV(\conv(G_{0,i-1})\wedge \conv(G_{in}))=
  \begin{cases}
    \HV(\conv(G_{0,i-1}))(\theta),&\text{if }\theta<\theta_{i-1,i}^-;\\
    G_{i-1,i}(\widehat{p}_{i-1,i})\vee \beta,&\text{if }\theta_{i-1,i}^-\le \theta\le \theta_{i-1,i}^+;\\
    \HV(\conv(G_{in}))(\theta),&\text{if }\theta>\theta_{i-1,i}^+.
  \end{cases}
\end{equation}
We emphasize that $\theta_{i-1,i}^\pm$ which appear in \eqref{lb2}
are the same as in \eqref{lb0}. Indeed, by the definition of $\theta_{i-1,i}^-$ and Proposition~\ref{commute}, $\theta_{i-1,i}^-\ge c_{i-1}$ and  
\begin{align*}
  \HV(\conv(G_{0,i-1}))(\theta)=\conv(\HV(G_{0}) \wedge \HV (G_{i-1}))(\theta) =\HV(G_{i-1})(\theta)\ \ \text{for $\theta\ge c_{i-1}$}.
\end{align*}
Similarly, $\theta_{i-1,i}^+\le c_i$ and
\[
\HV(\conv(G_{in}))(\theta)=\conv(\HV(G_{i}) \wedge \HV (G_{n}) )(\theta)
=
\HV(G_i)(\theta) \ \  \text{for $\theta\le c_i$.}
\]
These formulas together with \eqref{lb0} and \eqref{lb2}
imply that for all $\theta\in [c_{i-1},c_i],\ i\in\{1,2,\dots,n\}$
\begin{equation}\label{lb21}
  \HV(\conv(G_{0,i-1})\wedge \conv(G_{in}))(\theta)=\HV(G_{i-1,i})(\theta).
\end{equation}
From \eqref{lb1}, \eqref{lb2}, and \eqref{lb21} we conclude that 
\begin{align}
  \label{lb31}
  \HV(G_0\wedge G_1\wedge\dots&\wedge G_n)(\theta)\ge \max_{j\in\{1,2,\dots,n\}}\HV(\conv(G_{0,j-1})\wedge \conv(G_{jn}))(\theta)\\ \label{lb32}&=
  \begin{cases}
    \HV(G_{00}),&\text{if }\theta<c_0;\\
    \HV(G_{i-1,i}),&\text{if }  c_{i-1}\le \theta\le c_i,\ i\in\{1,2,\dots,n\};\\
    \HV(G_{nn}),&\text{if } \theta>c_n.
  \end{cases}
\end{align}
Indeed, for all $\theta\le c_0 <\theta^-_{01}$ and all $j\in\{1,2,\dots,n\}$
\begin{multline*}
  \HV(\conv(G_{0,j-1})\wedge \conv(G_{jn}))(\theta)\overset{\eqref{lb2}}{=}\HV(\conv(G_{0,j-1}))(\theta)\\ \overset{\text{Prop.~\ref{commute}}}{=}\conv(\HV(G_{0,j-1}))(\theta)\overset{\text{Th.}~\ref{teo main 1}}{=}\HV(G_0)(\theta)=\HV(G_{00})(\theta).
\end{multline*}
This proves that the right hand side of \eqref{lb31} is equal to the
first line of \eqref{lb32} when $\theta\leq c_0$. Similar argument
establishes the equality for $\theta\geq c_n$. Next, combining \eqref{lb21} with
the fact that for all $i,j\in\{1,2,\dots,n\}$ such that $j\ne i$
\begin{equation}
  \label{lb34}
  \HV(\conv(G_{0,j-1})\wedge \conv(G_{jn}))(\theta)=\beta\le \HV(G_{i-1,i})(\theta),\quad \theta\in[c_{i-1},c_i],
\end{equation}
we obtain the equality between the right hand side of \eqref{lb31}
and \eqref{lb32}.  Finally, we notice that \eqref{lb32} coincides with
\eqref{eh0nl}. This completes the proof.
  \end{proof}

  \appendix  
\section{PDE results}\label{pderes}

In this appendix we state and prove some PDE results we need for our study. 
We introduce the following list of conditions on the ingredients of the parabolic and stationary Hamilton--Jacobi equations 
we will consider, for fixed constants $\alpha_0,\alpha_1, \kappa>0$ and $\gamma>1$:
\begin{itemize}
\item[(AV)] \quad $\sqrt{a}, V:\R\to [0,1]$ are $\kappa$--Lipschitz continuous;\medskip
\item[(G1)] \quad $\alpha_0|p|^\gamma-1/\alpha_0\leqslant G(p)\leqslant\alpha_1(|p|^\gamma+1)$\qquad for all $x,p\in\R$;\medskip
\item[(G2)] \quad
  $|G(p)-G(q)|\leqslant\alpha_1\left(|p|+|q|+1\right)^{\gamma-1}|p-q|$\qquad for all $p,q\in\R$;\medskip
\item[(G3)] \quad $G(0)=0$;\medskip
\item[(G4)] \quad $G(\cdot)$ is convex;\medskip
\item[(G5)] \quad $G(p)\geqslant 0$ for every $p\in\R$. 
\end{itemize}

In what follows, we will denote by $\D{LSC}(X)$ and $\D{USC}(X)$ the space of 
lower and upper semi-continuous real functions on the topological space $X$, respectively.

\subsection{Parabolic equation}\label{sez appendix parabolic}

We consider the parabolic equation 
\begin{equation}\label{eq parabolic}
\partial_{t }u=a(x)\partial^2_{xx} u +G(\partial_x u)+  \beta V(x,\omega)  \qquad \hbox{in $(0,+\infty)\times\R$}.
\end{equation}

We have the following comparison result.

\begin{prop}\label{prop Lip comp}
Assume condition (AV) and let $G\in\CC(\R)$. 
Let $v\in\D{USC}([0,T]\times\R)$,\, $w\in\D{LSC}([0,T]\times\R)$ be, respectively, a {sub- and a super- solutions} of 
\eqref{eq parabolic} satisfying 
\begin{equation}\label{hyp 2}
\limsup_{\substack{|x|\to +\infty}}\ \sup_{t\in [0,T]}\frac{v(t,x)}{1+|x|}\leqslant 0 
\leqslant 
\liminf_{\substack{|x|\to +\infty}}\ \inf_{t\in [0,T]}\frac{w(t,x)}{1+|x|}.
\end{equation}
Let us furthermore assume that either $\partial_x v$ or $\partial_x w$
belongs to 
$L^\infty\left((0,T)\times \R\right)$.  Then
\[
v(t,x)-w(t,x)\leqslant \sup_{\,\R}\big(v(0,\cdot)-w(0,\cdot)\big)\quad\hbox{for every  $(t,x)\in\cTcyl$.}  
\]
\end{prop}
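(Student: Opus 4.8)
The plan is to run a doubling-of-variables comparison argument adapted to the unbounded spatial domain and to the possible degeneracy of $a$; the $L^\infty$ bound on one of the gradients, which is the only non-standard hypothesis, will be used precisely to keep the penalization parameter bounded. First I would reduce to a normalized situation: setting $m_0:=\sup_{\R}\big(v(0,\cdot)-w(0,\cdot)\big)$, replacing $v$ by $v-m_0$ leaves it a subsolution, so it suffices to prove that $v(0,\cdot)\le w(0,\cdot)$ implies $v\le w$ on $[0,T]\times\R$. Arguing by contradiction, assume $M_0:=\sup_{[0,T]\times\R}(v-w)>0$; for $\eps_0\in(0,M_0/T)$ the function $v_{\eps_0}(t,x):=v(t,x)-\eps_0 t$ is a \emph{strict} subsolution of \eqref{eq parabolic} (every $C^2$ function touching $v_{\eps_0}$ from above does so with slack at least $\eps_0$), still satisfies $v_{\eps_0}(0,\cdot)\le w(0,\cdot)$, and has $\sup_{[0,T]\times\R}(v_{\eps_0}-w)\ge M_0-\eps_0 T>0$. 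By symmetry it is enough to treat the case in which $v$ (hence $v_{\eps_0}$) is Lipschitz in $x$, uniformly in $t$, with some constant $L$.

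Next I would run the \emph{localized} doubling. For $\eps,\delta>0$, with $\langle x\rangle:=\sqrt{1+|x|^2}$, consider
\[
\Phi_{\eps,\delta}(t,x,y):=v_{\eps_0}(t,x)-w(t,y)-\frac{|x-y|^2}{2\eps}-\delta\langle x\rangle-\delta\langle y\rangle,\qquad (t,x,y)\in[0,T]\times\R^2.
\]
Using \eqref{hyp 2} (choose, for the given $\delta$, a radius $R$ with $v(t,x)\le\frac{\delta}{2}(1+|x|)$ for $|x|\ge R$ and $w(t,y)\ge-\frac{\delta}{2}(1+|y|)$ for $|y|\ge R$, uniformly in $t\in[0,T]$) together with the local boundedness of the USC function $v$ and the LSC function $w$, one checks that $\Phi_{\eps,\delta}(t,x,y)\to-\infty$ as $|x|+|y|\to+\infty$, uniformly in $t$ and $\eps$; hence $\Phi_{\eps,\delta}$ attains its supremum at some $(t_\eps,x_\eps,y_\eps)$ with $(x_\eps,y_\eps)$ in a compact set independent of $\eps$. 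For $\delta$ small one has $\sup_{[0,T]\times\R}(v_{\eps_0}-w-2\delta\langle\cdot\rangle)>0$, while the standard estimate for $\sup_{x,y}\big[\text{(USC function)}-|x-y|^2/(2\eps)\big]$ shows that $\sup\{\Phi_{\eps,\delta}(0,x,y):x,y\in\R\}$ converges, as $\eps\to0$, to a number $\le m_0\le0$; therefore $t_\eps>0$ for all small $\eps$.

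The crucial step comes next. The $x$-momentum of the penalization at $(x_\eps,y_\eps)$ is $p_\eps=(x_\eps-y_\eps)/\eps+\delta x_\eps/\langle x_\eps\rangle$, and every $(b,p,X)\in\overline{\mathcal{P}}^{\,2,+}v_{\eps_0}(t_\eps,x_\eps)$ has $|p|\le L$ because $v_{\eps_0}$ is $L$-Lipschitz in $x$; hence $|(x_\eps-y_\eps)/\eps|\le L+\delta$, which yields both $|x_\eps-y_\eps|\le(L+\delta)\eps\to0$ and $|x_\eps-y_\eps|^2/\eps\le(L+\delta)^2\eps\to0$, and the matching $y$-momentum $q_\eps=(x_\eps-y_\eps)/\eps-\delta y_\eps/\langle y_\eps\rangle$ satisfies $|q_\eps|\le L+2\delta$ and $|p_\eps-q_\eps|\le2\delta$. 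Thus $G$ is only ever evaluated on a fixed bounded set, on which it is uniformly continuous with modulus $\omega_G$, so $|G(p_\eps)-G(q_\eps)|\le\omega_G(2\delta)$. Now I would apply the parabolic Crandall--Ishii lemma to $v_{\eps_0}$ and $w$ at $(t_\eps,x_\eps,y_\eps)$: this produces $b_\eps\in\R$ and $X_\eps,Y_\eps\in\R$ with $(b_\eps,p_\eps,X_\eps)\in\overline{\mathcal{P}}^{\,2,+}v_{\eps_0}(t_\eps,x_\eps)$, $(b_\eps,q_\eps,Y_\eps)\in\overline{\mathcal{P}}^{\,2,-}w(t_\eps,y_\eps)$, and, after absorbing the bounded Hessians of $\delta\langle\cdot\rangle$, the usual one-dimensional matrix inequality in which $\mathrm{diag}(X_\eps,-Y_\eps)$ is dominated by $3/\eps$ times the Hessian of $|x-y|^2/2$ plus a $C\delta$ multiple of the identity. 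Since $t_\eps>0$ (and when $t_\eps=T$ the terminal time is treated by the standard convention for parabolic equations), the two viscosity inequalities and the strictness slack give
\[
\eps_0\le a(x_\eps)X_\eps-a(y_\eps)Y_\eps+G(p_\eps)-G(q_\eps)+\beta\big(V(x_\eps,\omega)-V(y_\eps,\omega)\big).
\]
Here $\beta|V(x_\eps,\omega)-V(y_\eps,\omega)|\le\beta\kappa|x_\eps-y_\eps|\to0$; the $G$-difference is at most $\omega_G(2\delta)$; and, testing the matrix inequality against $\big(\sqrt{a(x_\eps)},\sqrt{a(y_\eps)}\big)$ and using that $\sqrt a$ is $\kappa$-Lipschitz with values in $[0,1]$, one obtains $a(x_\eps)X_\eps-a(y_\eps)Y_\eps\le(3/\eps)\big(\sqrt{a(x_\eps)}-\sqrt{a(y_\eps)}\big)^2+2C\delta\le 3\kappa^2|x_\eps-y_\eps|^2/\eps+2C\delta\to 2C\delta$. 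Letting $\eps\to0$ yields $\eps_0\le 2C\delta+\omega_G(2\delta)$, a contradiction once $\delta$ is small. Hence $v_{\eps_0}\le w$ on $[0,T]\times\R$ for every small $\eps_0$; letting $\eps_0\to0$ and undoing the normalization gives $v-w\le m_0$ everywhere, which is the claim.

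The main obstacle, and the reason one of the gradient bounds is imposed, is the interplay between the unbounded domain---which forces one to replace boundedness of the solutions by the linear growth control \eqref{hyp 2} and to localize with the term $\delta\langle\cdot\rangle$---and the possible degeneracy of $a$: the penalization parameter $(x_\eps-y_\eps)/\eps$ must be kept bounded both so that $|x_\eps-y_\eps|^2/\eps\to0$ (which is what lets the $\sqrt a$-Lipschitz trick close the second-order term despite the degeneracy) and so that the momenta stay in a set on which the merely continuous, superlinear nonlinearity $G$ is uniformly continuous; the $L^\infty$ bound on $\partial_x v$ (or $\partial_x w$) is exactly what supplies this. Everything else---the strict-subsolution perturbation, the parabolic Crandall--Ishii lemma, and the handling of the terminal time $t=T$---is routine.
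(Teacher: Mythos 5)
Your proof is correct and follows the standard doubling-of-variables argument for comparison, which is exactly what the paper outsources to its cited references (``The proof is standard, see for instance \cite[Proposition 2.3]{DK17} and \cite[Appendix A]{D19}'') rather than reproducing. You correctly identify the two points where the standard scheme must be adapted here --- the linear-growth hypothesis \eqref{hyp 2} together with the $\delta\langle\cdot\rangle$ localization to compactify in the unbounded domain, and the joint use of the $L^\infty$ gradient bound and the $\kappa$-Lipschitz continuity of $\sqrt{a}$ from (AV) to close the second-order term despite possible degeneracy of $a$ --- and the argument you give is complete at the usual level of detail for such comparison proofs.
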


The proof is standard, see for instance \cite[Proposition 2.3]{DK17} and \cite[Appendix A]{D19}. The next result shows that equation \eqref{eq parabolic} is 
well posed in $\D{UC}(\ccyl)$.

\begin{teorema}\label{teo parabolic eq}
  Assume conditions (AV) and (G1)-(G2). Then for every $g\in\D{UC}(\R)$ there exists a unique solution $u\in\D{UC}({\ccyl})$ of \eqref{eq parabolic} 
  satisfying $u(0,\cdot)=g$ on $\R$. 
\end{teorema}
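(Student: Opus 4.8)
The plan is to establish well-posedness in $\D{UC}(\ccyl)$ in the usual three stages: uniqueness, then a priori regularity and uniform continuity estimates, and finally existence by Perron's method together with a uniformly continuous initial barrier.

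First I would deduce uniqueness directly from the comparison principle of Proposition \ref{prop Lip comp}. Strictly speaking that statement requires one of the two competitors to have bounded $x$-derivative, so before invoking it I would record the following Lipschitz-in-space propagation fact: if $g\in\D{UC}(\R)$ and we approximate $g$ from above and below by Lipschitz functions $g^\pm_n$ with $\|g^\pm_n{}'\|_\infty<\infty$ and $g^-_n\le g\le g^+_n$, $\|g-g^\pm_n\|_\infty\to 0$, then the corresponding solutions $u^\pm_n$ will be globally Lipschitz in $x$ (uniformly on $[0,T]$) by the spatial translation/comparison argument already used in the proof of Proposition \ref{prop Lambda}: for fixed $h\in\R$, $u^\pm_n(t,x+h)$ solves the same equation with $a(\cdot+h),V(\cdot+h)$, and a standard "doubling plus comparison with a large linear-in-$x$ correction" argument, exploiting (AV) and the polynomial modulus (G2), controls $|u^\pm_n(t,x+h)-u^\pm_n(t,x)|$ by $C|h|$. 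Once solutions with bounded spatial gradient are available, Proposition \ref{prop Lip comp} gives $\sup_{\ccyl}|u^+_n-u^-_m|\le\|g^+_n-g^-_m\|_\infty$ and $\|u^\pm_n-u^\pm_m\|_\infty\le\|g^\pm_n-g^\pm_m\|_\infty$, so $(u^\pm_n)$ are Cauchy in $\D{C}(\cTcyl)$ for each $T$; their common limit $u$ is the candidate solution, and stability of viscosity solutions under uniform convergence shows $u$ solves \eqref{eq parabolic} with $u(0,\cdot)=g$. Uniqueness of such a $u$ follows because any other $\D{UC}(\ccyl)$ solution $\tilde u$ can be squeezed: $\tilde u$ agrees with $g$ at $t=0$, so comparing $\tilde u$ against $u^\pm_n$ (legitimate since $u^\pm_n$ has bounded $x$-gradient and the sublinear growth hypothesis \eqref{hyp 2} holds because $\tilde u$ is uniformly continuous hence grows at most linearly) forces $u^-_n-\|g-g^-_n\|_\infty\le\tilde u\le u^+_n+\|g-g^+_n\|_\infty$, and letting $n\to\infty$ gives $\tilde u=u$.

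It remains to check that the limit $u$ is actually uniformly continuous on $\ccyl$, not merely continuous. Spatial uniform continuity is inherited from the uniform-in-$n$ spatial modulus of continuity of the $u^\pm_n$, which one gets exactly as in the Lipschitz estimate above but keeping the modulus of $g$ instead of a Lipschitz constant — i.e. if $g$ has modulus $\rho$, then $|u(t,x)-u(t,y)|\le\rho(C|x-y|)+C|x-y|$ type bound via comparison of $u(t,\cdot+h)$ with $u(t,\cdot)$ shifted by the oscillation of $g$ over scale $|h|$ plus a linear correction handling the $a,V$ mismatch. Temporal uniform continuity, uniform over $x$, follows from a barrier argument at $t=0$: for each $\eps$ pick a $C^2$ function $\psi_\eps$ with $|g-\psi_\eps|\le\eps$ and bounded derivatives; then $\psi_\eps(x)\pm\big(\eps + C_\eps t\big)$, with $C_\eps:=\|a\psi_\eps''+G(\psi_\eps')+\beta\|_\infty<\infty$, are a super/subsolution above/below $g$ at $t=0$, so by comparison $|u(t,x)-g(x)|\le 2\eps+C_\eps t$, which combined with the spatial modulus and the semigroup property yields a modulus of continuity in $t$ that is uniform in $x$.

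The main obstacle I expect is the Lipschitz/modulus-in-space propagation estimate needed to legitimately apply Proposition \ref{prop Lip comp} at every stage: because the equation has $x$-dependent (and possibly degenerate) diffusion $a(x)$ and $x$-dependent potential $V(x)$, the naive shift comparison does not close — shifting $x\mapsto x+h$ changes the coefficients, and one must absorb the resulting error using the Lipschitz bound (AV) on $\sqrt a$ and $V$ together with the growth control (G1)–(G2) on $G$, typically by adding a correction term of the form $K|h|\langle x\rangle$ or $K|h|(1+t)$ and checking it is a supersolution of the doubled equation. This is the one genuinely technical point; everything else is bookkeeping with Perron's method and the comparison principle, and, as the statement of the theorem notes, these facts are of the type that "guarantee the same kind of PDE results" used throughout, so I would also simply cite \cite{DK17, D19} for the details of this estimate and of well-posedness, sketching only the shift-comparison idea.
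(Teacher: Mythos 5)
The paper does not give a proof of Theorem~\ref{teo parabolic eq}: it is stated without argument, with the relevant techniques (comparison, Lipschitz propagation, existence via Perron's method) deferred to the cited sources, notably \cite{DK17} and \cite{D19}. Your sketch is the standard route those references take, and you yourself defer the genuinely technical step—the shift-comparison estimate that handles the $x$-dependent $a$ and $V$—to the same references, so the two are consistent in substance and in what is left to the literature.

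One point to tighten if you were to write this out: you justify invoking Proposition~\ref{prop Lip comp} against an arbitrary $\D{UC}$ competitor $\tilde u$ by saying that uniform continuity gives at most linear growth and hence \eqref{hyp 2} holds. That is not quite what \eqref{hyp 2} asks for: it is a \emph{one-sided} sublinearity condition (the subsolution's $\limsup$ of $v/(1+|x|)$ must be $\leqslant 0$, the supersolution's $\liminf$ must be $\geqslant 0$), which a linearly growing function fails. The standard fix—and the way the paper uses this proposition elsewhere, e.g.\ in the proofs of Propositions~\ref{prop lower bound} and \ref{prop upper bound}—is to first subtract a common affine or Lipschitz reference function (here, a Lipschitz extension of $g$, or $u^\pm_n$ itself) and apply comparison to the differences. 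With that bookkeeping repair your argument closes; no new idea is needed, and the heart of the matter remains the Lipschitz-in-space propagation estimate that you correctly identify and defer.
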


We also need the following Lipschitz bounds for solutions to \eqref{eq parabolic} with linear initial data. 
We refer to \cite[Theorem 2.8]{DK17} for proofs.

\begin{prop}\label{prop Lip estimates}
Assume conditions (AV) and (G1)-(G2). For every $\theta\in\R$, the unique solution $u_\theta$ of \eqref{eq parabolic} in 
$\D{UC}(\ccyl)$ with initial condition $u(0,x)=\theta x$ is $\kappa(\theta)$--Lipschitz continuous in $\ccyl$, for some locally bounded 
functions $\kappa:\R\to [0,+\infty)$. 
\end{prop}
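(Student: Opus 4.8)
The plan is to treat separately three things: the affine two-sided bound on $u_\theta(t,x)-\theta x$, Lipschitz continuity in $t$, and Lipschitz continuity in $x$; the first two are elementary consequences of the comparison principle, while the third is the substantial point and I would obtain it by the doubling-of-variables technique. For the affine bounds, note that since $0\le V\le 1$ the functions $(t,x)\mapsto \theta x+G(\theta)t$ and $(t,x)\mapsto\theta x+(G(\theta)+\beta)t$ are, respectively, a sub- and a supersolution of \eqref{eq parabolic} with datum $\theta x$; because $u_\theta$ is produced by Perron's method (Theorem~\ref{teo parabolic eq}) it lies between such barriers, so $\theta x+G(\theta)t\le u_\theta(t,x)\le\theta x+(G(\theta)+\beta)t$, and in particular $u_\theta(t,\cdot)-\theta x$ is bounded on $\cTcyl$ for every $T>0$. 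Lipschitz continuity in $t$ then follows: $u_\theta(s,\cdot)-\theta x$ lies between $G(\theta)s$ and $(G(\theta)+\beta)s$, the equation is autonomous in $t$, and adding a constant to a solution produces a solution, so comparing $u_\theta(t+s,\cdot)$ with $u_\theta(t,\cdot)+G(\theta)s$ and with $u_\theta(t,\cdot)+(G(\theta)+\beta)s$ via Proposition~\ref{prop Lip comp} gives $|u_\theta(t+s,x)-u_\theta(t,x)|\le(|G(\theta)|+\beta)s$ for all $t,s\ge 0$.

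For the estimate $|u_\theta(t,x)-u_\theta(t,y)|\le L(\theta)|x-y|$ I would argue by contradiction. Fix $T>0$ and, for parameters $L,\delta,\eps,\eta>0$, consider on $[0,T)\times\R^2$
\[
\Phi(t,x,y):=u_\theta(t,x)-u_\theta(t,y)-L\sqrt{|x-y|^2+\delta^2}-\eps\bigl(\sqrt{1+x^2}+\sqrt{1+y^2}\bigr)-\frac{\eta}{T-t}.
\]
If $\sup\Phi>0$ and $L\ge|\theta|$, then the supremum is not attained at $t=0$ (where $\Phi\le(|\theta|-L)|x-y|\le 0$), and by the $\eps$-term it is attained at an interior point $(\bar t,\bar x,\bar y)$ with $\bar t\in(0,T)$. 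Applying the parabolic theorem on sums (Crandall--Ishii lemma) at this point and subtracting the sub- and supersolution inequalities yields
\[
\frac{\eta}{(T-\bar t)^2}\le\bigl(a(\bar x)X-a(\bar y)Y\bigr)+\bigl(G(p_x)-G(p_y)\bigr)+\beta\bigl(V(\bar x)-V(\bar y)\bigr),
\]
where $p_x,p_y$ share the common gradient of the doubling variable (of modulus $\le L$) and differ only by an $O(\eps)$ contribution from the localizer, hence $G(p_x)-G(p_y)=O_L(\eps)$ by (G2); the potential term is $\le\beta\kappa|\bar x-\bar y|$; and the second-order term is handled by Ishii's structure condition, testing the matrix inequality against $(\sqrt{a(\bar x)},\sqrt{a(\bar y)})$ and using $|\sqrt{a(\bar x)}-\sqrt{a(\bar y)}|\le\kappa|\bar x-\bar y|$ from (AV) together with the elementary bound $s^2\,\frac{\delta^2}{(s^2+\delta^2)^{3/2}}\le\delta$, which gives $a(\bar x)X-a(\bar y)Y\le L\kappa^2\delta+o_\rho(1)$. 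Finally $|\bar x-\bar y|$ is itself controlled: from $\Phi(\bar t,\bar x,\bar y)>0$ and the global modulus $\omega$ of $u_\theta\in\D{UC}(\ccyl)$ one has $L|\bar x-\bar y|\le\omega(|\bar x-\bar y|)$, and since a uniformly continuous modulus grows at most linearly this forces $|\bar x-\bar y|\le C\omega(1)/L$. Choosing $L$ large, then $\delta,\eps$ small, then $\rho\to 0$, contradicts positivity of the left-hand side; sending $\eps,\delta,\eta\to 0$ yields the bound, with $L$ depending only on $|\theta|$, $\beta$, $\kappa$ and $\omega$ — in particular uniformly in $t\ge 0$, since $\omega$ is global on $\ccyl$.

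Combining the $x$- and $t$-estimates gives Lipschitz continuity of $u_\theta$ on $\ccyl$, and local boundedness of $\kappa(\theta)$ is immediate since all constants depend on $\theta$ only through $|\theta|$, $|G(\theta)|$ and $\sup_{[-L(\theta),L(\theta)]}|G|$. The main obstacle is precisely the treatment of the (possibly degenerate) second-order term $a(\bar x)X-a(\bar y)Y$ in the doubling argument: this is the step that forces the requirement in (AV) that $\sqrt a$, rather than $a$ itself, be Lipschitz, as it is exactly what supplies Ishii's structure condition; by contrast the first-order term is harmless because at the doubling point the two test gradients essentially coincide, so (G2) gives a bound vanishing with $\eps$. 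Full details of arguments of this type can be found in \cite[Theorem 2.8]{DK17} and \cite[Appendix A]{D19}.
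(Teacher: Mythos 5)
The paper provides no proof of this proposition; it simply invokes \cite[Theorem 2.8]{DK17}, so the appropriate comparison is whether your self-contained sketch is sound. The affine barrier bound, the $t$-Lipschitz estimate via time-shift comparison, and the reduction of the matter to a uniform-in-$t$ gradient bound in $x$ are all fine. However, the $x$-Lipschitz doubling argument, as you have laid it out, does not close, and the source of the trouble is precisely the first-order term that you declare ``harmless.''

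Track the constants carefully. After applying Crandall--Ishii at the interior maximum and passing $\rho\to0$, $\eps\to0$, $\delta\to0$, the surviving term on the right-hand side is $\beta\bigl(V(\bar x)-V(\bar y)\bigr)\leq \beta\kappa\,|\bar x-\bar y|$. Your only control on $|\bar x-\bar y|$ comes from $L|\bar x-\bar y|\le\omega(|\bar x-\bar y|)$, which yields $|\bar x-\bar y|\le r_L$ with $r_L\to0$ as $L\to\infty$ but at a rate dictated by the (unknown) modulus $\omega$. To contradict $\eta/(T-\bar t)^2\geq\eta/T^2>0$ you therefore need $\beta\kappa\, r_L<\eta/T^2$, i.e.\ $L$ must be chosen larger and larger as $\eta\to0^+$: the argument only produces $L=L(\eta)\to\infty$, hence a modulus of continuity, not a Lipschitz bound. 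There is no order in which one can send $\delta,\eps,\rho,\eta\to0$ that leaves a finite $L$ depending only on $\theta,\beta,\kappa$. And this is not a technical artifact: if you drop the coercivity (G1) with $\gamma>1$, the conclusion is simply false. For instance with $a\equiv0$ and $G$ linear, $G(p)=c_1p$, the solution is $u_\theta(t,x)=\theta(x+c_1t)+\beta\int_0^tV(x+c_1s)\,ds$, so $\partial_xu_\theta=\theta+\beta\int_0^tV'(x+c_1s)\,ds$ can grow like $\beta\kappa t$. So any correct proof must use the superlinear growth $G(p)\geq\alpha_0|p|^\gamma-1/\alpha_0$, $\gamma>1$, in an essential way --- either through a Bernstein-type identity, or through barriers whose Lipschitz constant $L$ is chosen so that $G(\pm L)$ dominates the contributions of $\beta V$ and of the degenerate diffusion, or by a doubling argument in which one uses \emph{one} of the two viscosity inequalities together with coercivity to bound $|p_x|$, rather than subtracting them and losing the large zeroth-order contribution $G(p_x)\approx G(p_y)$. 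Your remark that the second-order degeneracy handled via $\sqrt a\in\Lip$ is ``the main obstacle'' and the first-order term ``harmless'' therefore misidentifies where the heart of the matter lies.

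Apart from this, two smaller points: the bound on $a(\bar x)X-a(\bar y)Y$ that you quote needs $|\bar x-\bar y|$ to stay away from $0$ relative to $\delta$ (otherwise the Hessian of $L\sqrt{|x-y|^2+\delta^2}$ is not $O(\delta)$); and the final constant is claimed to depend on $\omega$, the modulus of $u_\theta$ on all of $\ccyl$, which is not an a priori datum. Both issues disappear once the argument is redone with coercivity doing the real work.
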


\subsection{Stationary viscous Hamilton--Jacobi equation}\label{sez appendix stationary}
Let us consider the equation 
\begin{equation}\label{stationary viscous HJ equation}
-a(x)u''+G(u')+\beta V(x)=\lambda\qquad\hbox{in $I$,}
\end{equation}
where $I$ is an open subset of $\R$ and $\lambda>\beta>0 $. We will be interested in the cases when $I=\R\setminus\{y\}$, $I=(-\infty,y)$, $I=(y,+\infty)$. 

The following comparison principle holds:

\begin{teorema}\label{teo comparison}
Assume conditions (AV) and (G1)--(G4). Let $y,\theta\in\R$, $\lambda>\beta>0 $ and $u\in\D{LSC}(\R),v\in\D{USC}(\R)$ be, respectively, a super- and sub- solution of 
\begin{equation}\label{eq comparison}
-a(x)u''+G(\theta +u')+\beta V(x)=\lambda\qquad\hbox{in $\R\setminus\{y\}$,}
\end{equation}
satisfying 
\[
\limsup_{|x|\to +\infty} \frac{v(x)}{1+|x|}
\leqslant 
0
\leqslant 
\liminf_{|x|\to +\infty}  \frac{u(x)}{1+|x|}.
\]  
The following statements hold:
\begin{enumerate}[(i)]
\item if $\theta > 0$, then \quad $(v-u)(x)\leqslant (v-u)(y)$\quad  for every $x\geqslant y$;\medskip
\item if $\theta < 0$, then \quad $(v-u)(x)\leqslant (v-u)(y)$ \quad for every $x\leqslant y$;\medskip
\item if $\theta = 0$ and $v\in\Lip(\R)$, then \quad $(v-u)(x)\leqslant (v-u)(y)$ \quad for every $x\in \R$.
\end{enumerate}
\end{teorema}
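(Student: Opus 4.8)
The plan is to argue by contradiction via the classical doubling--of--variables technique, the same one behind Proposition~\ref{prop Lip comp} (cf.\ \cite[Proposition 2.3]{DK17}, \cite[Appendix A]{D19}), exploiting two structural features of the equation: $\sqrt a$ is bounded and $\kappa$--Lipschitz, which lets one absorb the (degenerate) second--order terms, and $G(\theta+\cdot)$ is monotone on the range of slopes that matters when $\theta\ne 0$, which is precisely what makes the \emph{one--sided} conclusion correct and allows the estimate to close without assuming Lipschitz regularity. I would carry out (i) in detail; (ii) follows from (i) applied to $\check v(x):=v(-x)$, $\check u(x):=u(-x)$ with $-\theta$ in place of $\theta$, and (iii) runs along the same lines, with the Lipschitz bound on $v$ replacing the monotonicity of $G$ and a penalization acting at both ends of $\R$.

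\emph{Reduction to a strict subsolution.} Given the subsolution $v$ of \eqref{eq comparison}, I would first perturb it into a strict one. The idea is to subtract a small multiple of a smooth, nondecreasing, concave function $\psi$ on $[y,+\infty)$ with $\psi(y)=0$ and $\psi'>0$ (e.g.\ $\psi(x)=\arctan(x-y)$): the concavity $\psi''\le 0$ together with $0\le a\le 1$ kills the unfavourable second--order contribution, while $\psi'>0$, the assumption $\theta>0$, and the monotonicity of $G$ coming from (G3)--(G4)--(G5) yield $G(\theta+v'-\delta\psi')\le G(\theta+v')-c\,\delta$ on the range of slopes that occurs, so that $v_\delta:=v-\delta\psi$ solves \eqref{eq comparison} with $\lambda$ replaced by $\lambda-c\,\delta$ for some $c>0$. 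Since $\psi$ is bounded, $v_\delta$ has the same growth as $v$ and $(v_\delta-u)(y)=(v-u)(y)$; as $\delta\psi\to 0$ pointwise, it suffices to prove $(v_\delta-u)(x)\le (v_\delta-u)(y)$ for all $x\ge y$ and let $\delta\to 0^+$.

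\emph{Doubling and closing.} Assume, for contradiction, that $\sup_{x\ge y}\big((v_\delta-u)(x)-(v_\delta-u)(y)\big)>0$. Since $v_\delta$ grows sublinearly from above and $u$ sublinearly from below, the penalized map $x\mapsto (v_\delta-u)(x)-\eta\sqrt{1+(x-y)^2}$ attains its supremum over $[y,+\infty)$; for $\eta>0$ small this supremum is still positive and, by upper semicontinuity of $v_\delta-u$, is attained at a point bounded away from $y$ and from $+\infty$. Doubling the variables with
\[
\Phi_{\rho,\eta}(x,z):=v_\delta(x)-u(z)-\frac{(x-z)^2}{2\rho}-\eta\sqrt{1+(x-y)^2},\qquad (x,z)\in[y,+\infty)^2,
\]
one obtains a maximizer $(\bar x,\bar z)$ with $\bar x,\bar z$ in the open half--line $(y,+\infty)$ (where the equation holds) and $(\bar x-\bar z)^2/\rho\to 0$ as $\rho\to 0^+$. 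The theorem on sums then produces $X\le Z+O(\eta)$ such that $\big(\bar p+\eta(\bar x-y)/\sqrt{1+(\bar x-y)^2},\,X\big)$ lies in the subjet of $v_\delta$ at $\bar x$ and $(\bar p,Z)$ in the superjet of $u$ at $\bar z$, where $\bar p:=(\bar x-\bar z)/\rho$. Subtracting the strict subsolution inequality from the supersolution one, and using
\[
a(\bar x)X-a(\bar z)Z\le\frac1\rho\big(\sqrt{a(\bar x)}-\sqrt{a(\bar z)}\big)^2+O(\eta)\le\frac{\kappa^2(\bar x-\bar z)^2}{\rho}+O(\eta)
\]
together with $\beta|V(\bar x)-V(\bar z)|\le\beta\kappa|\bar x-\bar z|$, one arrives at
\[
G\Big(\theta+\bar p+\eta\tfrac{\bar x-y}{\sqrt{1+(\bar x-y)^2}}\Big)-G(\theta+\bar p)\le -c\,\delta+o_\rho(1)+O(\eta).
\]
Since $(\bar x-y)/\sqrt{1+(\bar x-y)^2}\ge 0$ on $[y,+\infty)$ and the relevant value $\theta+\bar p$ lies on the nondecreasing branch of $G$, the left--hand side is $\ge 0$; letting $\rho\to 0^+$ and then $\eta\to 0^+$ gives $0\le -c\,\delta<0$, a contradiction. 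This proves $(v_\delta-u)(x)\le (v_\delta-u)(y)$ for $x\ge y$, and the reduction step concludes.

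\emph{Main obstacle.} The delicate point, and the only place where $\theta>0$ is genuinely used (and where $\theta=0$ forces the extra hypothesis $v\in\Lip(\R)$), is the control of the doubling slope $\bar p$: one must show that $\theta+\bar p$ sits on the increasing branch of $G$, in fact bounded away from the minimizer of $G$ uniformly in $\rho,\eta,\delta$, so that both the strict--subsolution construction and the sign of the $G$--increment above are legitimate. This is where the coercivity (G1), the convexity (G4), and the one--sided nature of the domain $[y,+\infty)$ must be combined carefully; the remaining ingredients — the Crandall--Ishii estimates and the degenerate--ellipticity bound via the Lipschitz continuity of $\sqrt a$ — are routine.
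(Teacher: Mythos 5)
There is a genuine gap, and you have put your finger on it yourself in your ``Main obstacle'' paragraph without actually resolving it. Both your strict--subsolution reduction and your closing inequality rely on the claim that the slope in question, $\theta+v'$ (respectively $\theta+\bar p$ after doubling), sits on the nondecreasing branch of $G$. Nothing in the hypotheses gives you this. You only know that $v$ is $\D{USC}$; there is no a~priori Lipschitz bound, so the test--function slopes $\bar p=(\bar x-\bar z)/\rho$ can live anywhere on $\R$, and in particular $\theta+\bar p$ can be negative, where $G$ is \emph{nonincreasing}. On that branch, subtracting $\delta\psi'(x)>0$ from the slope \emph{raises} $G$, so $v-\delta\arctan(x-y)$ need not be a strict subsolution; and the final inequality ``left--hand side $\ge 0$'' reverses its sign. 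Coercivity (G1) does bound $|\bar p|$ as $\rho\to 0^+$, but it does not decide its sign. So, as written, the argument cannot close, and this is not a ``routine'' remaining detail --- it is the whole point of the theorem and precisely what distinguishes $\theta>0$ from $\theta<0$.

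The paper sidesteps this issue entirely, and the mechanism is worth contrasting with yours because it explains why the additive perturbation is the wrong move. After normalizing $y=0$ and $v(0)=u(0)=0$, set $\tilde v=\theta x+v$, $\tilde u=\theta x+u$, and for $\mu\in(0,1)$ take the \emph{multiplicative} perturbation $\tilde v_\mu:=\mu\tilde v=\mu\tilde v+(1-\mu)\cdot 0$. Since $\lambda>\beta$ and $G(0)=0$, the constant $0$ is a strict subsolution, so by convexity of $G$ (the standard argument, cf.\ \cite[Lemma 2.4]{AT}) the convex combination $\tilde v_\mu$ is a strict subsolution with no information whatsoever about where the slopes lie --- convexity of $G$ applies globally, not just on a monotone branch. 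Moreover, the same multiplier produces the one--sided coercivity for free: $\tilde v_\mu-\tilde u=-(1-\mu)\theta x+\mu v-u$, so for $\theta>0$ this tends to $-\infty$ as $x\to+\infty$, hence the open set $\{x>0:\tilde v_\mu>\tilde u\}$ is bounded and a bounded--domain comparison principle for the stationary equation (\cite[Theorem 2.2]{AT}) gives $\tilde v_\mu\le\tilde u$ on $[0,+\infty)$; letting $\mu\nearrow 1$ finishes. Thus the linear part $\theta x$ serves simultaneously as the strictness generator (through the convex combination with $0$) and as the penalization towards $+\infty$, and no gradient localization is needed. In your scheme the $-\eta\sqrt{1+(x-y)^2}$ penalization does a version of the second job, but you have nothing playing the first role once you drop the slope--localization claim. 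The $\theta=0$ case in the paper is handled exactly along the lines you sketch for it --- $v\in\Lip$ plus an $\eps\sqrt{1+x^2}$ penalization at both ends --- so that part of your outline is fine; the problem is isolated to (i) and (ii).
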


\begin{proof}
Without loss of generality, we can assume $y=0$ and $v(0)=u(0)=0$. Let us set $\tilde v(x):=\theta x+v(x)$, $\tilde u(x):=\theta x +u(x)$ and, for 
$\mu\in (0,1)$  $\tilde v_\mu(x):=\mu \tilde v(x)=\mu \tilde v(x)+(1-\mu)0$. Since the 
function $v_0\equiv 0$ is a strict subsolution of \eqref{stationary viscous HJ equation} in $\R$ (due to the fact that $\lambda>\beta>0 $ and $G(0)=0$), by convexity of $G$ we 
infer that $\tilde v_\mu$ is a strict subsolution to \eqref{stationary viscous HJ equation} in $\R\setminus\{0\}$, see \cite[Lemma 2.4]{AT}, i.e. $\tilde v_\mu$ satisfies 
the following inequality in the viscosity sense for some $\delta>0$:
\begin{equation}\label{eq strict}
-a(x)\tilde v_\mu''+G(\tilde v_\mu')+\beta V(x)< \lambda-\delta\qquad\hbox{in $\R\setminus\{0\}$.}
\end{equation}
Now, if $\theta>0$, we have 
\begin{eqnarray*}
\limsup_{x\to +\infty} \frac{\tilde v_\mu (x)-\tilde u(x)}{1+|x|}
&&\leqslant 
\limsup_{x\to+\infty} \frac{-(1-\mu) \theta x +\mu v(x)-u(x)}{1+|x|}\\
&&\leqslant
\lim_{x\to+\infty} \frac{-(1-\mu) \theta x}{1+|x|}
+
\limsup_{x\to+\infty} \frac{\mu v(x)}{1+|x|}
\,-\,
\liminf_{x\to +\infty} \frac{u(x)}{1+|x|}\\
&&\leqslant 
\lim_{x\to +\infty} -(1-\mu)\theta \frac{x}{|x|}
=
-(1-\mu)\theta <0,
\end{eqnarray*}
in particular $(\tilde v_\mu-\tilde u)(x)\to -\infty$ as $x\to +\infty$. This means that the open set $I_\mu:=\{x>0\,\mid\,\tilde v_\mu-\tilde u>0\,\}$ is bounded, so we can apply 
\cite[Theorem 2.2]{AT} to get
\[
\sup_{I_\mu}\left(\tilde v_\mu-\tilde u\right)\leqslant \sup_{\partial I_\mu} \left(\tilde v_\mu-\tilde u\right)=0,
\]
where in the last equality we have also used the fact that $v_\mu(0)-u(0)=0$. From this we infer that 
\[
\tilde v_\mu(x)-\tilde u(x)= (\mu v(x)-u(x))-(1-\mu)\theta x \leqslant 0\quad\hbox{for all $x\geqslant 0$.}
\]
By sending $\mu\nearrow 1$ we get \quad $v(x)-u(x)\leqslant 0=v(0)-u(0)$ \quad for all $x\geqslant 0$, as asserted. 

If $\theta<0$, then, arguing as above, we get
\[
\limsup_{x\to -\infty} \frac{\tilde v_\mu (x)-\tilde u(x)}{1+|x|}
\leqslant
\lim_{x\to -\infty} -(1-\mu)\theta \frac{x}{|x|}
=
(1-\mu)\theta <0,
\]
in particular $(\tilde v_\mu-u)(x)\to -\infty$ as $x\to -\infty$. This means that the open set $I_\mu:=\{x<0\,\mid\,\tilde v_\mu-u>0\,\}$ is bounded. 
By arguing as in the previous case, we conclude that \quad $v(x)-u(x)\leqslant 0=v(0)-u(0)$ \quad for all $x\leqslant 0$. 

If $\theta=0$, then $\tilde v =v$ and $\tilde u=u$. Let us write $v_\mu$ in place of $\tilde v_\mu$ and set $v_{\mu}^\eps(x):=v_\mu(x)-\eps\sqrt{1+x^2}$ for every $x\in\R$. Because of \eqref{eq strict} and the fact that $v_\mu\in\Lip(\R)$, an easy computation shows that for $\eps>0$ small enough $v_\mu^\eps$ is a strict subsolution to \eqref{stationary viscous HJ equation} in $\R\setminus\{0\}$, i.e. satisfies
\eqref{eq strict}. We have 
\begin{eqnarray*}
\limsup_{|x|\to +\infty} \frac{v^\eps_\mu (x)- u(x)}{1+|x|}
&&\leqslant 
\limsup_{|x|\to+\infty} \frac{\mu v(x)-\eps\sqrt{1+x^2}}{1+|x|}
\,-\,
\liminf_{|x|\to +\infty} \frac{u(x)}{1+|x|}
\leqslant 
-\eps<0,
\end{eqnarray*}
in particular $(v^\eps_\mu-u)(x)\to -\infty$ as $|x|\to +\infty$. This means that the open set $I_\mu:=\{x\in\R\,\mid\, v^\eps_\mu-u>0\,\}$ is bounded, so we can apply 
\cite[Theorem 2.2]{AT} and argue as above to infer 
\[
v^\eps_\mu(x)-u(x)= (\mu v(x)-u(x))-\eps\sqrt{1+x^2} \leqslant 0\quad\hbox{for all $x\in\R$.}
\]
By sending $\eps\searrow 0 $ and $\mu \nearrow 1$ we conclude that  $v(x)-u(x)\leqslant 0=v(0)-u(0)$ \quad for all $x\in\R$.
\end{proof}

As a corollary we infer

\begin{cor}\label{cor unique solution}
Let $\theta\in\R\setminus\{0\}$ and $u_1,u_2$ be sublinear solutions of
\[
-a(x)u''+G(\theta +u')+\beta V(x)=\lambda\qquad\hbox{in $\R$,}
\]
where $\lambda>\beta>0 $. 
Then $u_1-u_2$ is constant on $\R$. 
\end{cor}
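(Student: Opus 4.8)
The plan is to read off the statement directly from the comparison principle of Theorem~\ref{teo comparison}, exploiting the fact that a viscosity solution of \eqref{stationary viscous HJ equation} on all of $\R$ is in particular both a sub- and a supersolution of that equation on the punctured line $\R\setminus\{y\}$ for every $y\in\R$ (the sub/supersolution property is local, and $\R\setminus\{y\}$ is open). First I would note that, since $u_1$ and $u_2$ are sublinear, one has $\lim_{|x|\to+\infty} u_i(x)/(1+|x|)=0$ for $i=1,2$; hence the growth hypothesis in Theorem~\ref{teo comparison} is met no matter which of $u_1,u_2$ is taken as the subsolution $v$ and which as the supersolution $u$. Since $\theta\neq 0$, only items (i) and (ii) of that theorem will be used, and these require neither a sign condition nor Lipschitz regularity beyond what we have.

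Set $g:=u_1-u_2$. Suppose first $\theta>0$. Fix an arbitrary $y\in\R$ and apply Theorem~\ref{teo comparison}(i) with $v:=u_1$ and $u:=u_2$, both viewed as sub-, resp.\ super-, solutions of \eqref{eq comparison} on $\R\setminus\{y\}$; this yields $g(x)\leqslant g(y)$ for every $x\geqslant y$. As $y$ is arbitrary, $g$ is nonincreasing on $\R$. Exchanging the roles of $u_1$ and $u_2$, i.e.\ taking $v:=u_2$, $u:=u_1$, the same item gives $-g(x)\leqslant -g(y)$ for $x\geqslant y$, so $g$ is also nondecreasing. Therefore $g$ is constant on $\R$.

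The case $\theta<0$ is handled identically using Theorem~\ref{teo comparison}(ii): for each fixed $y$ one gets $g(x)\leqslant g(y)$ for all $x\leqslant y$, whence $g$ is nondecreasing, and the symmetric choice of $v$ and $u$ shows $g$ is nonincreasing; hence again $u_1-u_2$ is constant. This exhausts all $\theta\in\R\setminus\{0\}$ and completes the argument.

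I do not anticipate a genuine obstacle here: the corollary is essentially a packaging of Theorem~\ref{teo comparison}. The only point deserving a line of care is verifying that the hypotheses of Theorem~\ref{teo comparison} are satisfied in both slot assignments of $v,u$ — namely the sublinear-growth bounds and the fact that the in/equations hold on $\R\setminus\{y\}$ — and that one is free to let $y$ range over all of $\R$ to upgrade the one-sided inequalities into monotonicity of $u_1-u_2$ in both directions.
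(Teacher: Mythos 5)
Your proof is correct and follows essentially the same route as the paper: apply Theorem~\ref{teo comparison} twice, once with each ordering of $u_1,u_2$ as sub-/super-solution, and combine the two one-sided inequalities (letting $y$ range over $\R$) to conclude $u_1-u_2$ is constant.
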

\begin{proof}
To fix ideas, let us assume $\theta>0$. Let us fix $y\in\R$. By applying Theorem \ref{teo comparison}, we get 
\[
(u_1-u_2)(x)\leqslant (u_1-u_2)(y)\qquad\hbox{for all $x\geqslant y$,}
\]
and, symmetrically, 
\[
(u_2-u_1)(x)\leqslant (u_2-u_1)(y)\qquad\hbox{for all $x\geqslant y$.}
\]
We conclude that, for every $y\in \R$, we have 
\[
(u_1-u_2)(x)=(u_1-u_2)(y)\qquad\hbox{for all $x\geqslant y$.}
\]
This readily implies that $u_1-u_2$ is constant on $\R$. The argument in the case $\theta< 0$ is analogous.
\end{proof}
%
%

We also need the following version of the comparison principle. 

\begin{teorema}\label{teo comparison bis}
Assume conditions (AV) and (G1)--(G5).
Let $\lambda>\beta>0 $, $y\in\R$ and $I$ be either $I=(-\infty,y)$ or $I=(y,+\infty)$. 
Let $u\in\Lip(I)$ and $v(x):=\kappa |x-y|$, $\kappa>0$, be, respectively, a super- and sub- solution of 
\begin{equation}\label{eq comparison2}
-a(x)u''+G(u')+\beta V(x)=\lambda\qquad\hbox{in $I$}
\end{equation}
with 
\[
\liminf_{x\in I, |x|\to +\infty}  \frac{u(x)}{1+|x|}\geqslant 0.
\]
Then 
\[
(v-u)(x)\leqslant (v-u)(y)\qquad\hbox{for all $x\in I$.}
\]
\end{teorema}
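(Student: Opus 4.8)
The plan is to mirror the proof of Theorem~\ref{teo comparison}, playing the role of the linear term $\theta x$ there with the subsolution $v$ itself. By the reflection $x\mapsto 2y-x$ (which replaces $G$ by $\check G(p):=G(-p)$ and $a,V$ by their reflections, all of which still satisfy (AV),(G1)--(G5), and leaves $v$ of the same form) it suffices to treat $I=(y,+\infty)$, and after translating we may take $y=0$. Subtracting a constant from $u$ we normalize $u(0^+)=0$ (recall $u\in\Lip(I)$, so it extends continuously to $\overline I$). Since $v(0)=0$, the asserted inequality $(v-u)(x)\leqslant(v-u)(y)$ becomes $v\leqslant u$ on $(0,+\infty)$, where $v(x)=\kappa x$.

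First I would produce strict subsolutions. Because $G(0)=0$ by (G3) and $\lambda>\beta\geqslant\beta V$, the constant function $0$ is a strict subsolution of \eqref{eq comparison2} in $I$; by convexity of $G$ (G4), arguing as in \cite[Lemma~2.4]{AT}, for each $\mu\in(0,1)$ the affine function $v_\mu:=\mu v$ is a (smooth, hence classical) subsolution of \eqref{eq comparison2} satisfying the \emph{strict} inequality $-a v_\mu''+G(v_\mu')+\beta V\leqslant\lambda-(1-\mu)(\lambda-\beta)$ in $I$: indeed $G(\mu\kappa)\leqslant\mu G(\kappa)$ and $G(\kappa)+\beta V\leqslant\lambda$ since $v$ is a subsolution. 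It then suffices to show $v_\mu\leqslant u$ on $(0,+\infty)$ for each fixed $\mu\in(0,1)$ and let $\mu\uparrow1$.

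Fix $\mu$. Applying the comparison principle on bounded domains \cite[Theorem~2.2]{AT} on each interval $(0,R)$ to the subsolution $v_\mu$ and the supersolution $u$ gives $\sup_{(0,R)}(v_\mu-u)\leqslant\max\{(v_\mu-u)(0),(v_\mu-u)(R)\}=\max\{0,\ \mu\kappa R-u(R)\}$. Hence, if $u(R_0)\geqslant\mu\kappa R_0$ for some $R_0$, then $u\geqslant v_\mu$ on $[0,R_0]$; consequently $\{x\geqslant0:u(x)\geqslant\mu\kappa x\}$ is an interval of the form $[0,R^\ast]$, possibly $[0,+\infty)$. If $R^\ast=+\infty$ we are done. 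So assume $R^\ast<+\infty$, so that $u(R^\ast)=\mu\kappa R^\ast$ and $u<v_\mu$ on $(R^\ast,+\infty)$; the goal is to reach a contradiction. On $(R^\ast,+\infty)$ the function $v_\mu-u$ is positive, vanishes at $R^\ast$, and cannot have an interior local maximum: at such a point $x_\ast$ the affine function $v_\mu(\cdot)+\big(u(x_\ast)-v_\mu(x_\ast)\big)$ would touch $u$ from below, and the viscosity supersolution inequality for $u$ would then give $G(\mu\kappa)+\beta V(x_\ast)\geqslant\lambda$, contradicting the strict subsolution inequality above. A positive continuous function on $(R^\ast,+\infty)$ vanishing at $R^\ast$ and having no interior local maximum is nondecreasing, so $u'\leqslant\mu\kappa$ a.e.\ on $(R^\ast,+\infty)$.

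It remains to derive a contradiction from the one-sided bound $u'\leqslant\mu\kappa$ on $(R^\ast,+\infty)$ together with the supersolution property and the sublinear lower bound on $u$; this is the main obstacle. Feeding $u'\leqslant\mu\kappa$ into the viscosity supersolution inequality for $u$ and using $G(\kappa)+\beta V\leqslant\lambda$ together with the monotonicity and coercivity of $G$ from (G1),(G3)--(G5), one shows that whenever $u'$ stays in $[0,\mu\kappa]$ the equation forces strict concavity of $u$ (quantitatively, $-a u''\geqslant(1-\mu)(\lambda-\beta)$ there), so $u'$ is driven below $0$; once $u'<0$, coercivity of $G$ and $V\geqslant0$ keep the supersolution inequality satisfied only with $u'$ uniformly negative, forcing $u$ to decay linearly along a sub-ray of $(R^\ast,+\infty)$ and contradicting $\liminf_{x\to+\infty}u(x)/(1+x)\geqslant0$. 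Hence $R^\ast=+\infty$, i.e.\ $v_\mu\leqslant u$ on $(0,+\infty)$; letting $\mu\uparrow1$ yields $v\leqslant u$ on $I$, which is the assertion. The delicate point throughout is precisely this last step: because $v$ grows linearly, the simple "$v-u\to-\infty$'' device available in the proof of Theorem~\ref{teo comparison} does not apply, and one must use the supersolution property of $u$ to upgrade its sublinear lower growth to growth at least at rate $\kappa$.
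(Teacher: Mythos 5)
Your Steps 1--3 are correct and nicely done: the reduction to $I=(y,+\infty)$ with $y=0$ and $u(0)=0$, the strict subsolutions $v_\mu=\mu v$, the identification of $\{x\geqslant 0: u(x)\geqslant\mu\kappa x\}$ as a closed interval $[0,R^\ast]$ via the bounded-domain comparison \cite[Theorem 2.2]{AT}, and the "no interior local maximum'' touching argument showing that $v_\mu-u$ is nondecreasing on $(R^\ast,+\infty)$, so $u'\leqslant\mu\kappa$ a.e.\ there. The gap is exactly where you flag it, and the sketch offered there does not close it. The claim "the equation forces $-au''\geqslant(1-\mu)(\lambda-\beta)$'' has no content for a merely Lipschitz viscosity supersolution: the supersolution property only yields information at points where a $C^2$ test function touches $u$ from below, and such $u$ need not be twice differentiable anywhere, nor semi-concave. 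Even taken heuristically, the claim collapses when $a$ vanishes (which (AV) allows), and the subsequent chain "$u'$ driven below $0$'' $\Rightarrow$ "$u'$ uniformly negative'' $\Rightarrow$ "linear decay'' is asserted, not proved. Note also that the derivative bound by itself only gives $u(x)\leqslant u(R^\ast)+\mu\kappa(x-R^\ast)$, which is perfectly compatible with $\liminf_{x\to+\infty}u(x)/(1+x)\geqslant 0$; so any version of this argument must genuinely extract additional decay from the supersolution inequality, and that is precisely what is not done.

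The paper circumvents the difficulty by a different device. Lemma~\ref{lemma sublinear approximation} exhibits $v$ as the pointwise supremum of a family of \emph{bounded} strict subsolutions $v_n\leqslant v$ that coincide with $v$ on $I\cap[-n,n]$, obtained by smoothly flattening $\mu v$ at large values while preserving the sign and reducing the magnitude of the derivative (so $G(v_n')\leqslant G(v')$ by the monotonicity coming from (G3)--(G5)) and keeping $v_n''\geqslant -C\kappa^2\eps$. Each $v_n$ is bounded, hence sublinear, so the case $\theta=0$ of the proof of Theorem~\ref{teo comparison} --- perturb by $-\eps\sqrt{1+x^2}$ to make $v_n-u\to-\infty$, apply \cite[Theorem 2.2]{AT} on the resulting bounded positivity set, then send $\eps\to 0^+$ --- applies directly. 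Letting $n\to+\infty$ gives $v\leqslant u+(v-u)(y)$ on $I$. This entirely avoids having to analyze a Lipschitz supersolution on a half-line, which is the step your approach cannot complete.
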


This comparison principle can be easily proved arguing as in the proof of Theorem \ref{teo comparison}--(iii) with the aid of the following lemma.

\begin{lemma}\label{lemma sublinear approximation}
Let $y\in\R$ and let $I$ be either 
$I=(-\infty,y)$ or $I=(y,+\infty)$. Let $v(x):=\kappa |x-y|$, $\kappa>0$, be a subsolution to \eqref{eq comparison2} where $\lambda>\beta>0 $. 
Then $v(x)=\sup_{w\in{\sol}^-(v)} w(x)$, $x\in I$, where we have denoted by ${\sol}^-(v)$ the set of bounded subsolutions $w:I\to\R$ of 
\eqref{eq comparison2} satisfying $w\leqslant v$ in $I$. 
\end{lemma}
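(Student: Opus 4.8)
The plan is to exhibit, in each of the two cases $I=(-\infty,y)$ and $I=(y,+\infty)$, an explicit one‑parameter family of bounded \emph{classical} subsolutions $w_\mu$, $\mu\in(0,1)$, satisfying $w_\mu\leqslant v$ on $I$ and $w_\mu(x)\to v(x)$ pointwise as $\mu\to1^-$. Granting this, for every $x\in I$ we get $\sup_{w\in\sol^-(v)}w(x)\geqslant\sup_{\mu\in(0,1)}w_\mu(x)\geqslant\lim_{\mu\to1^-}w_\mu(x)=v(x)$, while $\sup_{w\in\sol^-(v)}w(x)\leqslant v(x)$ is immediate from the definition of $\sol^-(v)$; this is exactly the assertion. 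By symmetry I would write out only the case $I=(-\infty,y)$, where $v(x)=\kappa(y-x)$ is affine with $v'\equiv-\kappa$, so that the hypothesis ``$v$ is a subsolution of \eqref{eq comparison2}'' simply means $G(-\kappa)\leqslant\lambda-\beta V(x)$ for every $x\in I$.

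The construction I have in mind is an exponential profile whose slope stays strictly below $\kappa$ in modulus. For $\mu\in(0,1)$ set $\nu:=\dfrac{(1-\mu)(\lambda-\beta)}{\mu\kappa}>0$ and $w_\mu(x):=\dfrac{\mu\kappa}{\nu}\bigl(1-e^{\nu(x-y)}\bigr)$, $x\in I$. One checks directly that $w_\mu\in C^2(I)$ with $0<w_\mu<\mu\kappa/\nu$, so $w_\mu$ is bounded; that $w_\mu\leqslant v$, because $\mu\bigl(1-e^{-\nu s}\bigr)\leqslant\nu s$ for $s=y-x>0$, which follows from $1-e^{-t}\leqslant t$ and $\mu<1$; that $w_\mu'(x)=-\mu\kappa\,e^{\nu(x-y)}\in[-\mu\kappa,0)$; and that, since $0\leqslant a\leqslant1$ and $x<y$, one has $-a(x)w_\mu''(x)=a(x)\,\mu\kappa\nu\,e^{\nu(x-y)}\leqslant\mu\kappa\nu=(1-\mu)(\lambda-\beta)$. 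The crux is the subsolution inequality: writing $w_\mu'(x)=-t\kappa$ with $t:=\mu\,e^{\nu(x-y)}\in(0,\mu]$, convexity of $G$ together with $G(0)=0$ gives $G\bigl(w_\mu'(x)\bigr)\leqslant tG(-\kappa)$, and then, using $G(-\kappa)\leqslant\lambda-\beta V(x)$ and $0\leqslant V\leqslant1$,
\[
G\bigl(w_\mu'(x)\bigr)+\beta V(x)\leqslant t\bigl(\lambda-\beta V(x)\bigr)+\beta V(x)=t\lambda+(1-t)\beta V(x)\leqslant\lambda-(1-\mu)(\lambda-\beta).
\]
Adding the bound on $-a(x)w_\mu''(x)$ yields $-a(x)w_\mu''+G(w_\mu')+\beta V\leqslant\lambda$ on $I$, so $w_\mu\in\sol^-(v)$. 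Finally, for fixed $x\in I$ one has $w_\mu(x)=\mu\kappa\,\dfrac{1-e^{-\nu(y-x)}}{\nu}\to\kappa(y-x)=v(x)$ as $\mu\to1^-$, because $\nu\to0^+$. The case $I=(y,+\infty)$ is identical after replacing $v$ by $\kappa(x-y)$ and $w_\mu$ by $\tfrac{\mu\kappa}{\nu}\bigl(1-e^{-\nu(x-y)}\bigr)$.

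The main obstacle — and the reason the naive choice $w=v\wedge M$ does not work — is that a bounded subsolution lying below $v$ must level off at infinity and is therefore forced to be concave there, which, for a genuinely second‑order equation (where $a$ need not vanish), is incompatible with the subsolution inequality unless one retains a uniform reserve in $\lambda-G(w')-\beta V$ to absorb the curvature term $-a\,w''$. Choosing the slope of $w_\mu$ to be $-\mu\kappa\,e^{\nu(x-y)}$, strictly bounded away from $-\kappa$, is precisely what produces the reserve $(1-\mu)(\lambda-\beta)$; matching the curvature bound $|w_\mu''|\leqslant\mu\kappa\nu$ to this reserve fixes the value of $\nu$, and letting $\mu\to1^-$ (hence $\nu\to0^+$) recovers $v$. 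I expect this balancing of curvature against slack to be the only genuine difficulty; the remaining verifications are routine.
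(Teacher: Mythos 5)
Your proof is correct, and it takes a genuinely different route from the paper's. The paper proceeds in two stages: it first contracts $v$ to $\mu v$ to create a uniform slack $\delta=(1-\mu)(\lambda-\beta)$ (exactly your ``reserve''), invoking convexity and $G(0)=0$ via \cite[Lemma 2.4]{AT}, and then composes $v$ with an explicit $C^2$ mollifier-type cutoff $\varphi_{\eps,n}$ whose second derivative is bounded below by $-\eps C$; the subsolution property is preserved because $|\varphi_{\eps,n}'|\le 1$ keeps the slope inside the monotonicity intervals of $G$, and the cutoff equals the identity on $[0,\kappa n]$, so that the supremum over $n$ recovers $v$ on $I$. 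You instead produce a single closed-form exponential profile $w_\mu$ whose slope is strictly inside $(-\kappa,0)$ (resp.\ $(0,\kappa)$), using convexity and $G(0)=0$ in the identical way to create the reserve, and then choosing the decay rate $\nu$ precisely so that the curvature term $-a\,w_\mu''\le \mu\kappa\nu$ matches that reserve. What the paper's approach buys is localization ($v_{\eps,n}=v$ on $I\cap[-n,n]$, which makes the final supremum trivial and is robust to replacing $v$ by more general subsolutions), at the price of the cutoff bookkeeping; what your approach buys is an entirely explicit, mollifier-free, one-parameter family and a shorter verification, at the mild price of a $\nu\to 0^+$ pointwise limit at the end. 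Both correctly identify the key obstruction — a bounded subsolution below $v$ must bend downward somewhere, and the only way to pay for the resulting curvature term is to sacrifice a fraction $(1-\mu)$ of the margin $\lambda-\beta$; your closing remark about why $v\wedge M$ fails captures this cleanly. One cosmetic remark: your chain of inequalities uses $V\le 1$ and $\lambda>\beta$ but not $G\ge 0$; the statement's standing hypothesis (G5) is therefore not actually needed for your construction, which is a small simplification over the paper's proof (where monotonicity of $G$ on the half-lines, a consequence of (G3)–(G5), is invoked to compare $G(v_{\eps,n}')$ with $G(v')$).
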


\begin{proof}
Without any loss of generality, we can assume $y=0$. 
For every $\mu\in [0,1)$ we set $v_\mu(x):=\mu v(x)$. Since the 
function $v_0\equiv 0$ is a strict subsolution of \eqref{eq comparison2} in $I$ (due to the fact that $\lambda>\beta>0 $ and $G(0)=0$), by convexity of $G$ we 
infer that $v_\mu=\mu v+(1-\mu)v_0$ is a strict subsolution to \eqref{eq comparison2} in $I$, i.e. $v_\mu$ satisfies 
the following inequality in the viscosity sense for some $\delta>0$:
\begin{equation}\label{eq strict2}
- a(x)v_\mu''+G(v_\mu')+\beta V(x) =G(v_\mu')+\beta V(x)< \lambda-\delta\qquad\hbox{in $I$.}
\end{equation}
Since $v(x)=\sup_{\mu \in (0,1)} v_\mu(x)$, we infer that it is suffices to prove the assertion by additionally assuming that $v(x)=\kappa |x|$ satisfies 
\eqref{eq strict2} for some $\delta>0$. For fixed $n\in\N$ and $\eps>0$, we define $\varphi_{\eps, n}(u):=\int_0^u g_{\eps, n}(t)\, dt$\ for all $u\geqslant 0$, where 
\begin{eqnarray*}
g_{\eps, n}(t):=
\begin{cases}
1 & \hbox{if $0\leqslant t\leqslant \kappa n$}\smallskip\\
f(\eps(t-n)) & \hbox{if $t>\kappa n$,}
\end{cases}
\end{eqnarray*}
with $f(s):=\e^{-s^2}$. Let $C>0$ be such that $-C\leqslant f'(s) <0$ for every $s\geqslant 0$. An easy check shows that $\varphi_{\eps, n}$ is bounded and of class $\D{C}^2$ 
on $[0,+\infty)$,   
$0<\varphi_{\eps, n}'\leqslant 1$ and $\varphi_{\eps, n}''\geqslant -\eps C$ in $[0,+\infty)$. 
Let us set $v_{\eps, n}(x):=\varphi_{\eps, n}(v(x))$. Then 
\begin{eqnarray}\label{eq properties v_n}
v_{\eps, n}\leqslant v\quad\hbox{in $I$},\ \  v_n=v\ \ \hbox{in $I\cap [-n,n]$,}\ \ 
v_{\eps, n}''(x)=\kappa^2 \varphi_{\eps, n}''(v(x))\geqslant -C\kappa^2\eps\ \ \hbox{in $I$}.
\end{eqnarray}
Also notice that $v_{\eps, n}'(x)=\varphi_{\eps, n}'(v(x))v'(x)$ and $v'(x)$ have the same sign (either positive or negative) and 
$|v_{\eps, n}'| \leqslant |v'|$ in $I$. Since $G$ is non--increasing in $(-\infty, 0]$ and non--decreasing on $[0,+\infty)$, we infer that $G(v'_{\eps, n}(x))\leqslant G(v'(x))$ for every 
$x\in I$. So 
\[
-a(x)v_{\eps, n}''+G(v_{\eps, n}')+\beta V(x) 
\leqslant
C\kappa^2\eps +G(v')+\beta V(x) 
\leqslant
\lambda-\delta+C\kappa^2\eps\qquad\hbox{in $I$,}
\]
hence by choosing $\eps<\delta/(C\kappa^2)$ we get that $v_n:=v_{\eps, n}\in \sol^-(v)$. By taking into account 
\eqref{eq properties v_n}, we conclude that $v(x)=\sup_{n\in\N} v_n(x)$, which clearly implies the assertion.          
\end{proof}

 {\section{Hills and valleys}\label{hv}

   {In this section we discuss the relationship between the
    original hill and valley condition of \cite{KYZ20}} (see
  ($\Lambda$V) below) and its scaled version (V2), give examples of
  potentials which satisfy conditions \eqref{intro 01}, (V1) and
  ($\Lambda$V) and argue that potentials satisfying ($\Lambda$V) can
  be thought of as ``typical'' in the general stationary ergodic
  setting.

  \subsection{Comparison of ($\Lambda$V) and (V2)}\label{oss weak valley-and-hill}
  
   {In \cite{KYZ20} the authors considered the case $a\equiv 1/2$
    and imposed the following hill and valley condition on $V$:
  \begin{itemize}
  \item [($\Lambda$V)] For all $h\in(0,1)$ and $y>0$,
    \begin{itemize}
    \item [($\wedge$)]
      $\PP\left(\forall x\in[-y,y],\ V(x,\omega)\ge h\right)>0$;
    \item [($\vee$)] $\PP\left( \forall x\in[-y,y],\ V(x,\omega)\le h\right)>0$.
    \end{itemize}
    \end{itemize}
    First of all we note that ($\wedge$) can be
    stated equivalently as follows.
\begin{itemize}
\item [($\wedge'$)] for all $h\in(0,1)$ and $y>0$ there is a set
  $\Omega(h,y)$ of probability 1 such that for each
  $\omega\in\Omega(h,y)$ there is an $\ell\in\R$ such
  that $V(x,\omega)\ge h$ for all $x\in[\ell,\ell+2y]$.
\end{itemize}
Indeed, if we let
$A(h,y)=\bigcup_{\ell\in\R}\{\forall x\in[\ell,\ell+2y],\
V(x,\omega)\ge h\}$ (``there is an $h$-hill of length $2y$'') then
($\wedge$) implies that $\PP(A(h,y))>0$. But the event $A(h,y)$ is
invariant under translations. Therefore, by the ergodicity assumption
its probability is equal to 1, and ($\wedge'$) follows. Conversely,
note that
\begin{align*}
  (\wedge') \ \ \Longleftrightarrow\ \
  &\forall h\in(0,1),\ \forall y>0,\ \PP\left(A(h,y)\right)=1\\ \Longleftrightarrow\ \
  & \forall h\in(0,1),\ \forall y>0,\ \PP\left(\bigcup_{\ell\in\Z}
    \{\forall
    x\in[\ell,\ell+2y],\ V(x,\omega)\ge h\}\right)=1.
\end{align*}
Since
  \[\PP\left(\bigcup_{\ell\in\Z}\{\forall x\in[\ell,\ell+2y],\
      V(x,\omega)\ge h\}\right)\le \sum_{\ell\in\Z}\PP\left(\forall
      x\in[\ell,\ell+2y],\ V(x,\omega)\ge h\right)\]
and, by stationarity, 
  \[\forall\ell\in\R,\quad \PP\left(\forall x\in[\ell,\ell+2y],\
      V(x,\omega)\ge h\right)=\PP\left(\forall x\in[-y,y],\
      V(x,\omega)\ge h\right),\] we conclude that ($\wedge'$) implies
  ($\wedge$). The valley condition ($\vee$) admits a similar
  equivalent formulation.

  Now it is easily seen that in the uniformly elliptic case, i.e.\
  when $a(x,\omega)\ge a_0>0$ for all $x\in\R$ and $\omega\in\Omega$,
  ($\Lambda$V) is equivalent to (V2). We also point out that if
  $a(x_0,\omega)=0$ for some $x_0\in\R$, then (A) implies that
  $\int_I 1/a(x,\omega)\,dx=+\infty$ for every interval
  $I$ containing $x_0$. In particular, if $a\equiv 0$ and $V$ is
  continuous then (V2) reduces to \eqref{intro 01} and, thus, can be
  dropped altogether. }

\subsection{Examples} We start with probably the simplest class of
examples. They are based on stationary renewal processes and
i.i.d.\ sequences.
\begin{examp}\label{ex1}
  Let $(\Omega,{\cal F},\PP)$ be a probability space and $F$ be a
  distribution function on $\R$ with $F(0)=0$ and a finite mean
  $m:=\int_0^{+\infty} (1-F(t))\,dt$. Let
  $(X_k)_{k\in\Z\setminus\{-1,0\}}$ be a sequence of i.i.d.\ random
  variables with a common distribution $F$ and choose $(X_{-1},X_0)$
  to be independent from this sequence and
  distributed as follows: 
  \[\PP(X_{-1}\ge x,\,X_0\ge y)=\frac{1}{m}\int_{x+y}^{+\infty}
    (1-F(t))\,dt\quad\text{for all }x,y\ge 0.\] Define $S_0=X_0$,
  $S_n=\sum_{k=0}^nX_k$, $S_{-n}=-\sum_{k=-1}^{-n}X_k$ for all
  $n\in\N$. The process $(S_k)_{k\in\Z}$ is a stationary renewal
  process on $\R$ (see \cite[Theorem 9.1, Chapter
  9]{KT75})\footnote{If $F(x)=\mathbbm{1}_{[1,+\infty)}(x)$ then we get
    $S_k=X_0+k$, $k\in\Z$, where $X_0$ is uniform on $[0,1]$ and
    $X_{-1}=1-X_0$.}. Next we take an i.i.d.\ sequence $(\xi_k)_{k\in\Z}\in\{0,1\}^\Z$ of Bernoulli
  random variables with parameter $p\in(0,1)$ which is independent of
  $(X_k)_{k\in\Z}$, and define
  \[V(S_k,\omega)=\xi_k,\ \ 
      V(x,\omega)=\xi_k+(x-S_k)\frac{\xi_{k+1}-\xi_k}{S_{k+1}-S_k},\quad\text{for
      }S_k<x<S_{k+1},\ k\in\Z.
    \] 
In words, we let
  $V(S_k,\omega)=\xi_k$ and then linearly interpolate between the
  neighboring points. This stationary ergodic potential satisfies
  ($\Lambda$V) since for every $y>0$ there is an integer $n\ge 0$ such that
  \begin{enumerate}[(i)]
  \item the probability that $[-y,y]$ contains exactly $n$ renewal
    points is strictly positive and
  \item  $\PP(\xi_0=\xi_1=\dots=\xi_{n+1}=j)>0$ for $j\in\{0,1\}$ (we require
    require that at all $n$ points of the renewal process as well as
    at the points immediately preceeding and following these $n$
    points the potential takes the same value $j$).
  \end{enumerate}
 If, in addition, we assume that $F(x)=0$ for all
  $x<1/\kappa$ then $V$ also satisfies (V1).

  Both properties (V1) and ($\Lambda$V) are preserved if we replace Bernoulli
  distribution for $\xi_0$ with any probability distribution on
  $[0,1]$ as long as $\PP(\xi_0\in[0,h])>0$ and
  $\PP(\xi_0\in[1-h,1])>0$ for all $h\in(0,1/2)$. 
\end{examp}

\begin{oss}\label{reg}
  We also note that if we have a stationary ergodic potential $V_0$
  with values in $[0,1]$ which satisfies ($\Lambda$V) but not (V1), then we
  can take a $\kappa$-Lipschitz mollifier $f$ supported in $[0,1]$ and
  set $V(x,\omega)=\int_{\R}V_0(y,\omega)f(y-x)\,dx$. The resulting
  process $V$ will be stationary ergodic and will satisfy both (V1)
  and ($\Lambda$V).
\end{oss}

\begin{examp}\label{ex2}
  Let $(\Omega,{\cal F},\PP)$ be a probability space, $U$ be a uniform
  random variable on $[0,1]$ and $B_{\pm}=(B_\pm(t,\omega))_{t\ge 0}$
  be independent standard Brownian motions which are independent from
  $U$. Define a two-sided Brownian motion $B=(B(x,\omega))_{x\in\R}$
  starting from $U$ by \[B(x,\omega)=U(\omega)+
    \begin{cases}
      B_+(x,\omega),&\text{if }x\ge 0;\\B_-(-x,\omega),&\text{if }x<0.
    \end{cases}
  \] 
  We define $V(x,\omega)$ to be a
  Brownian motion with two-sided reflection, at $0$ and at $1$. The
  following informal description gives a pathwise construction of $V$
  from $B$ which avoids unsightly formulas. Imagine a fully
  transparent plane $\R^2$ and draw on it horizontal lines $y=k$ for
  all $k\in\Z$. Given a path of $B$, draw its graph
  $(x,B(x,\omega))\subset \R^2$, fold $\R^2$ as if it were a sheet of
  paper along the line $y=0$, then fold back along $y=1$ and $y=-1$,
  and continue folding back and forth until every line $y=k$,
  $k\in\Z$, of the original plane has been used. The resulting curve
  contained in $\R\times [0,1]$ is the graph of $V(x,\omega)$,
  $x\in\R$.

  The stationary ergodic process $V$ satisfies ($\Lambda$V), since (see, for
  example, \cite[Theorem (6.6), p.\,60]{Bass}) for every $h,R>0$
  \[\PP\left(\max_{x\in[-R,R]}
    |B(x,\omega)-1|<h\right)>0\quad\text{and}\quad\PP\left(\max_{x\in[-R,R]}
    |B(x,\omega)|<h\right)>0.\]

The paths $V(x,\omega)$, $x\in\R$, are almost surely not Lipschitz,
but, as we pointed out in Remark~\ref{reg}, a mollification will fix
this problem while preserving all other relevant properties.

  In this example Brownian motion with two-sided reflection can be
  replaced by a more general L\'evy process with two-sided
  reflection\footnote{A rigorous construction and properties of L\'evy
    processes with two-sided reflection can be found in
    \cite{AAGP}.} under
  suitable conditions on the support of its L\'evy measure.
\end{examp}
A very different class of examples was given in \cite[Example
1.3]{KYZ20}. Stochastic processes in this class have finite range of
dependence and, thus, are mixing with any rate. An example which is
not mixing can be constructed in the same way as in \cite[Example
1.3]{YZ19} by using points of a renewal process in Example~\ref{ex1}
instead of points of $\Z$.

\subsection{Discussion} Suppose that $V$ is stationary ergodic and
satisfies (V1) and \eqref{intro 01}. When does such process satisfy
($\Lambda$V)?  On $\Z$, an argument which shows that a very broad and natural
class of stationary ergodic processes satisfies ($\Lambda$V) was already put
forward in \cite[Example 1.2]{YZ19}. We state the following simple
sufficient condition for processes on $\R$ which can be checked in
many cases. Let $(\Omega,({\cal F}(x))_{x\in\R},{\cal F},\PP)$ be a
probability space,
${\cal F}(x_1)\subseteq {\cal F}(x_2)\subseteq{\cal F}$ for all
$x_1<x_2$ be a filtration, and $V$ be a stationary ergodic process
which satisfies (V1) and \eqref{intro 01} and is adapted to this
filtration. Observe that if for each $y>0$, $h\in(0,1/2)$ there is an
$n\in\N$ and $0<x_1<x_2<\dots< x_n<x_0+2y\le x_{n+1}$ such that
$\max_{1\le i\le n}|x_{i+1}-x_i|<1/(2\kappa)$,
  \begin{multline}
    \PP(|V(x_i,\omega)|<h/2,\ \forall
    i\in\{1,\dots,n\}\,|\,{\cal F}(0))>0\quad \text{and}\\
      \PP(|1-V(x_i,\omega)|<h/2,\ \forall
      i\in\{1,\dots,n\}\,|\,{\cal F}(0))>0,\label{md}
  \end{multline}
  then ($\Lambda$V) holds. All that \eqref{md} is asking for is that given the
  past of the process up to some fixed time (which by stationarity can
  be taken to be $0$), the probability that
  it finds itself ``on a hill'' (resp.\ ``in a valley'') at some
  future time $x_1>0$ and then happens to be there also at
  sufficiently many times $x_2,\dots,x_n$ within $1/(2\kappa)$ of each
  other is not equal to zero. Many natural stochastic processes will
  satisfy \eqref{md}. We remark that for Examples~\ref{ex1} and \ref{ex2}
  the property \eqref{md} holds.

  In conclusion, we mention an example of a process $V$ based on an
  i.i.d.\ sequence, satisfying (V1) and \eqref{intro 01} for which
  ($\Lambda$V) fails. It is discussed in detail in \cite{Sil20} where the author
  shows, in particular, that relative to the probability measure
  on $C(\R)$ corresponding to this process the sets of periodic and
  almost periodic functions are negligible. For reader's convenience
  we reproduce a version of this example here.
\begin{examp}\label{ex3}
  Without loss of generality we assume that $\kappa\ge 1$ and let
  $\varphi(x)$ be a $2\kappa$-Lipschitz\footnote{Function $\varphi$
    can have any Lipschitz constant as long as it is at least $2$. This will
    lead to an obvious adjustment of other parameters. Our choice
    gives the simplest formulas.} function such that (a)
  $0\le \varphi(x)\le 1$ for all $x\in \R$, (b)
  $\text{supp}\,\varphi(x)\subseteq [-1/2,1/2]$, and (c)
  $\varphi(x)=1$ iff $x=0$. In the setting of Example~\ref{ex1} take
  $F(x)=\mathbbm{1}_{[1,+\infty)}(x)$ so that $S_{k+1}-S_{k}=1$ with
  probability 1 and let $\zeta_k=2\xi_k-1\in\{-1,1\}$.  Define
  \begin{equation*}
    V(x,\omega)=\frac12\left(1+\sum_{k\in\Z}\zeta_k
    \varphi\left(x-S_k\right)\right).
  \end{equation*}
  The process $V$ clearly satisfies (V1) and \eqref{intro 01} but not
  ($\Lambda$V). On the other hand, no matter which $\varphi(x)$ as above we
  fix, it is enough to take any $F(x)$ such that $F(x)=0$ for all $x<1$,
  $1-F(x)>0$ for all $x>0$ and set
  \begin{multline*}
   V(x,\omega)=\\\frac12\left(1+\sum_{k\in\Z}\zeta_k
    \left(\varphi\left(\frac{x-S_k}{S_{k+1}-S_k}\right)\mathbbm{1}_{[0,+\infty)}(x-S_k)+\varphi\left(\frac{x-S_k}{S_k-S_{k-1}}\right)\mathbbm{1}_{(-\infty,0)}(x-S_k)\right)\right) 
  \end{multline*}
  to get the process which satisfies (V1) and ($\Lambda$V)\footnote{($\Lambda$V) is
    satisfied because $\varphi$ is continuous and
    $P(S_k-S_{k-1}>\ell, S_{k+1}-S_k>\ell)=(1-F(\ell))^2>0$ for all $k\in\Z\setminus\{-1,0\}$ no matter
    how large $\ell>1$ is.}. 
  \end{examp}
  Example~\ref{ex3} as well as the sufficient condition \eqref{md}
  indicate that essentially the only obstacle for the validity of ($\Lambda$V)
  is the ``rigidity'' of trajectories of $V$
  which is atypical for many classes of stationary ergodic processes,
  and even this ``rigidity'' can sometimes be rectified by making the
  ingredients ``more random''.
  }

\bibliography{viscousHJ}
\bibliographystyle{siam}
\end{document}